\journalname{BIT}
\date{\today}
\numberwithin{equation}{section}
\numberwithin{figure}{section}
\numberwithin{table}{section}
\newcommand{\C}{\mathbb C}% complex numbers
\newcommand{\R}{\mathbb R}
\newcommand{\divdif}[2]{#2[#1]}
\DeclareMathOperator{\vecspan}{span}
\newcommand{\rr}{{\rm r}}
\newcommand{\ttol}{{\rm tol}}
\newcommand\tjnew[1]{{#1}}
\newcommand\phif\varphi
\newcommand{\dd}{\mathrm{d}}
\newcommand{\ee}{{\rm e}}
\newcommand{\ii}{{\rm i}}
\newcommand{\Kry}{\mathcal{K}}
\newcommand\Era{\mathrm{Err}_a}
\newcommand\Erone{\mathrm{Err}_1}
\newcommand\Er{\mathrm{Err}}
\newcommand\Vmj{V_m^{[j]}}
\newcommand\Tmj{T_m^{[j]}}
\newcommand\Smj{S_m^{[j]}}
\newcommand\Lmj{L_m^{[j]}}
\newcommand\mw[1]{w^{[#1]}}
\newcommand\mv[1]{v^{[#1]}}
\newcommand\mvmone[1]{v^{[#1]}_{m+1}}
\newcommand\nsit{n_{\text{sites}}}
\newcommand\Tcor{{T}^+}
\newcommand\Vcor{{V}^+}
\newcommand\Scor{{S}^+}
\newcommand\sca{\sigma}
\newcommand\Opgen{A}
\begin{document}

\sloppy
\title{Computable upper error bounds
       for Krylov approximations to matrix exponentials
       and associated $ \phif $-functions}
% of skew-Hermitian matrices}
\titlerunning{Computable upper error bounds
       for Krylov approximations}
\author{Tobias Jawecki \and Winfried Auzinger \and Othmar Koch}
\authorrunning{T.\ Jawecki et\ al.}
\institute{
Tobias Jawecki \and Winfried~Auzinger \at
Institut f{\"u}r Analysis und Scientific Computing, Technische Universit{\"a}t Wien \\
Wiedner Hauptstrasse 8--10/E101, A-1040 Wien, Austria \\
\email{tobias.jawecki@tuwien.ac.at, w.auzinger@tuwien.ac.at}
\and
Othmar~Koch \at
Fakult{\"a}t f{\"u}r Mathematik, Universit{\"a}t Wien \\
Oskar-Morgenstern-Platz 1, A-1090 Wien, Austria \\
\email{othmar@othmar-koch.org}
}

\maketitle

\begin{abstract}
An a posteriori estimate for the error of a standard Krylov approximation
to the \tjnew{matrix exponential} is derived.
\tjnew{The estimate is based on the defect (residual) of the Krylov approximation and
is proven to constitute a rigorous upper bound on the error,
in contrast to existing asymptotical approximations.}
It can be computed economically in the underlying Krylov space.
In view of time-stepping applications, assuming that
the given matrix is scaled by a time step, it is shown that
the bound is asymptotically correct (with an order related to the
dimension of the Krylov space) for the time step tending to zero.
\tjnew{This means that the deviation of the error estimate from the true error
tends to zero faster than the error itself. }
Furthermore, this result is extended to
Krylov approximations of $ \phif $-functions
and to improved versions of such approximations.
%\tjnew{Most of our results apply to general matrices, but our main interest concerning practical applications is in skew-Hermitian matrices which appear in the numerical treatment
%of Schr{\"o}dinger equations.}
The accuracy of the derived bounds is demonstrated by examples
and compared with different variants known from the literature,
which are also investigated more closely.
Alternative error bounds are tested on examples, in particular
a version based on the concept of effective order.
For the case where the matrix exponential is used
in time integration algorithms, a step size selection
strategy is proposed and illustrated by experiments. 
\keywords{
matrix exponential
\and Krylov approximation
\and a~posteriori error estimation
\and upper bound
%\and Schr{\"o}dinger equation
}
\subclass{15A16, 65F15, 65F60}
\end{abstract}

%\x{Keine Kasterl am Ende von Beweisen?}

\newcommand\wa[1]{#1}%\textcolor{blue}{#1}}
\newcommand\tj[1]{\textcolor{red}{#1}}
\newcommand\ok[1]{\textcolor{magenta}{#1}}

\section{Introduction}

We consider Krylov approximations to the matrix exponential function
for the purpose of the solution of a linear, homogeneous system
of differential equations
\begin{equation*}
\psi'(t)=M\psi(t), ~~  \psi(0)=\psi_0,
\qquad \psi(t) = \ee^{tM} \psi_0.
\end{equation*}
\tjnew{The complex matrix $M$ commonly results from the discretization of a partial differential equation.}
%Our main focus is on equations of Schr{\"o}dinger type,
%where $M=-\ii\,H$ with \tjnew{a Hermitian} matrix $H$,
%which commonly arise after spatial discretization of linear Schr{\"o}dinger
%equations.
%\tjnew{Our analysis also covers the case of Hermitian matrices
%and the dissipative case is also investigated by numerical experiments.
%However we do not include non-normal matrices in our considerations.
%Still, results that do not require special assumptions on the system matrix
%are formulated in fully general terms.}
In this work we present new results for precise a~posteriori error estimation,
which also extend to the evaluation of so-called $ \phif $-functions.
The application of these estimates for the purpose of time propagation
is also discussed and illustrated.
\tjnew{Theoretical results are verified by numerical experiments, which are classified into
Hermitian (dissipative), skew-Hermitian (Schr{\"o}dinger-type) and general non-normal problems.}

\paragraph{Overview on existing approaches and results.}
The approximate evaluation of large matrix exponential functions
is a topic which has been extensively treated in the numerical
analysis literature, for basic reference see e.g.~\cite{golloa89,molloa03}.
A standard approach is to project the given
matrix~$ M $ to a low-dimensional Krylov space via \tjnew{Arnoldi or Lanczos iteration},
%constructed from
%increasing powers of~$M$ realized by a short recursion in the
%\text{(anti-)Hermitian} Lanczos method~\cite{saad03},
and to directly exponentiate the projected small matrix.
A first mention of the Lanczos approach
can be found in~\cite{parlig86}, where it is also
recognized that for the method
to perform satisfactorily, the time-steps have to be controlled.
However, the control mechanism from~\cite{parlig86} is not
very elaborate and is based on a series expansion of the error, which
is only valid in the asymptotic regime, see for instance~\cite{niewri12}.
For discretizations of parabolic problems, \cite{galsaa92} uses an error estimator to choose
the step-size, this approach is improved in~\cite{steley96} and has been
generalized in~\cite{mohcar10}. Notably, in the latter reference a strict error bound is
used to estimate the time-step instead of asymptotic techniques. 
%It is furthermore observed that
%in contrast, strategies based on Taylor expansion tend to overestimate the error.
\tjnew{It is argued in~\cite{mohcar10} that the
strategy from~\cite{mohcar10} performs better than~\cite{mohauer06} and
better in turn than~\cite{parlig86}.}

A first systematic study of Krylov-based methods
for the matrix exponential function was
given in~\cite{saad92s}. The error
is analyzed theoretically, yielding both a priori and
computable a posteriori estimates. The analysis there relies on
approximation theory and yields a priori error bounds which are
asymptotically optimal in the dimension of the Krylov subspace in important situations. The analysis moreover
implies correction schemes to lift the convergence order which are cheap to compute based on
the already available information. The error expansion also
suggests a posteriori error estimators resorting to the
leading error term.
%, which again involve the exponential of a small matrix.
This approach relies on the assumption of the sufficiently rapid decay of the
\tjnew{series representation of the error}. A recent generalization of this work together
with a more rigorous justification is given in~\cite{chinesen17}.
\tjnew{For early studies of a~priori error estimates see also \cite{DK89,DK95}.}

A thorough theoretical analysis of the error of Krylov methods
for the exponential of \tjnew{a Hermitian} or skew- (anti-) Hermitian matrix
was given in~\cite{hoclub97}.
The analysis derives an asymptotic error expansion and
shows superlinear error decay in the dimension~$ m $ of
the approximation subspace for sufficiently large~$ m$.
\tjnew{These results are further improved in~\cite{becrei09}.}
%which in the skew-Hermitian case that we
%are confronted with holds only for sufficiently large subspaces,
In~\cite{hoclub97}, a~posteriori error estimation is also discussed.
This topic is furthermore addressed in~\cite{lubich08}.
There, the Krylov approximation method is interpreted as a Galerkin method, whence an
error bound can be obtained from an error representation for this variational approximation.
This yields a computable estimate
via a quadrature approximation
of the error integral involving the defect of the numerical
approximation. The a~priori error analysis reveals
a step-size restriction for the convergence of the method,
which is less stringent when the subspace dimension is larger.

Further work in the direction of controlling the Lanczos process through
information gained from the defect is given in~\cite{botchevetal13}.
The defect is a scalar %(time-dependent)
multiple of the successive Krylov vector
arising in the iteration and can be evaluated efficiently.
If the error is approximated by a Galerkin approach, the resulting estimator
corresponds to the difference of two Lanczos iterates. For the
purpose of practical error estimation, in~\cite{botchevetal13} it is seen as preferable to
continue the original Krylov process.
\tjnew{Some other defect-based upper bounds for the error
of the matrix exponential are given in~\cite{chinesen17}, including a closer analysis of the error estimate of~\cite{saad92s}.
These results still require some a~priori information on the matrix spectrum.
}

Various improved methods for computing the matrix
exponential function are given in the literature, for example restarted methods, deflated restarting methods
or quadrature based restarting methods, see~\cite{AEEG08a},~\cite{EEG11}, and~\cite{FGS14a}.

%\tjnew{Rational Krylov subspaces are widely used for eigenvalue problems, see also~\cite{Ru84}.}
It has also been advocated in~\cite{eshhoc06} to use preconditioning in the
Lanczos method by a shifted inverse in order to get a good approximation
of the leading invariant subspaces.
\tjnew{The shift-and-invert approach 
(a specific choice to construct a rational Krylov subspace)
for the matrix exponential function was
introduced earlier in~\cite{mornov04}.}
However, the choice of the shift is
critical for the success of this procedure.
 This strategy amounts to a
transformation of the spectrum which grants a convergence speed which
is independent of the norm of the given matrix.
In~\cite{eshhoc06}, a~posteriori
error estimation based on the asymptotical expansion of the error
is advocated as well.
We note that our results do not immediately carry over to the shift-and-invert approach, see Remark~\ref{SIkrylov}.

\paragraph{Overview on present work.}

In Section~\ref{sec:problem} we introduce the Krylov approximation
and the integral representation of the approximation error in terms of its defect.
\tjnew{In Section~\ref{sec:theorypart}
we derive a new computable upper bound for the error
by using data available from the Krylov process
with negligible additional computational effort (Theorem~\ref{theorem:upperboundexp}).}
This upper bound is cheap to evaluate and update on the fly
during the Lanczos iteration.
It is also asymptotically correct,
i.e., for $ t \to 0 $ the error of the error estimator tends
to zero faster asymptotically than the error itself. % (Proposition~\ref{prop:prop4}).
In Section~\ref{sec:phi} these results are extended to the case
where the Krylov approach is employed to approximate
the $\phif$-functions of matrices (generalizing
the exponential function), see Theorem~\ref{theorem:upperboundphi}.
In Section~\ref{sec:saadest},
improved approximations derived from \tjnew{a corrected}
Krylov process~\cite{saad92s} are discussed, and corresponding
error estimators are analyzed,
including an asymptotically correct \tjnew{true} upper bound on the
error (Theorem~\ref{theorem:asymerrorpricorrected}).
This approach can be used to increase the order, but it has the drawback
of violating mass conservation.
In Proposition~\ref{upperboundphi} error estimates are particularized
to the Hermitian case.
Another view on defect-based error estimation is presented
in Section~\ref{sec:defectquad}.

Section~\ref{sec:timeint} is devoted to practical application of the
various error estimators for the control of the time steps $ t $
including smaller substeps~$ \Delta t $ if it appears indicated.
In Section~\ref{sec:num} we present
numerical results \tjnew{for a finite difference discretization of
the free Schr{\"o}dinger equation, a Hubbard model of solar cells,
the heat equation,
and a convection-diffusion problem,}
illustrating our theoretical results. Additional practical
aspects are also investigated:
A priori estimates and the role of restarting are discussed in particular
in the context of practical step-size adaptation.
Finally, we demonstrate the computational efficiency of our adaptive strategy.

%\x{Advantage of our error estimator for skew-Hermitian problem.}
%Although our results apply to general matrices, we focus on the skew-Hermitian
%case pertaining in particular to Schr{\"o}dinger equations. In that case,
%the evolution of (\ref{eq0}) is unitary, and a possible attenuation
%or decay of the solution does not have to be taken into account,
%neither in the theoretical bounds nor for practical considerations
%of error and step-size control.

\section{Problem setting, Krylov approximation, and defect-based representation of the approximation error}
\label{sec:problem}

%\wa{Nonexpansive - also comment on general case where $ \mu_2(A) $
%is known.}

\tjnew{We discuss the approximation of the matrix exponential,
\begin{equation}\label{exp(itA)v}
E(t)v = \ee^{\sca\,t \Opgen} v, \quad \Opgen \in \C^{n\times n},~~\sca\in\C,
\end{equation}
with step size $ t $, applied to an initial vector $ v \in \C^n $.
To simplify the notation we assume $ |\sca|=1 $ and $ \|v\|_2=1 $ without loss of generality.
In many relevant applications (Schr{\"o}dinger-type problems) a complex prefactor is applied to the matrix $A$.
The parameter $\sca$ is introduced here to separate the prefactor of the
matrix $A$.
The standard notation for Schr{\"o}dinger-type problems
is obtained in~\eqref{exp(itA)v} with $\sca=-\ii$ and a Hermitian matrix $A$.
For such problems our notation is helpful to simplify the construction
of the Krylov subspace.
%In many relevant applications the matrix $A$ is 
%Hermitian.
%The parameter $\sca$ is introduced here to separate the phase of the
%matrix $A$ to benefit from cheaper construction of the Krylov subspace. 
%For a Hermitian matrix $A$,
%$\sigma=1$ leads to the Hermitian case and $\sigma =-\ii$ to the skew-Hermitian case.

The exponential $ E(t)=\ee^{\sca\,t \Opgen} $ satisfies the
matrix differential equation
\begin{equation*}
 E'(t) = \sca\,\Opgen E(t),\quad E(0)=I.
\end{equation*}

We assume that $\mu_2(\sigma A)\leq 0$, where $\mu_2(\sigma A)$ denotes the logarithmic norm of $\sigma A$,
or equivalently, $W(\sigma A)\subseteq \C_-$, where $W(\sigma A)$ denotes the field of values of $\sigma A$
and we will refer to this assumption as the \emph{nonexpansive case}.
$\mu_2(\sigma A)\leq 0$ implies $\|E(t)\|_2\leq 1$ for $t\geq 0$.
This is essentially a technical assumption, and most of our theoretical results carry over to a more general setting,
in particular if a~priori information about $\mu_2(\sigma A)$ is available, such that
$E(t)$ can be estimated as $\|E(t)\|_2  \leq \ee^{\mu_2(\sigma A)}$.

For the skew-Hermitian case
with $ \sca = -\ii $ we
write\footnote{In this case the matrix $ A $ is usually named $ H $ (Hamiltonian).}
\begin{equation*}
E(t)v = \ee^{-\ii\,t H} v, \quad H \in \C^{n\times n}~~\text{Hermitian.}
\end{equation*}
In this case, $ E(t) $ represents a unitary evolution,
i.e., $ \|E(t)\|_2 = 1$.
}

\paragraph{Krylov subspaces and associated identities.}
The numerical approximation of~\eqref{exp(itA)v} considered here
(see~\eqref{Smv} below)
is based on the conventional Krylov subspace
\begin{equation*}
\Kry_m(\Opgen,v) = \vecspan\{v,\Opgen v,\ldots,\Opgen^{m-1}v\} \subseteq \C^n.
\end{equation*}
First, \tjnew{an orthonormal} basis of $ \Kry_m(\Opgen,v) $ is obtained by the
well-known Arnoldi iteration, see~\cite{saad03}.
This produces a basis matrix $ V_m \in \C^{n \times m} $
satisfying $ V_m^\ast\,V_m = I_{m \times m} $,
and an upper Hessenberg matrix $ T_m \in \C^{m\times m} $
such that the Krylov identity\footnote{
          Here, $ {e_m = (0,\ldots,0,1)^\ast \in \C^m} $,
          and in the sequel we also denote
          $ {e_1 = (1,0,\ldots,0)^\ast \in \C^m} $.}
\begin{equation}\label{krylovidentity}
\Opgen\,V_m = V_m T_m + \tau_{m+1,m}\,v_{m+1}e_m^\ast
\end{equation}
is valid, with $\tau_{m+1,m}\in\R_+$
and $v_{m+1}\in\C^n$ with $\|v_{m+1}\|_2=1$.

\begin{remark}
We are assuming that the Arnoldi iteration is executed until the
desired dimension~$ m $. Then, by construction, all lower
diagonal entries of $ T_m $ are positive~\cite{saad03}.
If this is not the case, i.e., if a breakdown occurs, it is known
that this breakdown is {\em lucky,}, i.e., the approximation~\eqref{Smv}
below obtained in the step before breakdown is already exact,
see~\cite{saad92s}.
\end{remark}

For the case of a Hermitian matrix $A$
the Krylov subspace can be constructed using the Lanczos iteration,
which is a special case of the Arnoldi iteration,
resulting in a tridiagonal matrix $ T_m \in \R^{m \times m} $.
In the following we discuss the general case
and comment on the case of \tjnew{a Hermitian} matrix~$ A $
whenever appropriate.

The following identities hold true due to
the upper Hessenberg [tridiagonal] structure of~$ T_m $ together with~\eqref{krylovidentity}:
\begin{equation} \label{eTe}
e_m^\ast\,T_m^{j}\,e_1 = 0 \quad \text{for}~~ j=0,\ldots,m-2,
\end{equation}
and
\tjnew{
\begin{equation} \label{HVTm}
\Opgen^{j}v = V_m T_m^{j} e_1,~~~0\leq j \leq m-1,
\end{equation}
see for instance~\cite[Theorem 2]{DK89} or~\cite{saad92s}.}
Furthermore, let
\begin{equation} \label{gammam}
\gamma_m  = e_m^\ast\,T_m^{m-1} e_1
          = \prod_{j=1}^{m-1} (T_m)_{j+1,j}\,,
\end{equation}
where the claimed identity also follows from the
upper Hessenberg [tridiagonal] structure of~$ T_m $.

\paragraph{Krylov approximation.}
The standard Krylov approximation to $ E(t)v $ is
\begin{equation} \label{Smv}
S_m(t)v  = V_m\,\ee^{\sca\,t T_m}\,V_m^\ast\,v
         = V_m\,\ee^{\sca\,t T_m}e_1.
\end{equation}
We denote the corresponding error operator by $ L_m(t) $, with
\begin{equation} \label{Lerror}
L_m(t)=E(t)-S_m(t) \in \C^{n \times n}.
\end{equation}

\paragraph{Defect-based integral representation of the approximation error.}
We define the \emph{defect}\, operator $ D_m(t) $ of $ S_m(t) $ by
\begin{equation*}
D_m(t) = \sca\,\Opgen\,S_m(t) - S_m'(t) \in \C^{n \times n}.
\end{equation*}
Then, $ L_m(t)v $ and $ D_m(t)v $ are related via the differential equation
\begin{equation*}
L_m'(t)v = \sca\,\Opgen\,L_m(t)v + D_m(t)v, \quad L_m(0)v=0,
\end{equation*}
whence
\begin{equation} \label{LDint}
L_m(t)v = \int_0^t E(t-s)\,D_m(s)v\,\dd s.
\end{equation}
An explicit representation for $ D_m(s)v $
is obtained from~\eqref{krylovidentity},
\begin{align}
D_m(s)v
&= \sca\,\Opgen\,V_m\,\ee^{\sca\,s\,T_m}e_1 - \sca\,V_m T_m\,\ee^{\sca\,s\,T_m}e_1
 = \sca\,(\Opgen\,V_m - V_m T_m)\,\ee^{\sca\,s\,T_m}e_1 \notag \\
&= \sca \tau_{m+1,m}\,\big(e_m^\ast\,\ee^{\sca\,s\,T_m}e_1\big)\,v_{m+1}.
\label{Dm(s)v}
\end{align}
Asymptotically for $ t \to 0 $,
\begin{equation} \label{Dm-lead}
D_m(t)v = \sca \tau_{m+1,m}\,\gamma_m\frac{(\sigma t)^{m-1}}{(m-1)!}\,v_{m+1}
+ {\mathcal{O}}(t^m),
\end{equation}
which follows from the Taylor series representation
for $ \ee^{\sca\,t\,T_m} $ together with~\eqref{eTe} and~\eqref{gammam}.
\tjnew{Thus, by~\eqref{LDint} and~\eqref{Dm-lead} we obtain
\begin{equation}\label{orderofDmLm}
\| D_m(t)v \| = {\mathcal{O}}(t^{m-1}),
\quad \text{and} \quad
\| L_m(t)v \| = {\mathcal{O}}(t^{m}).
\end{equation}}
We can also characterize the asymptotically leading term of the error:

\begin{proposition}\label{prop:prop3}
\noindent
For any $ A \in \C^{\tjnew{n \times n}} $
the error $ L_m(t)v $ satisfies the asymptotic relation
\begin{equation} \label{asy-leading}
L_m(t)v = \tau_{m+1,m}\gamma_m\,\frac{(\sca\,t)^{m}}{m!}v_{m+1} + R_{m+1}(t),
\quad R_{m+1}(t) = {\mathcal{O}}(t^{m+1}),
\end{equation}
for $ t \to 0 $.
\end{proposition}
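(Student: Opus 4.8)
The plan is to exploit the fact that both $E(t)v = \ee^{\sca t \Opgen}v$ and the Krylov approximation $S_m(t)v = V_m \ee^{\sca t T_m}e_1$ are entire (matrix-exponential) functions of $t$, so that $L_m(t)v$ admits a convergent Taylor expansion about $t=0$ and the claim reduces to identifying its first nonvanishing coefficient. First I would compute the derivatives at the origin termwise: since $E^{(k)}(0)=\sca^k\Opgen^k$ and $S_m^{(k)}(0)=\sca^k V_m T_m^k V_m^\ast$, and since $V_m^\ast v = e_1$ by orthonormality of the Krylov basis, one obtains
\begin{equation*}
L_m^{(k)}(0)v = \sca^k\bigl(\Opgen^k v - V_m T_m^k e_1\bigr).
\end{equation*}

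The key step is then to evaluate this difference using the Krylov identities. For $0\le k\le m-1$ the relation~\eqref{HVTm} gives $\Opgen^k v = V_m T_m^k e_1$, so all these coefficients vanish, which recovers the order $L_m(t)v = \mathcal{O}(t^m)$ already recorded in~\eqref{orderofDmLm}. The crux is the first surviving order $k=m$: here~\eqref{HVTm} is no longer available, and I would bridge the gap by writing $\Opgen^m v = \Opgen\,(\Opgen^{m-1}v) = \Opgen V_m T_m^{m-1}e_1$ and inserting the Krylov identity~\eqref{krylovidentity} for $\Opgen V_m$. The term $V_m T_m\cdot T_m^{m-1}e_1$ reproduces $V_m T_m^m e_1$ and cancels, while the rank-one remainder $\tau_{m+1,m}v_{m+1}e_m^\ast$ contributes $\tau_{m+1,m}\,(e_m^\ast T_m^{m-1}e_1)\,v_{m+1}$; by the definition~\eqref{gammam} this equals $\tau_{m+1,m}\gamma_m v_{m+1}$. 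Hence
\begin{equation*}
L_m^{(m)}(0)v = \sca^m\tau_{m+1,m}\gamma_m\,v_{m+1},
\end{equation*}
and dividing by $m!$ yields exactly the stated leading term $\tau_{m+1,m}\gamma_m\,(\sca t)^m/m!\;v_{m+1}$. Collecting the tail $\sum_{k\ge m+1}L_m^{(k)}(0)v\,t^k/k!$ into $R_{m+1}(t)$, convergence of the (entire) Taylor series guarantees $R_{m+1}(t)=\mathcal{O}(t^{m+1})$.

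I do not expect a serious obstacle here; the argument is essentially the algebra of the Krylov recurrence combined with the analyticity of the exponential, and in particular no nonexpansivity assumption is needed, matching the ``for any $A$'' hypothesis. The one point requiring care is the passage from $k=m-1$ to $k=m$: the identity~\eqref{HVTm} holds only up to power $m-1$, and precisely at power $m$ the vector $\Opgen^m v$ leaves the subspace $\Kry_m(\Opgen,v)$ through the defect direction $v_{m+1}$, which is what produces the nonzero leading error term. As an alternative route, one could instead substitute the defect expansion~\eqref{Dm-lead} into the integral representation~\eqref{LDint} and integrate: approximating $E(t-s)$ by the identity to leading order gives $\int_0^t \sca\tau_{m+1,m}\gamma_m (\sca s)^{m-1}/(m-1)!\,v_{m+1}\,\dd s = \tau_{m+1,m}\gamma_m(\sca t)^m/m!\,v_{m+1}$, and the corrections (from $E(t-s)-I = \mathcal{O}(t-s)$ against the leading defect, and from the $\mathcal{O}(s^m)$ defect tail against the bounded propagator) are each $\mathcal{O}(t^{m+1})$; this reproduces the same result and serves as a useful cross-check.
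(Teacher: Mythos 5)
Your argument is correct and is essentially the paper's own proof: both reduce the claim to the $t^m$ Taylor coefficient, note that the coefficients of order $k\le m-1$ vanish by~\eqref{HVTm}, and obtain the leading term by writing $\Opgen^m v=\Opgen V_m T_m^{m-1}e_1$ and substituting the Krylov identity~\eqref{krylovidentity} to extract $\tau_{m+1,m}\gamma_m v_{m+1}$ via~\eqref{gammam}. The only cosmetic difference is that you rederive the vanishing of the lower-order coefficients directly from~\eqref{HVTm}, whereas the paper quotes $L_m(t)v=\mathcal{O}(t^m)$ from~\eqref{orderofDmLm}; your integral-representation cross-check is a valid alternative but not needed.
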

\tjnew{
\begin{proof}
Taylor expansion. Due to $ L_m(t)v = {\mathcal{O}}(t^m) $, \tjnew{see~\eqref{orderofDmLm},}
\begin{equation}\label{prop-3-proof}
\begin{aligned}
&L_m(t)v = E(t)v - S_m(t)v
= \frac{(\sca\,t)^m}{{m!}}(\Opgen^m v - V_m T_m^m\,e_1) + R_{m+1}(t),\\
&\text{with Taylor remainder}~~ R_{m+1}(t) = {\mathcal{O}}(t^{m+1}).
\end{aligned}
\end{equation}
Multiplying the
identity~\tjnew{{\eqref{HVTm}} (with $ j = m-1 $) by $ A $} and using~\eqref{krylovidentity} gives
\begin{align*}
\Opgen^m\,v
&= \Opgen\,V_m T_m^{m-1} e_1
 = (V_m T_m + \tau_{m+1,m}\,v_{m+1} e_m^\ast) T_m^{m-1} e_1 \\
&= V_m T_m^m\,e_1 + \tau_{m+1,m}\, (e_m^\ast\,T_m^{m-1} e_1)\,v_{m+1}
 = V_m T_m^m\,e_1 + \tau_{m+1,m} \gamma_m v_{m+1},
\end{align*}
whence~\eqref{prop-3-proof} simplifies to~\eqref{asy-leading}. \qed
\end{proof}
}

\tjnew{
\begin{remark}
The Taylor remainder $ R_{m+1} $ in~\eqref{asy-leading}
can be specified in a more explicit way showing its dependence on~$ m $,
$$
R_{m+1}(t)
= \frac{(\sigma\,t)^{m+1}}{m!} \int_{0}^{1} \big(A^{m+1} \ee^{\sigma\,\theta\,t\,A} v - V_m T_m^{m+1}\ee^{\sigma\,\theta\,t\,T_m} e_1 \big){(1-\theta)}^m\,\dd\theta .
$$
\end{remark}}

\section{An upper error bound \tjnew{for the \tjnew{nonexpansive case} in~\eqref{exp(itA)v}}}\label{sec:theorypart}
For the \tjnew{nonexpansive case} we have $\| E(t-s) \|_2 \leq 1$ for $0\leq s \leq t$,
and~\eqref{LDint} implies
\begin{equation*}
\|L_m(t)v\|_2
= \Big\| \int_0^t E(t-s)\,D_m(s)v\,\dd s\ \Big\|_2
\leq \int_0^t \|D_m(s)v\|_2\,\dd s.
\end{equation*}
With $\|v_{m+1}\|_2 = 1$, and
\begin{subequations} \label{errestintdefect}
\begin{equation}\label{defdelta}
\delta_m(s) = e_m^\ast\,\ee^{\sca\,s\,T_m}e_1
            = \big( \ee^{\sca\,s\,T_m} \big)_{m,1},
\end{equation}
together with~\eqref{Dm(s)v} we obtain
\begin{equation}\label{errorapprox1}
\|L_m(t)v\|_2 \leq \tau_{m+1,m} \int_0^t |\delta_m(s)| \, \dd s.
\end{equation}
\end{subequations}
This estimate is also given in~\cite[Section~III.2]{lubich08}.
\tjnew{Of course, the integral in~\eqref{errorapprox1} cannot be computed exactly.}
In~\cite{lubich08} it is proposed to use numerical quadrature\footnote{See also Section~\ref{sec:defectquad} below.}
to approximate the integral in~\eqref{errorapprox1}.
\tjnew{In contrast, our aim here is to derive a computable upper bound.}
We proceed in two steps.\footnote{{In the sequel, the argument
                                  of $ \delta_m(\cdot) $ is again denoted
                                  by $ t $ instead of $ s $.}}

\paragraph{Analytic matrix function via interpolation on the spectrum.}
To approximate the error integral in~\eqref{errorapprox1}
we use the representation of matrix exponentials via Hermite
interpolation of the scalar exponential function on the spectrum
of the matrix~$ T_m $, see~\cite[Chap.~1]{higham08}:
If $\mu_1,\ldots,\mu_r$ ($ r \leq m $)
denote the distinct eigenvalues of~$ T_m $
and $ n_j $ is the dimension of the largest Jordan block associated
with~$ \mu_j $, then
\begin{equation}\label{pt}
\ee^{\sca\,t T_m} = p_t(T_m),
\end{equation}
where $ p_t(\lambda) $ is the {Hermite}
interpolant of degree $ \leq m-1 $ of
the function
\begin{equation}\label{def:thetadd}
{f_t}(\lambda) = \ee^{\sca\,t\lambda}
\end{equation}
over the nodes $ \mu_1,\ldots,\mu_m $ in the sense of~\cite[(1.7)]{higham08},
\begin{equation*}
p_t^{(\ell)}(\mu_j) = f_t^{(\ell)}(\mu_j),
\quad j=1,\ldots,r, \quad \ell=0,\ldots,n_j-1.
\end{equation*}
\wa{For a general matrix, the degree of $ p_t $ may be smaller than
$ m-1 $. However, in our context a special case occurs: Since
the lower diagonal entries of $ T_m $ do not vanish, $ T_m $ is
nonderogatory, i.e., for each eigenvalue $ \mu_j $ the associated
eigenspace is one-dimensional, see~\cite[Section~3.1]{horn85}.
Then, $ \sum_{j=1}^{r} n_j = m $, which implies that the degree of
$ p_t $ is exactly $ m-1 $.}

\wa{In the following we denote the full sequence of the $ m $
eigenvalues of $ T_m $ by $ \lambda_1,\ldots,\lambda_m $.}
By applying basic properties of the Krylov decomposition and
imposed conditions on the numerical range
of $\Opgen$ we obtain
\begin{equation}\label{subsetofspectrum}
\text{spec}(\sca\,T_m) \subseteq
W(\sca\,T_m) \subseteq
W(\sca\,\Opgen) \subseteq \C_-.
\end{equation}

The following proposition is partially related to~\tjnew{\cite[Sec.~3]{CeMo97}} or~\cite{chinesen17}.
\wa{Here, divided differences have to be understood
in the general sense, i.e., in the confluent sense if multiple
eigenvalues occur; for the detailed definition and properties
see~\cite[Section~B.16]{higham08}.}

\begin{proposition}%[Representation and estimate of $\delta_m(t)$]
                    %cf.~\text{\rm\cite[Chap.~3]{CeMo97}}]
                    \label{prop:prop1}

\noindent
Let $T_m \in \C^{m\times m}$ be an upper Hessenberg matrix with eigenvalues $ \lambda_1,\ldots,\lambda_m $
and $ \text{spec}(\sca T_m) \subseteq \C_- $.
Then the function $ \delta_m(t) $ defined as in~\eqref{defdelta}, i.e.,
\begin{equation*}
\delta_m(t) = e_m^\ast\,\ee^{\sca\,t\,T_m}e_1
            = \big( \ee^{\sca\,t\,T_m} \big)_{m,1},
\end{equation*}
satisfies
\begin{equation}\label{deltam-repr}
\delta_m(t) =
\divdif{\lambda_1,\ldots,\lambda_m}{f_t} \gamma_m
\leq {\frac{t^{m-1}}{(m-1)!}\,\gamma_m,}
\end{equation}
with $ \gamma_m $ from~\eqref{gammam} and
where $\divdif{\lambda_1,\ldots,\lambda_m}{f_t}$ is
the {$ (m-1) $-th} divided difference over
$ \text{\rm spec}(T_m) $
of the function $ f_t $ defined in~\eqref{def:thetadd}.
\end{proposition}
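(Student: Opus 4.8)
The plan is to establish the two assertions in~\eqref{deltam-repr} separately: first the exact identity $\delta_m(t)=\divdif{\lambda_1,\ldots,\lambda_m}{f_t}\,\gamma_m$, and then the scalar bound on its modulus.

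For the identity I would expand the Hermite interpolant $p_t$ from~\eqref{pt} in Newton (divided-difference) form over the full, possibly confluent, node sequence $\lambda_1,\ldots,\lambda_m$ (see, e.g.,~\cite[Section~B.16]{higham08}),
\begin{equation*}
p_t(\lambda)=\sum_{k=0}^{m-1}\divdif{\lambda_1,\ldots,\lambda_{k+1}}{f_t}\prod_{j=1}^{k}(\lambda-\lambda_j).
\end{equation*}
This representation of degree exactly $m-1$ is legitimate because $T_m$ is nonderogatory, so that $p_t(T_m)=\ee^{\sca\,t\,T_m}$ holds. Substituting $\lambda=T_m$ and sandwiching between $e_m^\ast$ and $e_1$, I would observe that each factor $\prod_{j=1}^{k}(T_m-\lambda_j I)$ is a polynomial in $T_m$ of degree $k$. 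For $k\le m-2$ every monomial $T_m^\ell$ occurring in it has $\ell\le m-2$, hence $e_m^\ast T_m^\ell e_1=0$ by~\eqref{eTe}, and the whole term vanishes. Only the term $k=m-1$ survives; there its leading monomial gives $e_m^\ast T_m^{m-1}e_1=\gamma_m$ via~\eqref{gammam}, while the lower monomials again vanish by~\eqref{eTe}. This leaves exactly $\delta_m(t)=e_m^\ast p_t(T_m)e_1=\divdif{\lambda_1,\ldots,\lambda_m}{f_t}\,\gamma_m$.

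For the bound I would invoke the Hermite--Genocchi integral representation of the divided difference. Since $f_t^{(m-1)}(\lambda)=(\sca\,t)^{m-1}\ee^{\sca\,t\,\lambda}$, this reads
\begin{equation*}
\divdif{\lambda_1,\ldots,\lambda_m}{f_t}=(\sca\,t)^{m-1}\int_{\Sigma_{m-1}}\ee^{\sca\,t\,(s_1\lambda_1+\cdots+s_m\lambda_m)}\,\dd s,
\end{equation*}
where $\Sigma_{m-1}=\{\,s_j\ge 0,\ \sum_{j=1}^m s_j=1\,\}$ is the standard $(m-1)$-simplex, whose volume in the relevant coordinates is $1/(m-1)!$. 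Taking moduli and using $|\sca|=1$ turns the prefactor into $t^{m-1}$; moreover $s_1(\sca\lambda_1)+\cdots+s_m(\sca\lambda_m)$ is a convex combination of points of $\operatorname{spec}(\sca\,T_m)\subseteq\C_-$, so its real part is $\le 0$ and the integrand is bounded by $1$. Integrating the constant $1$ over $\Sigma_{m-1}$ yields $|\divdif{\lambda_1,\ldots,\lambda_m}{f_t}|\le t^{m-1}/(m-1)!$, and since $\gamma_m=\prod_{j=1}^{m-1}(T_m)_{j+1,j}>0$ in the Krylov setting (the subdiagonal entries are positive, cf.\ the remark following~\eqref{krylovidentity}), multiplication gives the asserted bound on $|\delta_m(t)|$.

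The Newton-form bookkeeping is routine; I expect the inequality step to be the crux. The key point is that the Hermite--Genocchi simplex produces \emph{convex} combinations of the $\sca\lambda_j$, so the spectral \emph{inclusion} $\operatorname{spec}(\sca\,T_m)\subseteq\C_-$ (and not merely a bound on each eigenvalue) is exactly what keeps the exponent in the closed left half-plane and the integrand bounded by $1$. A minor caveat to flag is that $\delta_m(t)$ and the divided difference are in general complex, so the inequality in~\eqref{deltam-repr} has to be understood as the bound on the modulus $|\delta_m(t)|$, which is precisely the quantity entering~\eqref{errorapprox1}.
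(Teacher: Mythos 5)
Your proposal is correct and follows essentially the same route as the paper: the Newton form of the Hermite interpolant combined with~\eqref{eTe} and~\eqref{gammam} for the identity, and a convexity argument for the divided-difference bound (the paper simply cites~\cite[(B.28)]{higham08} where you re-derive that estimate via Hermite--Genocchi, which is the standard proof of the same fact). Your closing caveat is also apt: the paper's inequality in~\eqref{deltam-repr} is indeed to be read as a bound on $|\delta_m(t)|$, which is what the paper's own proof establishes and what is used in~\eqref{errorapprox1}.
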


\begin{proof}
We proceed from the Newton representation of the
interpolant $p_t(\lambda) $ from~\eqref{pt},
\begin{equation*}
p_t(\lambda) = \sum_{j=0}^{m-1}
               \divdif{\lambda_1,\ldots,\lambda_{j+1}}{f_t}\,\omega_j(\lambda),
\end{equation*}
with
$ \omega_j(\lambda) = (\lambda-\lambda_1) \cdots (\lambda-\lambda_{j}) $.
%for $ j=0,\ldots,m-1 $.
{From~\eqref{eTe} and by definition of $ \gamma_m $, % (monic)
it is obvious that the $ \omega_j $ satisfy}
\begin{equation*}
e_m^\ast\,\omega_j(T_m)\,e_1 =
\left\{\begin{array}{ll}
0,       & j=0,\ldots,m-2, \\
\gamma_m,& j=m-1.
\end{array}\right.
\end{equation*}
Together with~\eqref{pt} this shows that the identity claimed
in~\eqref{deltam-repr} is valid:
\begin{equation*}
\delta_m(t)
= e_m^\ast\,\ee^{\sca\,t T_m}\,e_1
= e_m^\ast\,p_t(T_m) e_1
= \sum_{j=0}^{m-1} \divdif{\lambda_1,\ldots,\lambda_{j+1}}{f_t} e_m^\ast\,\omega_j(T_m)e_1
= \divdif{\lambda_1,\ldots,\lambda_m}{f_t} \gamma_m.
\end{equation*}
According to~\cite[(B.28)]{higham08} the divided
difference can be estimated by
\begin{align*}
&|f_t[\lambda_1,\ldots,\lambda_m]| \leq
 \frac{\max_{z \in \Omega} D^{(m-1)}f_t(z)}{(m-1)!}\\
&\text{for convex $\Omega \subseteq \C$
which contains all eigenvalues $ \lambda_j$.}
\end{align*}
With $D^{(m-1)}f_t(\lambda) =
(\sca\,t)^{m-1} \ee^{\sca\,t\lambda} $,
$ |\sca|=1 $ and $\text{Re}(\sca \lambda_j)\leq 0$ we obtain
\begin{equation*}
|\divdif{\lambda_1,\ldots,\lambda_m}{f_t}| \leq \frac{t^{m-1}}{(m-1)!},
\end{equation*}
which implies the estimate~\eqref{deltam-repr}
for $ \delta_m(t) $. \qed
\end{proof}

\paragraph{Error estimate and asymptotical correctness.}

Now we apply Proposition~\ref{prop:prop1} in the context of
our Krylov approximation.

\begin{theorem}[Computable upper bound]\label{theorem:upperboundexp}

\noindent
For the nonexpansive case the error $ L_m(t)v $ of the Krylov approximation~\eqref{Smv}
to $ E(t)v $ satisfies
\begin{equation} \label{asyerrest}
\| L_m(t)v \|_2 \leq \tau_{m+1,m}\gamma_m\,\frac{t^{m}}{m!}
\end{equation}
with $ \tau_{m+1,m} $ from~\eqref{krylovidentity} and
$ \gamma_m $ from~\eqref{gammam}.
\end{theorem}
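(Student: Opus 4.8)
The plan is to combine the defect-based integral bound already established in~\eqref{errorapprox1} with the pointwise estimate on $\delta_m$ from Proposition~\ref{prop:prop1}, after which only an elementary integration remains. Because the substantial analytic work has been front-loaded into the preceding results, the proof itself is short.

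First I would recall that in the nonexpansive case $\|E(t-s)\|_2\leq 1$ for $0\leq s\leq t$, so that the integral representation~\eqref{LDint} together with the explicit defect formula~\eqref{Dm(s)v} and $\|v_{m+1}\|_2=1$ yields exactly
\[
\|L_m(t)v\|_2 \leq \tau_{m+1,m}\int_0^t |\delta_m(s)|\,\dd s ,
\]
which is~\eqref{errorapprox1}. Next I would verify the hypotheses of Proposition~\ref{prop:prop1}: $T_m$ is upper Hessenberg by construction, and $\text{spec}(\sca T_m)\subseteq\C_-$ by the field-of-values chain~\eqref{subsetofspectrum}. The proposition then supplies, for each $s\in[0,t]$, the bound $|\delta_m(s)|\leq \gamma_m\, s^{m-1}/(m-1)!$, where $\gamma_m>0$ is the product of the positive subdiagonal entries of $T_m$ from~\eqref{gammam}.

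Finally I would substitute this bound under the integral sign and use $\int_0^t s^{m-1}\,\dd s = t^m/m$ to conclude
\[
\|L_m(t)v\|_2 \leq \tau_{m+1,m}\gamma_m\int_0^t \frac{s^{m-1}}{(m-1)!}\,\dd s
= \tau_{m+1,m}\gamma_m\,\frac{t^m}{m!},
\]
which is the asserted bound~\eqref{asyerrest}. I do not expect a genuine obstacle here; the only point demanding care is bookkeeping of the integration variable, since the estimate~\eqref{deltam-repr} is stated with argument $t$ (following the footnote that relabels the argument of $\delta_m$) and must be applied with $t$ replaced by the running variable $s$ before integrating. All the real difficulty---the spectral containment~\eqref{subsetofspectrum} and the Genocchi--Hermite divided-difference estimate---has already been absorbed into Proposition~\ref{prop:prop1}, so this theorem is essentially a corollary obtained by integrating the leading factor $s^{m-1}/(m-1)!$.
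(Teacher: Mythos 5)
Your proposal is correct and follows exactly the paper's own argument: start from the defect-based integral bound~\eqref{errorapprox1} (valid since $\|E(t-s)\|_2\leq 1$ in the nonexpansive case), apply Proposition~\ref{prop:prop1} via the spectral containment~\eqref{subsetofspectrum} to get $|\delta_m(s)|\leq \gamma_m\,s^{m-1}/(m-1)!$, and integrate. No differences worth noting.
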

\begin{proof}
% we use nonexpansive and~\eqref{subsetofspectrum} for assumptions of Proposition~\ref{prop:prop1}.
We proceed from~\eqref{errestintdefect}.
For $ \delta_m $ defined in~\eqref{defdelta},
Proposition~\ref{prop:prop1} implies
\begin{equation*}
|\delta_m(s)|
%= |e_m^\ast\,\ee^{-\ii\,s T_m} e_1|
%|\psi_{m-1}(t)| \gamma_m =
%|\divdif{\lambda_1,\ldots,\lambda_m}{f_t}| \gamma_m
\leq \frac{s^{m-1}}{(m-1)!} \gamma_m,
\end{equation*}
and this gives an upper bound for the error
integral~\eqref{errorapprox1}:
\begin{equation*}
\|L_m(t)v\|_2 \leq
%\tau_{m+1,m}\int_0^t |e_m^\ast\,\ee^{\ii\,s\,T_m} e_1|\,\dd s \leq
\tau_{m+1,m} \gamma_m \int_0^t \frac{s^{m-1}}{(m-1)!}\,\dd s
= \tau_{m+1,m}\gamma_m \frac{t^{m}}{m!},
\end{equation*}
which completes the proof. 
\tjnew{
Proposition~\ref{prop:prop1} is applied here in the nonexpansive case ($ W(\sigma A) \subseteq \C_- $)
which implies the requirement $ \text{spec}(\sca T_m) \subseteq \C_- $, see~\eqref{subsetofspectrum}.}

\qed
\end{proof}

The upper bound~\eqref{asyerrest} corresponds to the 2-norm of the
leading error term~\eqref{asy-leading} according to Proposition~\ref{prop:prop3}.
It is easily computable
from the \tjnew{Krylov} decomposition~\eqref{krylovidentity}.
We denote the error estimate given by~\eqref{asyerrest} as
\begin{equation}\label{errornew}\tag{$\Era$}
\Era = \tau_{m+1,m}\gamma_m\,\frac{t^{m}}{m!}.
\end{equation}
%We will refer to $ \Era $ as the \tjnew{strict} error estimate,
%\tjnew{where 'strict' means that it constitutes a true proven upper bound.}
\begin{proposition}[Asymptotical correctness]\label{prop:prop4}

\noindent
The upper bound~\eqref{asyerrest} is asymptotically correct
for $ t \to 0 $, i.e.,
\begin{equation} \label{asyerrcorr}
\| L_m(t)v \|_2 = \tau_{m+1,m}\gamma_m\,\frac{t^m}{m!} + {\mathcal{O}}(t^{m+1}).
\end{equation}
\end{proposition}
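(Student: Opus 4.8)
The plan is to show that the rigorous upper bound in~\eqref{asyerrest} and the true error $\|L_m(t)v\|_2$ agree up to terms of order $\mathcal{O}(t^{m+1})$, which is exactly the assertion~\eqref{asyerrcorr}. The natural starting point is Proposition~\ref{prop:prop3}, which already gives the leading-order expansion of the error vector itself:
\begin{equation*}
L_m(t)v = \tau_{m+1,m}\gamma_m\,\frac{(\sca\,t)^{m}}{m!}\,v_{m+1} + R_{m+1}(t),
\qquad R_{m+1}(t) = \mathcal{O}(t^{m+1}).
\end{equation*}
First I would take the Euclidean norm of both sides and exploit the fact that $v_{m+1}$ is a unit vector.

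The key computation is then to pass from the norm of a vector sum to the norm of the leading term plus a controlled remainder. Writing $c(t) = \tau_{m+1,m}\gamma_m\,(\sca\,t)^m/m!$, so that $L_m(t)v = c(t)\,v_{m+1} + R_{m+1}(t)$, the reverse and forward triangle inequalities give
\begin{equation*}
\big|\, \|L_m(t)v\|_2 - |c(t)|\,\|v_{m+1}\|_2 \,\big| \leq \|R_{m+1}(t)\|_2 = \mathcal{O}(t^{m+1}).
\end{equation*}
Since $\|v_{m+1}\|_2 = 1$ and $|c(t)| = \tau_{m+1,m}\gamma_m\,t^m/m!$ (using $|\sca|=1$ and the positivity of $\tau_{m+1,m}$ and $\gamma_m$, the latter being a product of positive subdiagonal entries by~\eqref{gammam}), this is precisely~\eqref{asyerrcorr}. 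This step essentially reduces the statement to a continuity property of the norm applied to the already-established expansion.

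The main obstacle, if any, is not analytic but a matter of being careful that the leading-order coefficient $|c(t)|$ coincides with the right-hand side of the upper bound~\eqref{asyerrest} rather than merely being comparable to it: one must verify that the factor $\tau_{m+1,m}\gamma_m$ is genuinely nonnegative so that $|c(t)| = \tau_{m+1,m}\gamma_m\,t^m/m!$ with no spurious sign or modulus discrepancy. This is guaranteed by $\tau_{m+1,m}\in\R_+$ from~\eqref{krylovidentity} and by $\gamma_m = \prod_{j=1}^{m-1}(T_m)_{j+1,j} > 0$ from~\eqref{gammam} (recalling the Remark that all subdiagonal entries are positive when the Arnoldi iteration runs to dimension $m$ without breakdown). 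With this observed, the estimate~\eqref{asyerrcorr} follows immediately, confirming that the deviation of the bound from the true error is asymptotically negligible compared to the error itself, as claimed in the abstract.
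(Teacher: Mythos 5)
Your argument is correct and is essentially identical to the paper's own proof: both invoke Proposition~\ref{prop:prop3} and apply the reverse triangle inequality, using $|\sca|=1$, $\|v_{m+1}\|_2=1$, and the positivity of $\tau_{m+1,m}\gamma_m$ to identify the upper bound with the norm of the leading error term. Your explicit remark on the nonnegativity of $\gamma_m$ is a welcome detail that the paper leaves implicit.
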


\begin{proof}
The asymptotic estimate
\begin{align*}
\bigg| \| L_m(t)v \|_2 - \tau_{m+1,m}\gamma_m\,\frac{t^m}{m!} \bigg|
&=
\bigg| \| L_m(t)v \|_2 -
        \Big\| \tau_{m+1,m}\gamma_m\,\frac{(\sca\,t)^{m}}{m!}v_{m+1} \Big\|_2
\bigg| \\
&\leq
\Big\| L_m(t)v - \tau_{m+1,m}\gamma_m\,\frac{(\sca\,t)^{m}}{m!}v_{m+1} \Big\|_2
= {\mathcal{O}}(t^{m+1})
\end{align*}
is valid due to~Proposition~\ref{prop:prop3},
and this proves~\eqref{asyerrcorr}. \qed
\end{proof}

%%%%%
%%%%%
%%%%%
\begin{remark}\label{SIkrylov}
In \cite[Section 4]{eshhoc06} a defect-based error formulation is given
for the shift-and-invert Krylov approximation of the matrix exponential function.
In contrast to the standard Krylov method, the defect is not of order $m-1$ for $t\to 0$ there.
Hence, our new results do not directly apply to shift-and-invert Krylov approximations.
A study of a posteriori error estimates for the shift-and-invert approach is a topic of future investigations.
\end{remark}

\section{Krylov approximation to $\phif$-functions.} \label{sec:phi}
As another application we consider the so-called $ \phif $-functions,
with power series representation
\begin{subequations}\label{defphi}
\begin{equation}\label{defphiseries}
\phif_p(z) = \sum_{k=0}^{\infty} \frac{z^k}{(k+p)!}, \quad p\geq 0.
\end{equation}
We have $ \phif_0(z) = \ee^z $, and
\begin{equation} \label{defphiint}
\phif_p(z) = \frac{1}{(p-1)!} \int_0^1 (1-\theta)^{p-1}\ee^{\theta z}\,\dd\theta,
\quad p \geq 1.
\end{equation}
\end{subequations}

As the matrix exponential, $\phif$-functions of matrices also appear in a wide range of applications, such as exponential integrators,
see for instance~\cite{MH11,hochbrucketal98,hocost10,niewri12,sidje98}.
Krylov approximation is a common technique to evaluate $\phif$-functions of matrices applied to a starting vector,
\begin{equation} \label{kry-phi}
\phif_p(\sca\,t \Opgen)v \approx V_m \phif_p(\sca\,t T_m) e_1,~~~~p\geq 0.
\end{equation}
Since $\phif$-functions are closely related to the matrix exponential,
our ideas can be applied to these as well.
We use the following notation for the error in the $\phif$-functions:
\begin{equation}\label{generalizeerrornotation}
L^p_m(t)v = \phif_p(\sca\,t\,\Opgen)v - V_m \phif_p(\sca\,t\,T_m) e_1.
\end{equation}
With~\eqref{generalizeerrornotation} we generalize the previously used notation: $ L_m(t) = L^0_m(t) $.
\begin{theorem}\label{theorem:upperboundphi}
The error of the Krylov approximation~\eqref{kry-phi}
to $ \phif_p(\sca\,t \Opgen)v $ with $ p \geq 0 $ satisfies
\begin{subequations}
\begin{equation}\label{leadingtermphij}
L^p_m(t)v = \tau_{m+1,m}\,\gamma_m \frac{(\sca\, t)^m}{(m+p)!} \, v_{m+1} + \mathcal{O}(t^{m+1}).
\end{equation}
Furthermore, \tjnew{in the nonexpansive case} its norm is bounded by
\begin{equation}\label{eresphij}
\|L^p_m(t)v \|_2  \leq   \tau_{m+1,m}\,\gamma_m\,\frac{t^{m}}{(m+p)!},
\end{equation}
\end{subequations}
and this bound is asymptotically correct for $ t \to 0 $.
\end{theorem}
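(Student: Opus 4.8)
The plan is to reduce the $\phif_p$-case entirely to the exponential case that has already been settled, by exploiting the integral representation~\eqref{defphiint}. For $p=0$ we have $\phif_0 = \ee^{(\cdot)}$, so $L^0_m(t) = L_m(t)$ and all three claims follow directly from Proposition~\ref{prop:prop3}, Theorem~\ref{theorem:upperboundexp}, and Proposition~\ref{prop:prop4}, noting that $(m+0)! = m!$. Hence it suffices to treat $p \geq 1$.

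For $p \geq 1$, I would first apply~\eqref{defphiint} to both terms in~\eqref{generalizeerrornotation}. Since the scalar representation holds verbatim with $z$ replaced by the matrices $\sca\,t\,\Opgen$ and $\sca\,t\,T_m$, and since $V_m$ passes through the scalar integration, this yields the central identity
\begin{equation*}
L^p_m(t)v = \frac{1}{(p-1)!}\int_0^1 (1-\theta)^{p-1}\,L_m(\theta\,t)v\,\dd\theta,
\end{equation*}
which expresses the $\phif_p$-error as a nonnegatively weighted integral of the exponential error $L_m$ evaluated at the substep $\theta\,t$. This structural observation is, I expect, the main step; everything else is a consequence of results already established for $L_m$. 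The one point requiring mild care is the uniformity (in $\theta\in[0,1]$) of the remainder estimate, which is guaranteed by the explicit form of $R_{m+1}$ recorded in the remark following Proposition~\ref{prop:prop3}.

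The leading-term claim~\eqref{leadingtermphij} then follows by inserting the expansion~\eqref{asy-leading} for $L_m(\theta\,t)v$ into the integral. The leading contribution equals $\tau_{m+1,m}\gamma_m\frac{(\sca\,t)^m}{m!}v_{m+1}$ multiplied by $\frac{1}{(p-1)!}\int_0^1 (1-\theta)^{p-1}\theta^m\,\dd\theta$, while the remainder integrates to $\mathcal{O}(t^{m+1})$ by the uniform bound. The scalar factor is the Beta integral $\int_0^1 (1-\theta)^{p-1}\theta^m\,\dd\theta = \frac{m!\,(p-1)!}{(m+p)!}$, and substituting this value collapses the prefactor to $\tau_{m+1,m}\gamma_m\frac{(\sca\,t)^m}{(m+p)!}$, which is exactly~\eqref{leadingtermphij}.

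For the norm bound~\eqref{eresphij}, I would take norms in the central identity and move them inside the integral (legitimate since $(1-\theta)^{p-1}\geq 0$), then apply the nonexpansive-case estimate $\|L_m(\theta\,t)v\|_2 \leq \tau_{m+1,m}\gamma_m\frac{(\theta\,t)^m}{m!}$ from Theorem~\ref{theorem:upperboundexp}. The same Beta integral reappears and reproduces the bound $\tau_{m+1,m}\gamma_m\frac{t^m}{(m+p)!}$. Finally, asymptotical correctness is argued exactly as in Proposition~\ref{prop:prop4}: since $|\sca|=1$ and $\|v_{m+1}\|_2 = 1$, the norm of the leading term in~\eqref{leadingtermphij} coincides with the bound in~\eqref{eresphij}, so by the reverse triangle inequality the gap between $\|L^p_m(t)v\|_2$ and the bound is dominated by the norm of the $\mathcal{O}(t^{m+1})$ remainder, and is therefore itself $\mathcal{O}(t^{m+1})$.
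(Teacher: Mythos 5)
Your proposal is correct and follows essentially the same route as the paper: the $p=0$ case is dispatched by the earlier results, and for $p\geq 1$ the norm bound is obtained exactly as in the paper, via the integral representation $L^p_m(t)v = \frac{1}{(p-1)!}\int_0^1(1-\theta)^{p-1}L_m(\theta t)v\,\dd\theta$ combined with Theorem~\ref{theorem:upperboundexp} and the Beta integral. The only (minor) deviation is that the paper derives the leading term~\eqref{leadingtermphij} directly from the power series~\eqref{defphiseries}, mimicking Proposition~\ref{prop:prop3}, whereas you obtain it by integrating the expansion~\eqref{asy-leading} against the Beta weight --- both are valid, and your explicit attention to the uniformity of the remainder in $\theta$ is a point the paper leaves implicit.
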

\begin{proof}
For $p=0$ the result directly follows from Propositions~\ref{prop:prop3},~\ref{prop:prop4} and Theorem~\ref{theorem:upperboundexp}. We now assume $ p \geq 1 $.
Via the series representation~\eqref{defphiseries} of $\phif_p$
we can determine the leading term of the error in an analogous way
as in Proposition~\ref{prop:prop3}:
\begin{align*}
\phif_p(\sca\,t \Opgen)v -  V_m \phif_p(\sca\,t T_m) e_1
&= \frac{(\sca\,t)^m \,(A^m v - V_m T_m^m e_1)}{(m+p)!} + \mathcal{O}(t^{m+1}) \\
%&= \sum_{k=m}^{\infty} \frac{(\sca\,t)^k \,(A^k v - V_m T_m^k e_1)}{(k+p)!} \\
&= \tau_{m+1,m}\,\gamma_m \frac{(\sca\, t)^m}{(m+p)!} \, v_{m+1} + \mathcal{O}(t^{m+1}),
\end{align*}
which proves~\eqref{leadingtermphij}.

Furthermore, proceeding from~\eqref{defphiint} we obtain
\begin{align*}
\phif_p(\sca\,t \Opgen)v - V_m \phif_p(\sca\,t T_m) e_1
&= \frac{1}{(p-1)!} \int_0^1 (1-\theta)^{p-1}\big( \ee^{\sca\,\theta\,t \Opgen} v - V_m\,\ee^{\sca\,\theta\,t T_m} e_1 \big) \,\dd \theta\\
&= \frac{1}{(p-1)!} \int_0^1 (1-\theta)^{p-1} L_m(\theta\,t)v \,\dd \theta,
\end{align*}
with the error $L_m(t)v$ for the matrix exponential case.
Now we apply Theorem~\ref{theorem:upperboundexp} to obtain
\begin{align*}
\| \phif_p(\sca\,t \Opgen)v - V_m \phif_p(\sca\,t T_m) e_1 \|_2
&\leq \frac{1}{(p-1)!} \int_0^1 (1-\theta)^{p-1}  \|L_m(\theta\,t)v\|_2 \,\dd \theta\\
&\leq \tau_{m+1,m}\,\gamma_m\,\frac{t^m}{(p-1)!\,m!} \int_0^1 (1-\theta)^{p-1} \theta^m \,\dd \theta\\
&= \tau_{m+1,m}\,\gamma_m\,\frac{t^{m}}{(m+p)!},
\end{align*}
which proves~\eqref{eresphij}. \qed
\begin{comment}
{
whereat, the last step can be shown similar to the equivalence of~\eqref{defphiseries} and~\eqref{defphiint}.
Equivalence of~\eqref{defphiseries} and~\eqref{defphiint} can be proven by recursive representation of $\phi$-functions and integration by parts.
TODO: good reference or short proof.
Prove:
\begin{equation}\label{equofintandfact}
\frac{1}{(j-1)!}\int_0^1 (1-s)^{j-1} s^m \,\dd s \stackrel{!}{=} \frac{m!}{(m+j)!}.
\end{equation}
By the equivalence of~\eqref{defphiseries} and~\eqref{defphiint} we get
\begin{equation*}
\frac{1}{(j-1)!}\int_0^1 \Big((1-s)^{j-1} \sum_{k=0}^{\infty} \frac{(zs)^k}{k!} \Big)\,\dd s = \sum_{k=0}^{\infty} \frac{z^k}{(k+j)!}.
\end{equation*}
Since this equality holds for all $z\in\C$ we can compare terms of $k=m$, which refer to $z^m$ terms, on both sides to observe
\begin{equation*}
\frac{1}{(j-1)!}\int_0^1 \Big((1-s)^{j-1} \frac{(zs)^m}{m!} \Big)\,\dd s = \frac{z^m}{(m+j)!}.
\end{equation*}
Choosing $z=1$ leads to
\begin{equation*}
\frac{1}{(j-1)!}\int_0^1 \Big((1-s)^{j-1} \frac{s^m}{m!} \Big)\,\dd s = \frac{1}{(m+j)!},
\end{equation*}
which proves~\eqref{equofintandfact}.
}
\end{comment}
\end{proof}

\section{Corrected Krylov approximation for the exponential and $\phif$-functions.}
\label{sec:saadest}

%Error representation from Saad is still used for error estimates in practice, see~\cite{sidje98} or~\cite{niewri12}.

%The proof of the following error representation
%can be based on~\eqref{LDint},\,\eqref{Dm(s)v}
%and using~\eqref{defphiint2}.

Let us recall the well-known error representation given in~\cite{saad92s}.
\begin{proposition}{{\rm{see~\cite[Theorem 5.1]{saad92s}}}}\label{prop:saad92}
With the $\phif$-functions defined in~\eqref{defphi},
the error~\eqref{Lerror} can be represented in the form
\begin{align}\label{sumofsaad}
L_m(t)v =
 \tau_{m+1,m}\,\sca\,t
  \sum_{j=1}^{\infty} e_m^\ast\phif_{j}(\sca\,tT_m)\,e_1 (\sca\,t \Opgen)^{j-1}v_{m+1}.
\end{align}
\end{proposition}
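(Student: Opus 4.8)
The plan is to derive the series representation~\eqref{sumofsaad} by working from the defect-based integral representation~\eqref{LDint} together with the explicit defect formula~\eqref{Dm(s)v}, and then expanding the scalar quantity $\delta_m(s)=e_m^\ast\,\ee^{\sca\,s\,T_m}e_1$ in a suitable series. Recall that~\eqref{LDint} gives
\begin{equation*}
L_m(t)v = \int_0^t E(t-s)\,D_m(s)v\,\dd s
        = \sca\,\tau_{m+1,m} \int_0^t \big(e_m^\ast\,\ee^{\sca\,s\,T_m}e_1\big)\,\ee^{\sca(t-s)A}\,v_{m+1}\,\dd s,
\end{equation*}
where the second equality uses~\eqref{Dm(s)v}. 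The key idea is to expand the operator factor $\ee^{\sca(t-s)A}$ in its Taylor series in powers of $\sca(t-s)A$, so that the integral decouples into scalar integrals against $e_m^\ast\,\ee^{\sca\,s\,T_m}e_1$ multiplied by powers $(\sca A)^{j-1}v_{m+1}$.

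First I would substitute $\ee^{\sca(t-s)A}=\sum_{k=0}^{\infty}\frac{(\sca(t-s))^k}{k!}A^k$ and interchange summation and integration (justified by absolute convergence of the exponential series and continuity of the integrand on the compact interval $[0,t]$). This yields
\begin{equation*}
L_m(t)v = \sca\,\tau_{m+1,m}\sum_{k=0}^{\infty}\frac{(\sca)^k\,(\sca A)^k}{k!}
          \left(\int_0^t (t-s)^k\,\delta_m(s)\,\dd s\right) v_{m+1},
\end{equation*}
with $\delta_m(s)=e_m^\ast\,\ee^{\sca\,s\,T_m}e_1$. The remaining task is to identify each scalar integral $\int_0^t (t-s)^k\,e_m^\ast\,\ee^{\sca\,s\,T_m}e_1\,\dd s$ with the corresponding $\phif$-function term. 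Writing $j=k+1$ and reindexing, I would show that this integral equals $t\cdot e_m^\ast\,\phif_{j}(\sca\,t\,T_m)\,e_1$ times an appropriate power of $\sca\,t$. This is precisely the convolution identity underlying the $\phif$-functions: integrating $(t-s)^{j-1}$ against an exponential matrix reproduces, by the integral representation~\eqref{defphiint} applied to the matrix $\sca\,T_m$ (after the rescaling $s=\theta t$), the function $\phif_{j}$ evaluated at $\sca\,t\,T_m$. Matching the powers of $\sca$ and $t$ carefully then produces exactly the stated form with $(\sca\,t\,A)^{j-1}$ and prefactor $\tau_{m+1,m}\,\sca\,t$.

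The main obstacle I anticipate is the bookkeeping of the scalar prefactors $\sca$ and $t$, together with the clean verification of the convolution identity linking $\int_0^t (t-s)^{j-1}\,\ee^{\sca\,s\,T_m}\,\dd s$ to $\phif_{j}(\sca\,t\,T_m)$. This identity is the analogue, at the matrix level, of the equivalence between the series definition~\eqref{defphiseries} and the integral definition~\eqref{defphiint}; establishing it rigorously requires the substitution $s=\theta\,t$ and a comparison with~\eqref{defphiint}, keeping track of the factor $t^{j}$ that arises from the change of variables. A secondary point worth checking is the convergence of the resulting infinite series in~\eqref{sumofsaad}, which follows from the rapid decay of $\phif_{j}$ and the boundedness of the powers of $\sca\,t\,A$ applied to the fixed vector $v_{m+1}$, but this is not needed merely to establish the identity as a formal equality of convergent expressions.
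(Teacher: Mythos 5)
The paper does not actually prove this proposition: it is imported verbatim from Saad's Theorem~5.1 in~\cite{saad92s}, whose original argument proceeds via the recursively defined $\phif$-functions and repeated application of the Krylov identity. Your derivation is therefore a genuinely different (and self-contained) route, and it is sound: starting from the defect integral representation~\eqref{LDint} with~\eqref{Dm(s)v}, expanding $E(t-s)=\ee^{\sca(t-s)A}$ in its Taylor series, interchanging sum and integral, and then identifying $\int_0^t (t-s)^k\,\delta_m(s)\,\dd s = t^{k+1}\,k!\;e_m^\ast\phif_{k+1}(\sca\,t\,T_m)e_1$ via the substitution $s=\theta t$ and~\eqref{defphiint} gives exactly $\tau_{m+1,m}\,\sca\,t\sum_{j\ge 1}\big(e_m^\ast\phif_j(\sca\,t\,T_m)e_1\big)(\sca\,t\,A)^{j-1}v_{m+1}$. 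This has the advantage of fitting seamlessly into the machinery the paper already builds in Section~\ref{sec:problem}, whereas Saad's proof is independent of the defect formulation. One transcription slip to fix: in your intermediate display the coefficient $\frac{(\sca)^k\,(\sca A)^k}{k!}$ should read $\frac{(\sca A)^k}{k!}$ — the factor $\sca^k$ coming from $(\sca(t-s))^k$ is already absorbed into $(\sca A)^k$ once $(t-s)^k$ is moved into the integral, and with $\sca=-\ii$ the spurious extra $\sca^k$ is not harmless. After that correction the powers of $\sca$ and $t$ combine to $(\sca\,t\,A)^{j-1}$ with prefactor $\tau_{m+1,m}\,\sca\,t$ exactly as in~\eqref{sumofsaad}, and the absolute convergence needed to justify the interchange is immediate on the compact interval $[0,t]$.
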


In~\cite{saad92s} it is stated that, typically,
the first term of the sum given in
Proposition~\ref{prop:saad92}, formula~\eqref{errorsaadterm1},
is already a good approximation to $L_m(t)v$.
Analogously to~\cite[Section 5.2]{saad92s}
we use the notation $\Erone$
for the norm of this term,
\begin{equation}\label{errorsaadterm1}\tag{$\Erone$}
\Erone = \tau_{m+1,m}\,t\,|e_m^\ast\phif_1(\sca\,t T_m)e_1|.
\end{equation}
In~\cite{chinesen17} it is even shown
that~$\Erone$ is an upper bound up
to a factor depending on spectral properties of the matrix $\Opgen$.
For the case of Hermitian $\sca\Opgen$ we show $\|L_m(t)v\|_2 \leq \Erone$ in Proposition~\ref{upperboundphi} below.

In Remark~\ref{rmk:PhiAsymptoticCorrect} below we show that $\Erone$ is also an asymptotically correct
approximation for the error norm
(in the sense of Proposition~\ref{prop:prop4}).
Furthermore, the error estimate $\Erone$ is computable at nearly no extra cost,
see~\cite[Proposition~2.1]{saad92s}.

According to~\cite[Proposition~2.1]{saad92s},
$\phif_1(\sca\,t T_m)$ can be computed from the extended matrix
\begin{subequations}
\begin{equation}\label{definetcor}
\Tcor_m =
\begin{bmatrix}
T_m&0\\
\tau_{m+1,m}\,e_m^\ast&0
\end{bmatrix}\in\C^{(m+1)\times (m+1)}
\end{equation}
as
\begin{equation}\label{computephi1}
\ee^{\sca\,t\Tcor_m} e_1=
\begin{bmatrix}
\ee^{\sca\,tT_m}e_1\\
\tau_{m+1,m}\,\sca\,t \,(e_m^\ast\,\phif_1(\sca\,t T_m)\,e_1)
\end{bmatrix}\in\C^{m+1}.
\end{equation}
\end{subequations}
Equation~\eqref{computephi1} can be used to evaluate the error estimate
$\Erone$
or a corrected Krylov approximation in the form
\begin{equation} \label{Smbar}
\Scor_m(t)v = \Vcor_m\,\ee^{\sca\,t\Tcor_m}e_1
\text{~~~with~~~}
\Vcor_m=\Big[ V_m\,\big|\,v_{m+1} \Big] \in\C^{n\times (m+1)},
\end{equation}
for which the first term of the error expansion according to
Proposition~\ref{prop:saad92} vanishes, see~\cite{saad92s}.
For the error of the corrected Krylov approximation we use the notation
\begin{equation*}
L_m^+(t)v = E(t)v - \Scor_m(t)v.
\end{equation*}

For general $\phif$-functions we obtain an error representation similar to Proposition~\ref{prop:saad92} and
a corrected Krylov approximation for $\phif$-functions.
The corrected Krylov approximation to $\phif_p(\sca\,t\,\Opgen)v$ is given in~\cite[Theorem 2]{sidje98}:
\begin{equation*}
\phif_p(\sca\,t\,\Opgen)v \approx \Vcor_m \phif_p(\sca\,t\,\Tcor_m) e_1
\end{equation*}
with $\Tcor_m$ and $\Vcor_m$ given in~\eqref{definetcor} and~\eqref{Smbar}.
The error of the corrected Krylov approximation is denoted by
\begin{equation}\label{generalizeerrornotation2}
L^{p,+}_m(t)v = \phif_p(\sca\,t\,\Opgen)v - \Vcor_m \phif_p(\sca\,t\,\Tcor_m) e_1.
\end{equation}
\begin{proposition}[\rm{see~\cite[Theorem 2]{sidje98}}]\label{prop:generalSaadsum}
\begin{subequations}
The error of the Krylov approximation $L^p_m(t)v$, see~\eqref{generalizeerrornotation}, satisfies
\begin{equation}\label{generalizeerrorSaadsum1}
L^p_m(t)v = \tau_{m+1,m} \sca\,t \sum_{j=p+1}^{\infty} (e_m^\ast \phif_j(\sca\,t\,T_m) e_1) \,(\sca\,t\,\Opgen)^{j-p-1} v_{m+1}.
\end{equation}
The error of the corrected Krylov approximation $L^{p,+}_m(t)v$, see~\eqref{generalizeerrornotation2}, is given by
\begin{equation}\label{generalizeerrorSaadsum2}
L^{p,+}_m(t)v = \tau_{m+1,m} \sca\,t \sum_{j=p+2}^{\infty} (e_m^\ast \phif_j(\sca\,t\,T_m) e_1) \,(\sca\,t\,\Opgen)^{j-p-1} v_{m+1}.
\end{equation}
\end{subequations}
\end{proposition}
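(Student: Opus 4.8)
The plan is to derive the first representation~\eqref{generalizeerrorSaadsum1} by reducing it to the already-available case $p=0$ (Proposition~\ref{prop:saad92}), and then to obtain the corrected version~\eqref{generalizeerrorSaadsum2} from~\eqref{generalizeerrorSaadsum1} by computing the correction term explicitly. For $p=0$ the identity~\eqref{generalizeerrorSaadsum1} coincides with Proposition~\ref{prop:saad92}, so it remains to treat $p\geq 1$. Here I would start from the integral representation~\eqref{defphiint} of $\phif_p$, applied simultaneously to $\sca\,t\Opgen$ and to $\sca\,t T_m$, exactly as in the proof of Theorem~\ref{theorem:upperboundphi}, to write
\begin{equation*}
L^p_m(t)v = \frac{1}{(p-1)!}\int_0^1 (1-\theta)^{p-1}\,L_m(\theta\,t)v\,\dd\theta,
\end{equation*}
which expresses the $\phif_p$-error through the exponential error $L_m(\theta\,t)v$.

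Next I would insert Saad's series~\eqref{sumofsaad} for $L_m(\theta\,t)v$ and interchange summation and integration. Expanding each scalar coefficient $e_m^\ast\phif_j(\sca\,\theta\,t T_m)e_1$ by its power series~\eqref{defphiseries} produces, for each pair $(j,k)$, a Beta integral $\int_0^1 (1-\theta)^{p-1}\theta^{\,j+k}\,\dd\theta = \frac{(j+k)!\,(p-1)!}{(j+k+p)!}$ (the total power $\theta^{j}$ comes from the prefactor $\sca\,\theta\,t$ and from $(\sca\,\theta\,t\Opgen)^{j-1}$, and $\theta^{k}$ from the series of $\phif_j$). The factor $(j+k)!$ cancels the $(j+k)!$ in $\phif_j$, and summing the remaining $\sum_k (\sca\,t)^k e_m^\ast T_m^k e_1/(j+k+p)!$ reassembles precisely $e_m^\ast\phif_{j+p}(\sca\,t T_m)e_1$. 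This gives
\begin{equation*}
L^p_m(t)v = \tau_{m+1,m}\,\sca\,t\sum_{j=1}^{\infty}\big(e_m^\ast\phif_{j+p}(\sca\,t T_m)e_1\big)\,(\sca\,t\Opgen)^{j-1}v_{m+1},
\end{equation*}
and relabelling the summation index so that it starts at $p+1$ turns this into~\eqref{generalizeerrorSaadsum1}.

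For the corrected approximation I would exploit the block structure of $\Tcor_m$ in~\eqref{definetcor}. Since $\Tcor_m$ is block lower triangular with vanishing last column, its powers satisfy $(\Tcor_m)^k e_1 = \big(T_m^k e_1,\ \tau_{m+1,m}\,e_m^\ast T_m^{k-1}e_1\big)^\ast$ for $k\geq1$, and feeding this into the power series of $\phif_p$ yields the natural generalization of~\eqref{computephi1},
\begin{equation*}
\Vcor_m\,\phif_p(\sca\,t\Tcor_m)e_1 = V_m\,\phif_p(\sca\,t T_m)e_1 + \tau_{m+1,m}\,\sca\,t\,\big(e_m^\ast\phif_{p+1}(\sca\,t T_m)e_1\big)v_{m+1}.
\end{equation*}
Consequently $L^{p,+}_m(t)v$ equals $L^p_m(t)v$ minus this correction term, which is exactly the $j=p+1$ summand of~\eqref{generalizeerrorSaadsum1} (there $(\sca\,t\Opgen)^{j-p-1}=I$). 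Dropping that leading term leaves the sum starting at $j=p+2$, i.e.~\eqref{generalizeerrorSaadsum2}; note this step is uniform in $p\geq 0$ and recovers~\eqref{computephi1} for $p=0$.

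The Beta-integral evaluation, the factorial cancellation and the reindexing are routine. The one point that genuinely needs care, and which I expect to be the main obstacle, is justifying the interchange of the infinite sum in $j$ with the integral, together with the term-by-term handling of the nested double series in $j$ and $k$. Because $\Opgen$ and $T_m$ are fixed finite matrices, all series involved converge absolutely and uniformly in $\theta\in[0,1]$, so dominated convergence (equivalently, uniform convergence on the compact interval) legitimizes every interchange; I would record this justification explicitly rather than grind out quantitative bounds.
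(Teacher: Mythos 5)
Your proof is correct. Note first that the paper does not prove this proposition at all: it is imported verbatim from Sidje's Expokit paper (Theorem~2 there), so there is no in-paper argument to compare against. What you have produced is a self-contained derivation from ingredients already present in the manuscript, which is arguably more in the spirit of the surrounding sections than the bare citation. Your reduction of the $p\geq 1$ case to $p=0$ via the integral representation~\eqref{defphiint} mirrors exactly the device used in the proof of Theorem~\ref{theorem:upperboundphi}; the Beta-integral computation $\int_0^1(1-\theta)^{p-1}\theta^{j+k}\,\dd\theta=(j+k)!\,(p-1)!/(j+k+p)!$ correctly shifts $\phif_j$ to $\phif_{j+p}$ (the total $\theta$-power $1+k+(j-1)=j+k$ is bookkept correctly), and the reindexing gives~\eqref{generalizeerrorSaadsum1}. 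For the corrected approximation, your identity $(\Tcor_m)^k e_1=\bigl(T_m^k e_1,\ \tau_{m+1,m}\,e_m^\ast T_m^{k-1}e_1\bigr)^\ast$ for $k\geq 1$ is easily verified by induction from the block structure of~\eqref{definetcor}, and the resulting generalization of~\eqref{computephi1} identifies the correction with precisely the $j=p+1$ summand of~\eqref{generalizeerrorSaadsum1}, yielding~\eqref{generalizeerrorSaadsum2}. The convergence issues are indeed trivial here since all series are entire functions of fixed finite matrices, so your appeal to uniform convergence on $[0,1]$ suffices. The only cosmetic remark is that your argument takes Proposition~\ref{prop:saad92} (Saad's $p=0$ expansion) as given, which is consistent with the paper's own treatment of that result as known.
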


The following remark will be used later on.
\begin{remark}\label{rmk:PhiAsymptoticCorrect}
From the representation~\eqref{defphiseries} for the $\phif_j$
together with~\eqref{eTe} and~\eqref{gammam} we observe
\begin{equation} \label{em-phi-e1}
\begin{aligned}
e_m^\ast \phif_j(\sca t T_m) e_1 &=
%\sum_{k=m-1}^{\infty} \frac{(\sca t)^k e_m^\ast T_m^k\,e_1}{(k+j)!}
\frac{(\sca t)^{m-1} e_m^\ast T_m^{m-1}\,e_1}{(m-1+j)!}
+ \mathcal{O}(t^m)
= \gamma_m \,\frac{(\sca t)^{m-1}}{(m-1+j)!}
+ \mathcal{O}(t^m).
\end{aligned}
\end{equation}
By~\eqref{em-phi-e1} we observe $ e_m^\ast\phif_{j}(\sca\,tT_m)\,e_1 = \mathcal{O}(t^{m-1}) $ for $ j \geq 0 $
and we conclude that the asymptotically leading order term of $ L^p_m(t)v $ for $ t \to 0 $ is obtained by the leading term ($j=p+1$) of the series~\eqref{generalizeerrorSaadsum1}:
\begin{subequations}
\begin{equation}\label{Lpleadingtermsum}
L^p_m(t)v = \tau_{m+1,m} \sca\,t (e_m^\ast \phif_{p+1}(\sca\,t\,T_m) e_1 ) v_{m+1} + \mathcal{O}(t^m).
\end{equation}
Analogously we obtain the asymptotically leading order term of $ L^{p,+}_m(t)v $ for $ t \to 0 $ by the leading term ($j=p+2$) of the series~\eqref{generalizeerrorSaadsum2}:
\begin{equation}\label{Lpplusleadingtermsum}
L^{p,+}_m(t)v = \tau_{m+1,m} (\sca\,t)^2 (e_m^\ast \phif_{p+2}(\sca\,t\,T_m) e_1) \,\Opgen v_{m+1} + \mathcal{O}(t^{m+1}).
\end{equation}
\end{subequations}
The asymptotically leading terms in~\eqref{Lpleadingtermsum} and~\eqref{Lpplusleadingtermsum} can be used as error estimators:
\begin{subequations}
\begin{equation}\label{LPnextphierrorest}
\|L^p_m(t)v\|_2 \approx \tau_{m+1,m}\,t |e_m^\ast \phif_{p+1}(\sca\,t\,T_m) e_1|
\end{equation}
and
\begin{equation}\label{LPplusnextphierrorest}
\|L^{p,+}_m(t)v\|_2 \approx \|\Opgen v_{m+1}\|_2 \tau_{m+1,m}\,t^2 |e_m^\ast \phif_{p+2}(\sca\,t\,T_m) e_1|.
\end{equation}
\end{subequations}
The error estimators~\eqref{LPnextphierrorest} and~\eqref{LPplusnextphierrorest} are already suggested in \cite{sidje98,niewri12}.
We will refer to them as $\Erone$ in the context of the $\phif$-functions with standard and corrected Krylov approximation,
generalizing the corresponding quantities for the exponential case $p=0$.
\end{remark}

We also obtain \tjnew{true} upper bounds for the matrix exponential
($p=0$) and general $\phif$-functions with $p \geq 1$.

\begin{theorem}\label{theorem:asymerrorpricorrected}
The error of the corrected Krylov approximation~\eqref{generalizeerrornotation2}
to $ \phif_p(\sca\,t \Opgen)v $ with $ p \geq 0 $ satisfies
\begin{subequations}
\begin{equation}\label{leadingtermphijcor}
L^{p,+}_m(t)v  = \tau_{m+1,m}\,\gamma_m \frac{(\sca\, t)^{m+1}}{(m+p+1)!} \, \Opgen v_{m+1} + \mathcal{O}(t^{m+2}).
\end{equation}
Furthermore, \tjnew{in the nonexpansive case} its norm is bounded by
\begin{equation}\label{eresphijcor}
\|L^{p,+}_m(t)v\|_2 \leq    \|\Opgen v_{m+1}\|_2\, \tau_{m+1,m}\,\gamma_m \frac{t^{m+1}}{(m+p+1)!},
\end{equation}
\end{subequations}
and this bound is asymptotically correct for $ t \to 0 $.
\end{theorem}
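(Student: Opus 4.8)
The plan is to prove the three assertions in order: first the leading-order expansion \eqref{leadingtermphijcor}, then the rigorous bound \eqref{eresphijcor}, and finally the claim of asymptotic correctness, which will follow from the first two exactly as in Proposition~\ref{prop:prop4}. The expansion \eqref{leadingtermphijcor} is the cheap part: I would start from the series \eqref{generalizeerrorSaadsum2} of Proposition~\ref{prop:generalSaadsum} and isolate its leading summand $j=p+2$, as already recorded in \eqref{Lpplusleadingtermsum}. Inserting \eqref{em-phi-e1} with $j=p+2$ (so that $m-1+j=m+p+1$) replaces the factor $e_m^\ast\phif_{p+2}(\sca\,t\,T_m)e_1$ by $\gamma_m(\sca\,t)^{m-1}/(m+p+1)!+\mathcal{O}(t^m)$; multiplying by the prefactor $\tau_{m+1,m}(\sca\,t)^2\,\Opgen v_{m+1}$ yields \eqref{leadingtermphijcor}, with the $\mathcal{O}(t^{m+2})$ remainder absorbing both the higher-order part of the $j=p+2$ term and the contributions of $j\geq p+3$.

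The heart of the matter is the bound \eqref{eresphijcor}, which I would establish first for $p=0$ (the corrected exponential) by a defect argument mirroring Section~\ref{sec:theorypart}. With $\Scor_m(s)v=\Vcor_m\,\ee^{\sca\,s\,\Tcor_m}e_1$ and defect $\Dcor_m(s)=\sca\,\Opgen\,\Scor_m(s)-\Scor_m'(s)$, the representation $L_m^+(t)v=\int_0^t E(t-s)\,\Dcor_m(s)v\,\dd s$ holds because $L_m^+(0)v=0$. The decisive simplification is the extended Krylov relation $\Opgen\,\Vcor_m-\Vcor_m\Tcor_m=\Opgen v_{m+1}\,e_{m+1}^\ast$ (with $e_{m+1}\in\C^{m+1}$ the last canonical unit vector), which is immediate from \eqref{krylovidentity} and the block form \eqref{definetcor}. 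Combined with the last component of \eqref{computephi1}, this gives the compact form $\Dcor_m(s)v=\sca^2\,\tau_{m+1,m}\,s\,\big(e_m^\ast\phif_1(\sca\,s\,T_m)e_1\big)\,\Opgen v_{m+1}$, in which only the single power $\Opgen v_{m+1}$ occurs; this is precisely what produces the factor $\|\Opgen v_{m+1}\|_2$ rather than higher powers of $\Opgen$. In the nonexpansive case $\|E(t-s)\|_2\leq 1$, so $\|L_m^+(t)v\|_2\leq\|\Opgen v_{m+1}\|_2\,\tau_{m+1,m}\int_0^t s\,|e_m^\ast\phif_1(\sca\,s\,T_m)e_1|\,\dd s$.

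The main obstacle is then the pointwise estimate of $|e_m^\ast\phif_1(\sca\,s\,T_m)e_1|$, i.e.\ the $\phif_1$-analogue of Proposition~\ref{prop:prop1}. I would repeat that proof verbatim with $f_s$ replaced by $g_s(\lambda)=\phif_1(\sca\,s\,\lambda)$: the Newton/Hermite representation gives $e_m^\ast\phif_1(\sca\,s\,T_m)e_1=g_s[\lambda_1,\ldots,\lambda_m]\,\gamma_m$, and \cite[(B.28)]{higham08} bounds the divided difference by $\max_{z\in\Omega}|D^{(m-1)}g_s(z)|/(m-1)!$. The new ingredient is controlling $D^{(m-1)}g_s(\lambda)=(\sca\,s)^{m-1}\phif_1^{(m-1)}(\sca\,s\,\lambda)$ on the left half-plane; differentiating the integral representation \eqref{defphiint} gives $\phif_1^{(m-1)}(z)=\int_0^1\theta^{m-1}\ee^{\theta z}\,\dd\theta$, so $\mathrm{Re}(\sca\,s\,\lambda_j)\leq 0$ and $|\sca|=1$ yield $|\phif_1^{(m-1)}(\sca\,s\,\lambda)|\leq 1/m$ and hence $|e_m^\ast\phif_1(\sca\,s\,T_m)e_1|\leq\gamma_m\,s^{m-1}/m!$. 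Integrating $\int_0^t s\cdot s^{m-1}/m!\,\dd s=t^{m+1}/(m+1)!$ then establishes \eqref{eresphijcor} for $p=0$.

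For $p\geq 1$ I would bootstrap through the integral representation \eqref{defphiint} exactly as in the proof of Theorem~\ref{theorem:upperboundphi}: applying it to $\Tcor_m$ and to $\sca\,t\,\Opgen$ gives $L_m^{p,+}(t)v=\frac{1}{(p-1)!}\int_0^1(1-\theta)^{p-1}L_m^+(\theta\,t)v\,\dd\theta$, where $L_m^+$ is the just-bounded corrected exponential error. Inserting the $p=0$ bound and evaluating the Beta integral $\int_0^1(1-\theta)^{p-1}\theta^{m+1}\,\dd\theta=(m+1)!\,(p-1)!/(m+p+1)!$ collapses the constant to $t^{m+1}/(m+p+1)!$, which is \eqref{eresphijcor}. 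Finally, asymptotic correctness follows verbatim from the Proposition~\ref{prop:prop4} argument: since $\tau_{m+1,m},\gamma_m\geq 0$ and $|\sca|=1$, the norm of the leading term in \eqref{leadingtermphijcor} equals the right-hand side of \eqref{eresphijcor}, and the reverse triangle inequality bounds the deviation of $\|L_m^{p,+}(t)v\|_2$ from it by the norm of the $\mathcal{O}(t^{m+2})$ remainder.
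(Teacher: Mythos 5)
Your proposal is correct, and for the central bound \eqref{eresphijcor} it takes a genuinely different route from the paper. The paper never forms the defect of the corrected approximation: it instead reads off the algebraic identity $L^{p,+}_m(t)v=\sca\,t\,\Opgen\,L^{p+1}_m(t)v$ directly from the two series in Proposition~\ref{prop:generalSaadsum}, writes $L^{p+1}_m$ via the $\phif_{p+1}$ integral representation and then $L_m(\theta t)v$ via \eqref{LDint}, and so lands on a double integral of $|\delta_m|$ that is controlled by the already-available Proposition~\ref{prop:prop1}; the factor $\|\Opgen v_{m+1}\|_2$ enters because the single $\Opgen$ in the identity acts on the $v_{m+1}$ inside the defect. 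You instead build the extended Krylov relation $\Opgen\,\Vcor_m-\Vcor_m\Tcor_m=\Opgen v_{m+1}e_{m+1}^\ast$, obtain the compact defect $\Dcor_m(s)v=\sca^2\tau_{m+1,m}\,s\,(e_m^\ast\phif_1(\sca\,s\,T_m)e_1)\,\Opgen v_{m+1}$ for $p=0$, and pay for this with a new lemma: a $\phif_1$-analogue of Proposition~\ref{prop:prop1} giving $|e_m^\ast\phif_1(\sca\,s\,T_m)e_1|\le\gamma_m\,s^{m-1}/m!$ via the divided-difference representation and $|\phif_1^{(m-1)}(z)|\le 1/m$ on the left half-plane; the bootstrap to $p\ge1$ through $L_m^{p,+}(t)v=\frac{1}{(p-1)!}\int_0^1(1-\theta)^{p-1}L_m^+(\theta t)v\,\dd\theta$ and the Beta integral then reproduces the constant $1/(m+p+1)!$ exactly. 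All the individual steps check out (the extended Krylov relation, the use of \eqref{computephi1} for the last component, the half-plane bound on $\phif_1^{(m-1)}$, and the Beta evaluation $\int_0^1(1-\theta)^{p-1}\theta^{m+1}\dd\theta=(m+1)!\,(p-1)!/(m+p+1)!$). What your route buys is a self-contained defect-based derivation that exhibits where the single factor $\Opgen v_{m+1}$ comes from structurally; what the paper's route buys is economy, since it reuses Proposition~\ref{prop:prop1} verbatim and needs no new divided-difference estimate. The treatment of \eqref{leadingtermphijcor} and of asymptotic correctness coincides with the paper's.
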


\begin{proof}
Applying~\eqref{em-phi-e1} (with $j=p+2$) to~\eqref{Lpplusleadingtermsum} shows~\eqref{leadingtermphijcor}:
\begin{equation*}
L^{p,+}_m(t)v = \tau_{m+1,m}\, \gamma_m\, \frac{(\sca\,t)^{m+1}}{(m+p+1)!} \,\Opgen v_{m+1} + \mathcal{O}(t^{m+2}).
\end{equation*}
%{Proof directly holds for the case $p=0$, because we use inequalities for error $\phi_{p+1}$, error estimate is only applied to $\phi_j$ function with $j\geq 1$.}
From Proposition~\ref{prop:generalSaadsum} we observe
\begin{align*}
L^{p,+}_m(t)v = \sigma\,t\,\Opgen \,\,L^{p+1}_m(t)v.
\end{align*}
Using the integral representation analogously as in the proof of Theorem~\ref{theorem:upperboundphi} for $L^{p+1}_m(t)v$ and
formula~\eqref{LDint} for $L_m(t)v$, we obtain
\begin{align*}
L^{p,+}_m(t)v &= \sigma\,t\,\Opgen\,\, L^{p+1}_m(t)v = \tau_{m+1,m} \sigma\,t \frac{1}{p!} \int_0^1 (1-\theta)^{p} \Opgen\, L_m(\theta\,t)v \,\dd \theta\\
&= \tau_{m+1,m} \sigma\,t \frac{1}{p!} \int_0^1 (1-\theta)^{p} \int_0^{\theta t} \ee^{\sca (\theta t-s) \Opgen} \Opgen v_{m+1} \delta_m( s)\,\dd s \,\dd \theta.
\end{align*}
With norm inequalities \tjnew{(note the nonexpansive case)} and Proposition~\ref{prop:prop1} we obtain
\begin{align*}
\|L^{p,+}_m(t)v\|_2 &\leq \tau_{m+1,m} \,t\,\|\Opgen v_{m+1}\|_2 \frac{1}{p!} \int_0^1 (1-\theta)^{p} \int_0^{\theta t}  |\delta_m( s)|\,\dd s \,\dd \theta\\
&\leq \|\Opgen v_{m+1}\|_2 \tau_{m+1,m} \gamma_m\, t^{m+1}\,\frac{1}{p!\,m!} \int_0^1 (1-\theta)^{p} \theta ^m\,\dd s \,\dd \theta\\
&= \|\Opgen v_{m+1}\|_2  \tau_{m+1,m} \gamma_m \frac{t^{m+1}}{(m+p+1)!},
\end{align*}
which proves~\eqref{eresphijcor}. 
\tjnew{Proposition~\ref{prop:prop1} is applied here in the nonexpansive case,
see also the proof of Theorem \ref{theorem:upperboundexp}.}\hfill\qed
\end{proof}

If the error estimate~\eqref{eresphijcor}
is to be evaluated, the effort of the computation of $\|\Opgen v_{m+1}\|_2$
is comparable to one additional step of the \tjnew{Krylov} iteration.

As mentioned before, we also can show that for Hermitian $\sca\Opgen$
the estimate $\Erone$ gives a \tjnew{true} upper bound:

\begin{proposition}\label{upperboundphi}
\tjnew{For the nonexpansive case} with $\sca=1$ and \tjnew{a Hermitian} matrix $\Opgen$ we obtain
\begin{equation*}
|\delta_m(t)| = \delta_m(t) > 0 ~~~\text{for}~~t > 0.
\end{equation*}
This leads to the following upper bounds for the errors $L^p_m$ and $L^{p,+}_m$ with $p\geq 0$:
\begin{subequations}
\begin{equation}\label{hermitianphiestimate1}
\|L^p_m(t)v\|_2 \leq \tau_{m+1,m} \, t \, \underbrace{e_m^\ast \, \phif_{p+1}(tT_m) \, e_1}_{\geq\, 0}
\end{equation}
and
\begin{equation}\label{hermitianphiestimate2}
\|L^{p,+}_m(t)v\|_2 \leq \|\Opgen v_{m+1}\|_2 \,\tau_{m+1,m} \, t^2 \, \underbrace{e_m^\ast \, \phif_{p+2}(tT_m) \, e_1}_{\geq\, 0}.
\end{equation}
\end{subequations}
\end{proposition}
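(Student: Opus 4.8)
The plan is to first establish the pointwise positivity $\delta_m(t)>0$ for $t>0$, and then feed this positivity into the defect-based error integrals, replacing the absolute value $|\delta_m|$ that appears in the general estimate~\eqref{errorapprox1} by $\delta_m$ itself, which can then be identified with an $e_m^\ast\phif_\bullet(tT_m)e_1$ term. For the positivity I would invoke the divided-difference representation from Proposition~\ref{prop:prop1}, $\delta_m(t)=\divdif{\lambda_1,\ldots,\lambda_m}{f_t}\,\gamma_m$. With $\sca=1$ and $A$ Hermitian, the Lanczos process produces a real symmetric tridiagonal $T_m$ with strictly positive subdiagonal entries, so $\gamma_m>0$ by~\eqref{gammam} and all eigenvalues $\lambda_j$ of $T_m$ are real. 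Since $f_t^{(m-1)}(\lambda)=t^{m-1}\ee^{t\lambda}>0$ for every real $\lambda$ and $t>0$, the Genocchi--Hermite integral formula (\cite[App.~B.16]{higham08}) writes $\divdif{\lambda_1,\ldots,\lambda_m}{f_t}$ as an integral of this strictly positive integrand over the standard simplex, hence it is strictly positive; thus $\delta_m(t)>0$ and $|\delta_m(t)|=\delta_m(t)$.

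For the exponential case ($p=0$) I would return to~\eqref{LDint},~\eqref{Dm(s)v}: taking norms with $\|E(t-s)\|_2\leq 1$ (nonexpansive case) and using $|\delta_m(s)|=\delta_m(s)$ gives $\|L_m(t)v\|_2\leq\tau_{m+1,m}\int_0^t\delta_m(s)\,\dd s$. Since $\phif_1(tT_m)=\int_0^1\ee^{\theta tT_m}\,\dd\theta=\tfrac1t\int_0^t\ee^{sT_m}\,\dd s$ by~\eqref{defphiint}, this integral collapses to $\int_0^t\delta_m(s)\,\dd s=t\,e_m^\ast\phif_1(tT_m)e_1$, which is exactly~\eqref{hermitianphiestimate1} with $p=0$; the positive integrand $\delta_m(s)$ also makes the quantity nonnegative, justifying the underbrace.

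For $p\geq1$ I would start from the integral representation derived in the proof of Theorem~\ref{theorem:upperboundphi}, namely $L^p_m(t)v=\tfrac{1}{(p-1)!}\int_0^1(1-\theta)^{p-1}L_m(\theta t)v\,\dd\theta$, take norms, insert the $p=0$ bound at time $\theta t$, and reduce the resulting double integral. The cleanest reduction uses $e_m^\ast\phif_{p+1}(tT_m)e_1=\tfrac{1}{p!}\int_0^1(1-\theta)^p\delta_m(\theta t)\,\dd\theta$, which follows from~\eqref{defphiint} and simultaneously shows $e_m^\ast\phif_{p+1}(tT_m)e_1>0$; a short Beta-integral bookkeeping (equivalently, termwise via~\eqref{defphiseries} and~\eqref{eTe}, using $e_m^\ast T_m^k e_1=0$ for $k\leq m-2$) confirms that the bound equals $\tau_{m+1,m}\,t\,e_m^\ast\phif_{p+1}(tT_m)e_1$, giving~\eqref{hermitianphiestimate1}. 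For the corrected bound I would use $L^{p,+}_m(t)v=\sca\,t\,A\,L^{p+1}_m(t)v$ from Proposition~\ref{prop:generalSaadsum}; since $A$ commutes with $E(t-s)=\ee^{(t-s)A}$ and $\|E\|_2\leq1$, pulling $\|A v_{m+1}\|_2$ out of the integral and repeating the reduction (now landing on $\phif_{p+2}$) yields~\eqref{hermitianphiestimate2}, with positivity of the underbraced term from $e_m^\ast\phif_{p+2}(tT_m)e_1=\tfrac{1}{(p+1)!}\int_0^1(1-\theta)^{p+1}\delta_m(\theta t)\,\dd\theta>0$.

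The main obstacle is the positivity of $\delta_m$: everything downstream is routine once the absolute value can be dropped, and it is exactly Hermiticity---the real spectrum of $T_m$ together with $\gamma_m>0$---that forces the divided difference, and hence $\delta_m$, to be positive. In the general non-normal case $\delta_m$ oscillates in sign, $|\delta_m|$ cannot be identified with a single $\phif$-function evaluation, and this sharpened bound fails; this is the precise point at which the argument specializes to the Hermitian setting.
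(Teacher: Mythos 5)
Your proposal is correct and follows essentially the same route as the paper's proof: positivity of $\delta_m$ from the divided-difference representation of Proposition~\ref{prop:prop1} with real nodes and $\gamma_m>0$, then dropping the absolute value in the defect integrals and collapsing them to $\phif_{p+1}$ and $\phif_{p+2}$ evaluations. The only cosmetic differences are that the paper invokes the mean-value form $\divdif{\lambda_1,\ldots,\lambda_m}{f_t}=f_t^{(m-1)}(\xi)/(m-1)!$ where you use the Genocchi--Hermite simplex integral (both from \cite[App.~B.16]{higham08}), and that the paper evaluates the resulting Beta-type integral termwise via the series~\eqref{defphiseries} where you use the equivalent Fubini identity $e_m^\ast\phif_{p+1}(tT_m)e_1=\tfrac{1}{p!}\int_0^1(1-\theta)^p\delta_m(\theta t)\,\dd\theta$.
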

\begin{proof}
For \tjnew{a Hermitian} matrix $\Opgen$ we obtain a symmetric, tridiagonal matrix $T_m$ with distinct, real eigenvalues via Lanczos approximation, see~\cite[Chap. 3.1]{horn85}.
By Proposition~\ref{prop:prop1} we observe
\begin{equation*}
\delta_m(t) = \divdif{\lambda_1,\ldots,\lambda_m}{f_t} \gamma_m
\end{equation*}
with $f_t(\lambda)=\ee^{t\,\lambda}$ for the case $\sca=1$.
For divided differences of real-valued functions over real nodes we obtain $\divdif{\lambda_1,\ldots,\lambda_m}{f_t} \in \R$ and
\begin{equation}\label{ddreal}
\divdif{\lambda_1,\ldots,\lambda_m}{f_t} = \frac{D^{(m-1)} f_t(\xi)}{(m-1)!} = \frac{t^{m-1} \ee^{t\xi}}{(m-1)!}~~~\text{for}~~\xi\in[\lambda_1,\lambda_m].
\end{equation}
Equation~\eqref{ddreal} shows $\divdif{\lambda_1,\ldots,\lambda_m}{f_t} > 0$ and with $\gamma_m > 0 $ we conclude
\begin{equation*}
\delta_m(t) > 0,~~~\text{and}~~~|\delta_m(t)|=\delta_m(t).
\end{equation*}
We continue with~\eqref{hermitianphiestimate1} in the case $p=0$:
\begin{equation*}
\|L_m(t)v\|_2\leq \tau_{m+1,m} \int_0^t |\delta_m(s)|\,\dd s = \tau_{m+1,m} \int_0^t e_m^\ast\,\ee^{sT_m} \,e_1 \,\dd s = \tau_{m+1,m} t \, e_m^\ast \, \phif_1(tT_m) \, e_1.
\end{equation*}
For the case $p \geq 1$ we start analogously to Theorem~\ref{theorem:upperboundphi}.
Using definition~\eqref{defphiint} for the $\phif$-functions
and resorting to the case $p=0$ we find
\begin{align*}
\| L_m^p(t)v \|_2
&= \frac{1}{(p-1)!} \int_0^1 (1-\theta)^{p-1}  \|L_m(\theta\,t)v\|_2 \,\dd \theta\\
&\leq \tau_{m+1,m}\,t\,\frac{1}{(p-1)!} \int_0^1 (1-\theta)^{p-1} \theta \, e_m^\ast \, \phif_1(\theta\,t\,T_m) \, e_1  \,\dd \theta.
\end{align*}
Evaluation of the integral yields
\begin{equation}\label{partofproofphicor1}
\begin{aligned}
\| L_m^p(t)v \|_2
&\leq \tau_{m+1,m}\,t\,\frac{1}{(p-1)!} \int_0^1 (1-\theta)^{p-1} \theta \, e_m^\ast \, \phif_1(\theta\,t\,T_m) \, e_1  \,\dd \theta\\
&= \tau_{m+1,m}\,t\,\sum_{k=0}^{\infty}\frac{e_m^\ast \,(\,t\,T_m)^k \, e_1}{(p-1)!\,(k+1)!} \int_0^1 (1-\theta)^{p-1} \theta^{k+1} \,     \,\dd \theta \\
&= \tau_{m+1,m}\,t\,\sum_{k=0}^{\infty} \frac{e_m^\ast \,(\,t\,T_m)^k \, e_1}{(p+k+1)!}\\
&= \tau_{m+1,m}\,t\,e_m^\ast \phif_{p+1}(\,t\,T_m) \, e_1.
\end{aligned}
\end{equation}
This shows~\eqref{hermitianphiestimate1}.
To show~\eqref{hermitianphiestimate2} we start analogously to Theorem~\ref{theorem:asymerrorpricorrected}:
\begin{equation*}
\|L^{p,+}_m(t)v\|_2 = \|t\,\Opgen\,L^{p+1}_m(t)v\|_2 \leq \|\Opgen v_{m+1}\|_2\,\tau_{m+1,m} \,t\, \frac{1}{p!} \int_0^1 (1-\theta)^{p} \int_0^{\theta t}  |\delta_m( s)|\,\dd s \,\dd \theta.
\end{equation*}
Using $|\delta_m(s)|=\delta_m(s)$ and evaluating the inner integral by the $\phif_1$ function, we obtain
\begin{equation*}
\|L^{p,+}_m(t)v\|_2 \leq \|\Opgen v_{m+1}\|_2\,\tau_{m+1,m}\,t^2\,\frac{1}{p!} \int_0^1 (1-\theta)^{p} \theta \, e_m^\ast \, \phif_1(\theta\,t\,T_m) \, e_1  \,\dd \theta.
\end{equation*}
Evaluation of the integral analogously to~\eqref{partofproofphicor1},
\begin{equation*}
\|L^{p,+}_m(t)v\|_2 \leq \|\Opgen v_{m+1}\|_2\,\tau_{m+1,m}\,t^2\,e_m^\ast \phif_{p+2}(\,t\,T_m) \, e_1,
\end{equation*}
completes the proof. \hfill\qed
\end{proof}

%%%%%
%%%%%
%%%%%

\section{Defect-based quadrature error estimates revisited}\label{sec:defectquad}
The term on the right-hand side of~\eqref{asy-leading}
is a computable error estimate, which has been investigated more closely
in Section~\ref{sec:theorypart}.
It can also be interpreted in an alternative way.
\wa{To this end we again proceed from the integral representation~\eqref{LDint},
\begin{equation} \label{LDint-1}
L_m(t)v = \int_0^t \underbrace{E(t-s)\,D_m(s)}_{=:\;\Theta_m(s,t)}v\,\dd s.
\end{equation}
Due to $ \| D_m(t)v \| = {\mathcal{O}}(t^{m-1}) $,
\begin{equation*}
\tfrac{\dd^j}{\dd\,s^j}\,D_m(s)v\big|_{s=0} = 0, \quad j=0,\ldots,m-2,
\end{equation*}
and the same is true for the integrand in~\eqref{LDint-1},
\begin{equation*}
\tfrac{\partial^j}{\partial s^j}\,\Theta_m(s,t)v\big|_{s=0} = 0, \quad j=0,\ldots,m-2.
\end{equation*}
Analogously as in~\cite{auzingeretal13a}, this allows
us to approximate~\eqref{LDint-1} by \tjnew{a Hermite quadrature} formula
in the form
\begin{equation} \label{LDint-1-appr}
\int_0^t \Theta_m(s,t)v\,\dd s \approx \frac{t}{m}\,\Theta_m(t,t)v
                                = \frac{t}{m}\,D_m(t)v.
\end{equation}
From~\eqref{Dm-lead},
\begin{equation*}
\frac{t}{m}\,D_m(t)v =
\tau_{m+1,m}\,\gamma_m\frac{(\sigma t)^{m}}{m!}\,v_{m+1}
+ {\mathcal{O}}(t^{m+1}),
\end{equation*}
which is the same as~\eqref{asy-leading}. This means that the
quadrature approximation~\eqref{LDint-1-appr} approximates
the leading error term in an asymptotically correct way.
}
From~\eqref{LDint-1-appr},~\eqref{Dm(s)v} and~\eqref{defdelta} we obtain
\begin{equation}\label{eq:hermitquadnorm}
\|L_m(t)v\|_2 \approx \tau_{m+1,m}\frac{t}{m}|\delta_m(t)|.
\end{equation}

\wa{
The quadrature error in~\eqref{LDint-1-appr}
is $ {\mathcal{O}}(t^{m+1}) $.
It is useful to argue this also in a direct way:
By construction, the Hermite quadrature
formula underlying~\eqref{LDint-1-appr} is of order~$ m $, and
its error has the Peano representation (cf. also~\cite{auzingeretal13a})
\begin{equation} \label{quad-peano}
\frac{t}{m}\,\Theta_m(t,t) - \int_0^t \Theta_m(s,t)v\,\dd s =
\int_0^t \frac{s\,(t-s)^{m-1}}{m!}\,
\tfrac{\partial^{m}}{\partial s^{m}}\,\Theta_m(s,t)v\,\dd s.
\end{equation}
Here,
$ \tfrac{\partial^{m}}{\partial s^{m}}\,\Theta_m(s,t)v
= {\mathcal{O}}(1) $, because
$ \tfrac{\dd^{m}}{\dd s^{m}}\,D_m(s)v = {\mathcal{O}}(1) $
which follows from $ D_m(s)v = {\mathcal{O}}(s^{m-1}) $.
This shows that, indeed, the quadrature error~\eqref{quad-peano}
is $ {\mathcal{O}}(t^{m+1}) $.
Furthermore, a quadrature formula of order $ m+1 $ can be constructed by including
an additional evaluation of
\begin{equation*}
\tfrac{\partial}{\partial s}\,\Theta_m(s,t)v\big|_{s=t} = D_m^{[2]}(t)v,
\quad \text{with} \quad
D_m^{[2]}(t) = \tfrac{\dd}{\dd t} D_m(t) - \sca A\,D_m(t).
\end{equation*}
A routine calculation shows
\begin{equation}\label{eq:improvedHermiteQuad}
\int_0^t \Theta_m(s,t)v\,\dd s =
\frac{2\,t}{m+1}\,D_m(t)v
- \frac{t^2}{m(m+1)}\,D_m^{[2]}(t)v
+ {\mathcal{O}}(t^{m+2}),
\end{equation}
where the error depends on $ \tfrac{\dd^{m+1}}{\dd s^{m+1}}\,D_m(s)v = {\mathcal{O}}(1) $.
This may be considered as an improved error
estimate\footnote{In the setting of~\cite{auzingeretal13a}
                  (higher-order splitting methods) such an improved
                  error estimate was not taken into account since
                  it cannot be evaluated with reasonable effort in that context.}
which can be evaluated using
\begin{equation*}
\tfrac{\dd}{\dd t} D_m(t)v
= \sca^2 \tau_{m+1,m}\,e_m^\ast(T_m\,\ee^{\sca\,t\,T_m})e_1 v_{m+1}.
%&= \sca^2 \tau_{m+1,m}\,e_m^\ast T_m\,(\ee^{\sca\,t\,T_m}e_1) v_{m+1}
%&= \sca^2\,\tau_{m+1,m}\,
%   \frac{t^{m-1}}{(m-1)!}\,(e_m^\ast\,T_m^m\,e_1)\,v_{m+1} + {\mathcal{O}}(t^m).
\end{equation*}
%where $ \ee^{\sca\,t\,T_m}e_1 $ is available from the computed
%Krylov approximation~\eqref{Smv}.
With the solution in the Krylov subspace,
$\ee^{\sca\,t\,T_m} e_1$ with $e_m^\ast \ee^{\sca\,t\,T_m} e_1 = (\ee^{\sca\,t\,T_m} e_1)_m$,
we can compute the derivative of the defect at $\mathcal{O}(1)$ cost,
\begin{align*}
\tfrac{\dd}{\dd t} D_m(t)v
&= \sca^2 \tau_{m+1,m}\,e_m^\ast(T_m\,\ee^{\sca\,t\,T_m})e_1 v_{m+1} \\
&= \sca^2 \tau_{m+1,m}\,\big((T_m)_{m,m}\, (\ee^{\sca\,t\,T_m} e_1)_m + (T_m)_{m,m-1}\, (\ee^{\sca\,t\,T_m} e_1)_{m-1}\big)v_{m+1}.
\end{align*}
Also longer expansions may be considered, for instance
\begin{align*}
&\int_0^t \Theta_m(s,t)v\,\dd s =
\frac{3\,t}{m+2}\,D_m(t)v
- \frac{3\,t^2}{(m+1)(m+2)}\,D_m^{[2]}(t)v \\
& \qquad\qquad {} + \frac{t^3}{m(m+1)(m+2)}\,D_m^{[3]}(t)v
+ {\mathcal{O}}(t^{m+3}),
\quad \text{with} \quad
D_m^{[3]}(t) = \tfrac{\dd}{\dd t} D_m^{[2]}(t) - A\,D_m^{[2]}(t),
\end{align*}
etc.
}
\wa{This alternative way of computing improved error estimates is
worth investigating but will not be pursued further here.}

\paragraph{Quadrature estimate for~\eqref{errorapprox1} revisited.}
\tjnew{To continue from~\eqref{errorapprox1}, the nonexpansive case is required.}
%This section does not cover quadrature of the error via Cauchy's integral formula as given in~\cite{botchevetal13,FGS14a}.
In~\cite{lubich08} it is suggested to use the trapezoidal rule as a practical approximation
to the integral~\eqref{errorapprox1},
\begin{equation}\label{luquadtrapez}
\|L_m(t)v\|_2 \leq \tau_{m+1,m} \int_0^t |\delta_m(s)| \, \dd s \approx \tau_{m+1,m} \tfrac{t}{2} \,|\delta_m(t)|,
\end{equation}
or alternatively the Simpson rule.
Applying Hermite quadrature in~\eqref{errorapprox1} also directly leads to the error estimate~\eqref{eq:hermitquadnorm}.

For a better understanding of the approximation~\eqref{luquadtrapez} we consider the effective order of $|\delta_m(t)|$
as a function of~$ t $. Let us denote $f(t):=|\delta_m(t)|$ and assume $f(t)>0$ in a sufficiently small interval $(0,T]$.
For the Hermitian case this assumption is fulfilled for all $t>0$, see Proposition~\ref{upperboundphi}.

The effective order of the function $ f(t) $ can be understood as the slope of the double-logarithmic function
\begin{align*}
&\xi(\tau)=\ln(f(\ee^{\tau}))~~~\text{with}~~\tau=\ln t, \\
\text{with derivative} \quad &\xi'(\tau)=\frac{f'(\ee^{\tau})\,\ee^{\tau}}{f(\ee^{\tau})}.
\end{align*}
We denote the order by
\begin{equation}\label{def:effectiveorder}
\rho(t)=\frac{f'(t)\,t}{f(t)},
\end{equation}
and obtain
\begin{equation*}
f(t)=\frac{f'(t)\,t}{\rho(t)}.
\end{equation*}
Integration and application of the mean value theorem shows the existence of $t^\ast\in[0,t]$ with
\begin{equation*}
\int_0^t f(s)\,\dd s= \frac{1}{\rho(t^\ast)} \int_0^t f'(s)\,s\,\dd s,
\end{equation*}
and integration by parts gives
\begin{equation*}
\int_0^t |\delta_m(s)| \,\dd s = \frac{t\, |\delta_m(t)|}{1+\rho(t^\ast)}.
\end{equation*}
With the plausible assumption that the order is bounded by $1\leq \tilde{m}\leq \rho(t) \leq m-1 =\rho(0+)$ for $t\in[0,T]$, we obtain
\begin{equation}\label{defintbounds}
\tfrac{t}{m}\,|\delta_m(t)| \leq \int_0^t |\delta_m(s)| \,\dd s \leq \tfrac{t}{\tilde{m}+1}\,|\delta_m(t)| \leq \tfrac{t}{2}\,|\delta_m(t)|.
\end{equation}
The inequalities~\eqref{defintbounds}
show that the error estimate based on the trapezoidal quadrature~\eqref{luquadtrapez} leads to an upper bound of the error.
The error estimate based on Hermite quadrature \eqref{eq:hermitquadnorm} leads to a lower bound of the integral~\eqref{errorapprox1},
which not necessarily leads to a lower bound for the norm of the error.
\tjnew{In contrast to~\eqref{eq:hermitquadnorm}, the error estimate~\eqref{luquadtrapez} is not asymptotically correct for $ t \to 0 $.} 

\begin{remark}\label{remark:quadnew}
With $\rho(0+)=m-1$ and the assumptions that the effective order is slowly decreasing locally at $t=0$ and sufficiently smooth,
we suggest choosing $\tilde{m}=\rho(t)$ for a step of size $t$ \tjnew{to} improve the quadrature based estimate.
\begin{equation}\label{eq:effoquad}
\|L_m(t)v\|_2 \leq \tau_{m+1,m} \int_0^t |\delta_m(s)| \, \dd s \approx \tau_{m+1,m} \tfrac{t}{\rho(t)+1} \,|\delta_m(t)|,
\end{equation}
We will refer to this as \emph{effective order quadrature} estimate.
In the limit $t\to0$ this choice of quadrature is equivalent to the Hermite quadrature and, therefore, asymptotically correct.
\end{remark}

Up to now we did refer to the effective order of the defect $|\delta_m(t)|$. For $t \to 0$ the effective order of the error is given by $\rho(t)+1$.
%%%%%
%%%%%
%%%%%

\section{The matrix exponential as a time integrator.} \label{sec:timeint}
\tjnew{For simplicity we assume the nonexpansive case of~\eqref{exp(itA)v} in this section.}

We recall from~\cite{hoclub97} that superlinear convergence as a function of $ m $, the dimension of the underlying
Krylov space, sets in for
\begin{equation}\label{tcrit}
t\, \|\Opgen\|_2 \lessapprox m.
\end{equation}
%where $|\lambda_{\text{max}}|=\max_{\lambda\in\text{spec}(A)}|\lambda|$.
This relation also affects the error considered as a function of time $t$.
Equation~\eqref{tcrit} can be seen as a very rough estimate for
a choice of $t$ which leads to a systematic error and convergence behavior.
Only for special classes of problems as for instance symmetric negative definite matrices,
the relation~\eqref{tcrit} can be weakened, see~\cite{hoclub97,becrei09} for details.
%\tjnew{Results similar to~\eqref{tcrit} are also improved in~\cite{becrei09}.}
%Equation~\eqref{tcrit} holds for the skew-Hermitian case.
%\tj{Such a convergence behavior can also be observed in our error plots in section~\ref{sec:num} -- TODO: clarify or skip}.

In general a large time step~$t$ would necessitate large $m$ or a restart of the Krylov method.
However, choosing $m$ too large can lead to a deviation from
orthogonality of the Krylov basis due to round-off effects and is not recommended in general.
%A loss of orthogonality may lead to a loss of structure preserving properties of the matrix exponential.
Restarting the Krylov method is thus preferable.
\tjnew{Likewise, when memory issues must be taken into account, a restart is also advisable.}
For the matrix exponential seen as a time propagator, a simple restart is possible.
The following procedure has been introduced
in~\cite{sidje98} and is recapitulated here to fix the notation.

We split the time range $[0,t]$ into $N$ subintervals,
\begin{align*}
0&=t_0<t_1<\ldots<t_N=t, \\
\text{with step sizes}~~\Delta t_j&= t_{j}-t_{j-1},\quad j=1,\ldots,N.
\end{align*}
The exact solution at time $t_j$ is denoted by $ \mv{j} $, whence
\begin{equation*}
\mv{j} = E(\Delta t_{j})\mv{j-1}=E(t_{j})v,\quad\text{with}\quad \mv{0}=v.
\end{equation*}
For simplicity we assume that the dimension $m$ of the Krylov subspace is fixed
over the substeps.
We obtain approximations $ \mw{j }$ to $ \mv{j} $ by applying multiple restarted
Krylov steps, with orthonormal bases $ \Vmj $ and upper Hessenberg matrices $ \Tmj $.
Starting from $ \mw{0} = v $, for $ j=1,\ldots,N $,
\begin{align*}
\mw{j} := \Smj(\Delta t_{j}) \mw{j-1}
       = \Vmj \ee^{\sca\,\Delta t_{j} \Tmj} \big(\Vmj\big)^\ast \mw{j-1}
       = \Vmj \ee^{\sca\,\Delta t_{j} \Tmj}e_1.
\end{align*}
The error matrix in the $j$-th step is denoted by
\begin{equation*}
\Lmj(\Delta t_{j}) := E(\Delta t_{j})-\Smj(\Delta t_{j}),
\end{equation*}
and the accumulated error by
\begin{equation} \label{Lmstar}
L_m^{\star}(t)v=\mv{N}-\mw{N}.
\end{equation}
With
\begin{align*}
\mv{j}-\mw{j} &= E(\Delta t_j)\mv{j-1}-\Smj(\Delta t_j) \mw{j-1} \\
&= E(\Delta t_j)(\mv{j-1}-\mw{j-1}) +
%\big(E(\Delta t_j)-
\Lmj(\Delta t_j) \mw{j-1}
\end{align*}
we obtain
\begin{equation*}
L_m^{\star}(t)v = \sum_{j=1}^N E(\Delta t_N)\cdots E(\Delta t_{j+1})\,\Lmj(\Delta t_j) \mw{j-1}.
\end{equation*}
Recall our premise that $E(\cdot)$ is \wa{nonexpansive}
and assume that the local error is bounded by
\begin{equation}\label{localerror}
\|\Lmj(\Delta t_j)\mw{j}\|_2 \leq  \ttol \cdot \Delta t_j.
\end{equation}
Then,
\begin{equation*}
\|L_m^{\star}(t)v\|_2
\leq \sum_{j=1}^N \|\Lmj(\Delta t_j) \mw{j-1}\|_2 \leq  \ttol \sum_{j=1}^N \Delta t_j  = \ttol \cdot t.
\end{equation*}
The term $\|\Lmj(\Delta t_j)\mw{j}\|_2$ denotes the truncation error of a single substep and is studied in the first part of this paper.
We now apply local error estimates to predict acceptable time steps.

%%%%%
\paragraph{Step size control.}
For a single substep, the error estimate~\eqref{errornew}
suggests a step size to satisfy a given error tolerance $\ttol$ as
\begin{equation}\label{choosestepbyEra}
\Delta t_j = \left(\frac{\ttol\, \, m!}{\tau^{[j]}_{m+1,m}\gamma^{[j]}_m}\right)^{1/m},\quad j=1,\ldots,N.
\end{equation}
For a local error as in~\eqref{localerror}, we replace $\ttol$ by $ (\Delta t_j\,\,\ttol)$ in~\eqref{choosestepbyEra} and obtain
\begin{equation}\label{choosestepbyEralocal}
%\Delta t^{\star} =  \Big(\frac{\Delta t^{\star}\,\,\ttol\, \, m!}{\tau_{m+1,m}\gamma_m}\Big)^{1/m} \quad \Rightarrow \quad
\Delta t_j = \left(\frac{\ttol\, \, m!}{\tau^{[j]}_{m+1,m}\gamma^{[j]}_m}\right)^{1/(m-1)},\quad j=1,\ldots,N.
\end{equation}
%We will refer to the step size given by~\eqref{choosestepbyEralocal} as asymptotic step size.
We remark that $\Delta t_j$ can be computed together with the construction of the Krylov subspace,
therefore, $\tau^{[j]}_{m+1,m}$ and $\gamma^{[j]}_m$ are known values at this point.
For the corrected Krylov approximation $\Scor_m(t)\mv{j}$, see~\eqref{Smbar}, the error estimate given in~\eqref{eresphijcor} ($p=0$) suggests a local step size of
\begin{equation}\label{eq:stepasymptoticcorrected}
\Delta t_j = \left(\frac{\ttol\, \, (m+1)!}{\big\|\Opgen\mvmone{j}\big\|_2\,\tau^{[j]}_{m+1,m}\gamma^{[j]}_m}\right)^{1/m},\quad j=1,\ldots,N.
\end{equation}
The error estimator $\Erone$ and estimates given in Section~\ref{sec:defectquad} cannot be inverted directly
to predict the step size.
Computing a feasible step size is still possible via heuristic step size control.
This approach will be formulated for the error estimate $\Erone$ but can also be used in conjunction with all the error estimates given in Section~\ref{sec:defectquad}.
Ideas of heuristic step size control are given in~\cite{Gust91} in general and~\cite{sidje98} or~\cite{niewri12}
for a Krylov approximation of the matrix exponential.
For a step, with step size $\Delta t_{j-1}$ and estimated error $\Er^{[j-1]}$, a reasonable size for the
subsequent step can be chosen as
\begin{equation}\label{heuristicstepsize}
\Delta t_{j} = \left( \frac{\tjnew{\Delta t_{j}}\,\,\ttol}{\Er^{[j-1]}}\right)^{1/m} \Delta t_{j-1},\quad j=2,\ldots,N.
\end{equation}
In~\eqref{heuristicstepsize} we only need the evaluation of the error estimate for the previously computed step with step size $\Delta t_{j-1}$,
and we can substitute $\Erone$ for $\Er^{[j-1]}$.
%with safety factor $c=0.9$ in~\cite{sidje98} with reference to~\cite{Gust91}.
%\tj{
%For the skew-Hermitian case the safety factor is necessary but only because Expokit is rounding step sizes to 2 digits which leads to too large steps and break down of the method.
%Without safety factors and rounding of the step size we observe good convergence to steps with $\Er^{[j]}(\Delta t^{[j]})=\ttol$.
%Because of the very high order of the method, rounding can lead to large differences.
%}
The heuristic step size control always requires information from the previous step, therefore,
heuristic step size control cannot be used to compute the first time step $\Delta t_1$.
The first step is then usually based on a~priori estimates, which in general do not provide very sharp results.
The first step size in~\cite{sidje98} is chosen as
\begin{equation}\label{heuristicstepsize1}
\Delta t_1 = \frac{1}{\|H\|_{\infty}}\,\left(\frac{\ttol\,\, ((m+1)/\ee)^{m+1}\sqrt{2\,\pi\,(m+1)}}{4\,\|H\|_{\infty}}\right)^{1/m}.
\end{equation}

\begin{remark}\label{reqmark:stepsizeiteration}
%Following ideas can be used to improve the heuristic step size control for the case of $\Opgen$ Hermitian.
In the case of a Hermitian matrix $\Opgen$ the matrix $T_m$ is symmetric, tridiagonal and real-valued which allows cheap and robust computation of its eigenvalue decomposition.
The eigenvalue decomposition of $T_m$ is independent of the step size $\Delta t$ and allows cheap evaluation of $\Erone$ for multiple choices of $\Delta t$.
%Using the eigenvalue decomposition of $T_m$,
%the error estimate $\Erone$ can be computed with linear costs in $m$ for multiple choices of time steps $\Delta t$.
%Low costs on the computation of $\Erone$ for different time steps gives the possibility to correct the time step multiple times and also apply heuristic step size control for the first step.
In this case we can cheaply adapt the choice of $\Delta t_j$ in an iterative manner before continuing to time step $j+1$:
\begin{equation}\label{eq:stepsize2loops}
\begin{aligned}
\Delta t_{j,1}&:= \Delta t_{j-1}~~\text{or result of~\eqref{choosestepbyEralocal}},\\
%\Delta t_{j,1}&:= \Big(\frac{\ttol\, \, m!}{\tau^{[j]}_{m+1,m}\gamma^{[j]}_m}\Big)^{1/(m-1)}~~\text{by}~~\eqref{choosestepbyEralocal},\\
\Delta t_{j,l}&:= \Big( \frac{\tjnew{\Delta t_{j,l}}\,\,\ttol}{\Er^{[j,l-1]}}\Big)^{1/m} \Delta t_{j,l-1},~~~~~ l=2,\ldots,N_j,\\
\Delta t_{j}&:=\Delta t_{j,N_j}.\\
\end{aligned}
\end{equation}
By choosing $\Er^{[j,l-1]}$ as an error estimate for
the Krylov approximation of the $j$-th step
with time step $\Delta t_{j,l-1}$.
For $\Er^{[j,l-1]}$ we can use $\Erone$ or estimates given in Section~\ref{sec:defectquad}.
The aim of the iteration~\eqref{eq:stepsize2loops} is to determine a step size $\Delta t_{j,\infty}$ with $\Er^{[j,\infty]}=\ttol$.
The convergence behavior of iteration~\eqref{eq:stepsize2loops} depends on the structure of the corresponding error estimate.
In our practical examples this iteration converges monotonically in a small number of steps.

This step size iteration can also be used for the case of non-Hermitian $\Opgen$ with the drawback of higher computational cost for the error estimate $\Er^{[j,l-1]}$ for every $j$ and $l$.
\end{remark}

%%%%%%%%%%%%%%%%%%%%%%%%%%%%%%%%%%%%%%%%%%%%%%%%%%%%%%%%
%%%%%%%%%%%%%%%%%%%%%%%%%%%%%%%%%%%%%%%%%%%%%%%%%%%%%%%%

\section{Numerical considerations and examples}\label{sec:num}
\tjnew{
We give an illustration of our theoretical results
for two different skew-Hermitian problems in Subsection~\ref{sec:skew-herm},
a Hermitian problem in Subsection~\ref{sec:numherm}, 
and a non-normal problem in Subsection~\ref{sec:numgen}.
We also compare the performance of different error estimates
for practical step size control (Section~\ref{sec:timeint}) in Subsection~\ref{sec:skew-herm}.
To show that our error estimate~\eqref{asyerrest}
is efficient in practice we also compare
it with results delivered by the standard package Expokit~\cite{sidje98}
and a~priori error estimates.}
%%%%%

\subsection{The skew-Hermitian case}\label{sec:skew-herm}
\tjnew{
For our tests we use different types of matrices.}
\paragraph{Free Schr{\"o}dinger equation.}
We consider
\begin{equation} \label{Hschroe}
H=\tfrac{1}{4}\, \text{tridiag}(-1,2,-1) \in\R^{n\times n},
%\begin{pmatrix}
% 2&-1&  & &\\
%-1& 2&-1& &\\
%  &\ddots&\ddots&\ddots&\\
%  &  &-1 & 2 &-1\\
%  &  &   &-1 & 2
%\end{pmatrix}\in\R^{n\times n},
\end{equation}
with dimension $n=10\,000$.
The matrix $H$ is \tjnew{associated with a
finite difference or finite element discretization of the one-dimensional negative Laplacian}.
%\tjnew{The normalization factor $1/4$ in~\eqref{Hschroe} is chosen such that $ \text{spec}(H) \subseteq (0,1)$.}
%By rescaling time $t$ we can rescale the matrix $H$ arbitrarily.
%Therefore, without loss of generality,
%we are using the scalar factor $1/4$ to scale the eigenvalues of $H$
%such that $\lambda\in(0,1)$ for all $\lambda\in\text{spec}(H)$
%and ~\eqref{tcrit}
With $A=H$ and $\sca=-\ii$, in~\eqref{exp(itA)v} we obtain the free Schr{\"o}dinger equation.
The eigenvalue decomposition of $H$ is well known,
and we can use the discrete sine transform with high precision arithmetic in Matlab to compute the exact solution $E(t)v$, see~\eqref{exp(itA)v}. The starting vector $v$ is chosen randomly. To compute the Krylov subspace approximation $S_m(t)v$, see \eqref{Smv},
we use the eigenvalue decomposition of the tridiagonal matrix $T_m$.

\paragraph{Discrete Hubbard model.}
For the description of the Hubbard model we employ a self-contained notation.
The Hubbard model first appears in~\cite{hubbard63} and was further used in many papers and books, e.g.~\cite{Ma93,PKBS16}.
The Hubbard model is used to describe electron density on a given number of sites, which correspond to Wannier discretization of orbitals,
and spin up or down.
We consider the following Hubbard Hamiltonian, in second quantization and without chemical potential:
\begin{equation}\label{eq.HubHam}
H=\frac{1}{2} \sum_{i,j,\sigma} v_{ij} c_{j\sigma}^{\dagger} c_{i\sigma}
+ \sum_{j,\sigma} U \hat{n}_{j\sigma}\hat{n}_{j\sigma'},
\end{equation}
where $i,j$ sum over the number of sites $\nsit$ and the spins $\sigma,\sigma' \in\{\uparrow,\downarrow \}$ where $\sigma'$ is the opposite spin to $\sigma$.
%The notation is not consistent in the literature: $T_{ij}$ and $I$ is used in~\cite{hubbard63} instead of $v_{ij}$ and $U$.
%For the spin $\sigma$ the notation $\sigma=\pm 1$ is used in~\cite{hubbard63}.
The entries $v_{ij}$ with $i,j=1,\ldots,\nsit$ describe electron hopping from site $i$ to $j$.
In~\eqref{eq.HubHam}, the notation $c_{j\sigma}^\dagger c_{i\sigma}$ describes the $2$nd quantization operator
and $\hat{n}_{j\sigma}=c_{j\sigma}^\dagger c_{j\sigma}$ the occupation number operator.
For details on the notation in~\eqref{eq.HubHam} we can recommend several references, e.g.~\cite{hubbard63,Ja08,Ma93,PKBS16}.

For our tests we model $8$ electrons at $8$ sites ($\nsit=8$) with spin up and down for each site,
this leads to $16$ possible states for electrons.
Such an electron distribution is also referred to as half-filled in the literature.
%In opposite to~\cite[Section 3]{Ja08}
We also restrict our model by considering the number of electrons with spin up and down to be fixed as $\nsit/2$.
This leads to $n=(\text{binomial}(8,4))^2=4900$ considered occupation states which create a discrete basis.
For the numerical implementation of the basis we consider $16$-bit integers
for which each bit describes a position which is occupied in case the bit is equal to $1$ or empty otherwise.
The set of occupation states can be ordered by the value of the integers which leads to a unique representation
of the Hubbard Hamiltonian~\eqref{eq.HubHam} by a matrix $H\in\C^{n\times n}$.
Such an implementation of the Hubbard Hamiltonian is also described in~\cite[Section 3]{Ja08}.

In our test setting we use $U=5$ and parameter-dependent values for electron hopping $v_{ij}=v_{ij}(\omega)\in\C$ with $\omega\in(0,2\pi]$:
\begin{align*}
&v_{11}=v_{88}=-1.75,~~~v_{jj}=-2~~\text{for}~~j=2,\ldots,7,\\
&v_{j,j+1}=\bar{v}_{j+1,j}=-\cos\omega+\ii\,\sin\omega~~\text{for}~~j=1,\ldots,7~~\text{and}~~v_{ij}=0~~~\text{otherwise}.
\end{align*}
For this choice of $v_{ij}(\omega)$ we obtain \tjnew{a Hermitian} matrix $H_\omega\in\C^{n\times n}$
with $43980$ nonzero entries (for a general choice of $\omega$) and $\text{spec}(H_\omega)\subseteq (-19.1,8.3)$.
The spectrum of $H_\omega$ is independent of $\omega$.

A relevant application where the Hubbard Hamiltonian~\eqref{eq.HubHam} is of importance
is the simulation of oxide solar cells with the goal of finding candidates for
new materials promising a gain in the efficiency of the solar cell,
see~\cite{Held2007}.
The study of solar cells considers time-dependent electron hoppings $v_{ij}=v_{ij}(t)$
to model time-dependent potentials
which lead to Hamiltonian matrices $H(t)$.
\tjnew{The time-dependent Hamiltonian can be parameterized via~$\omega$.}
Time propagation of a linear, non-autonomous ODE system
can be approximated by Magnus-type integrators
which are based on one or more evaluations of matrix exponentials
applied to different starting vectors at several times $ t $, see \tjnew{for instance~\cite{blanesetal08b,blamoa05}}.
Our test setting for the Hubbard Hamiltonian with arbitrary $\omega$ is then obtained by~\eqref{exp(itA)v} with the matrix $A=H_\omega$ as described above and $\sca=-\ii$.

{In the following Subsection~\ref{sec:skew-herm} we focus on the skew-Hermitian case. For tests on the Hermitian case see Subsection~\ref{sec:numherm} below.}

\paragraph{Verification of upper \tjnew{error} bound.}
In the following Figures~\ref{fig:numexfreeSchro} and~\ref{fig:numexHub}
we compare the error $\|L_m(t)v\|_2$ with the error estimates
$\Erone$ and $\Era$. %see~\eqref{errorsaadterm1} and~\eqref{errornew}.
Figure~\ref{fig:numexfreeSchro} refers to
the matrix~\eqref{Hschroe} of the free Schr{\"o}dinger problem
and Figure~\ref{fig:numexHub}
to the Hubbard Hamiltonian~\eqref{eq.HubHam} with $\omega=0.123$.
For both cases we show results with Krylov subspace dimensions
$ m=10 $ and $ m=30 $, respectively.

We observe that the error estimate $\Erone$ is a good approximation \tjnew{to the error},
but it is not an upper bound in general.
In contrast, $\Era$ is a \tjnew{proven} upper error bound.
Up to round-off error, for $m=10$ we observe the correct asymptotic behavior of $\Era$ and $\Erone$.
For larger choices of $m$ the asymptotic regime starts
at time steps for which the error is already close to round-off precision.
Therefore, for larger choices of $m$, the Krylov approximation,
as a time integrator, cannot achieve its full order for typical time steps
in double precision.
%We will refer to this effect as a loss of local order.

\tjnew{{The matrix~\eqref{Hschroe}
has been scaled such that $\text{spec}(H)\subseteq(0,1)$ and $\|H\|_2\approx 1$.}
In accordance with~\eqref{tcrit} stagnation of the error
is observed for times $t \lessapprox m$,
see Figure~\ref{fig:numexfreeSchro}.}
%\wa{** Folgender Satz gehoert besser begruendet: **}
%\wa{** ? Ist Err1 asymptotisch korrekt fuer t to 0? **}
%\wa{This leads to the conclusion that the error estimate $\Era$ is a sharper estimate for small choices of $m$. Additionally, the error estimate $\Era$ is sharper for smaller time steps,
%such time steps usually are used if a small error is aimed for.}

%On any tested skew-Hermitian example we observed that the error $\|L_m(t)v\|_2$ behaves concave in a double $\log$ sense, the same holds for the defect $\delta_m(t)$.
%In practice we are interested in time steps for which the error is small enough, depending on the problem this can be $\|L_m(t)v\|_2\approx 10^{-14}$ to $\|L_m(t)v\|_2\approx 10^{-4}$.
%For such steps the local order of the method is lower than the theoretical order $m$. This effect was not always shown for the defect of not skew-Hermitian examples.
%%%%%

\begin{figure}
\centering
\begin{overpic}
[width=0.7\textwidth]{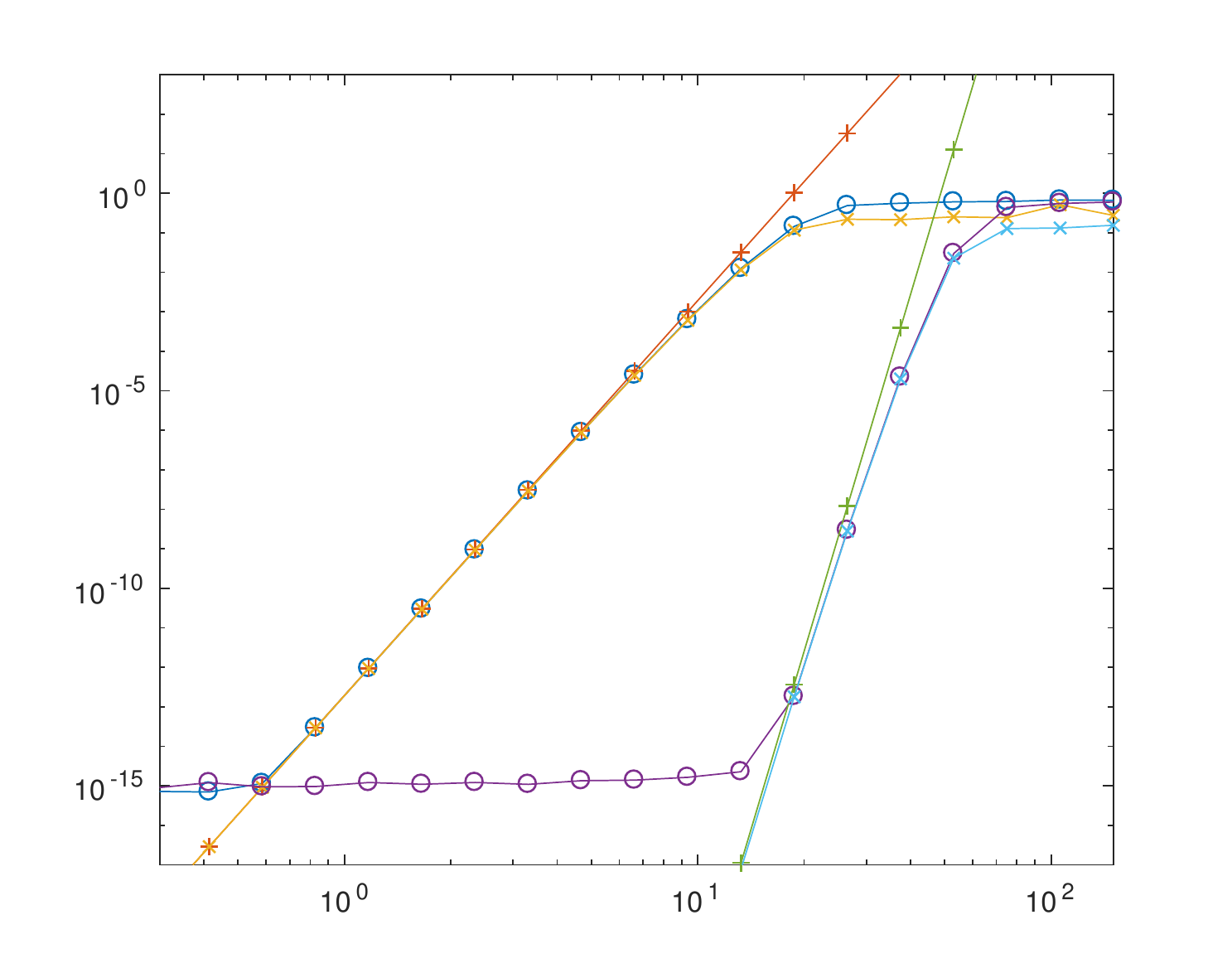}
\put(42,53){$m=10$}
\put(77,53){$m=30$}
\put(50,3){$t$}
\end{overpic}
\caption{Error $\|L_m(t)v\|_2$ ($\circ$) and the error estimates $\Erone$ ($\times$) and $\Era$ ($+$) for the free Schr{\"o}dinger problem and Krylov subspace dimensions $m=10$ and $m=30$.
$\Era$ is an upper bound for the error, and both estimates show the correct asymptotical behavior.
Due to round-off error, for $m=30$ the observed effective order is less clear than for $m=10$.}
\label{fig:numexfreeSchro}
\end{figure}

\begin{figure}
\centering
\begin{overpic}
[width=0.7\textwidth]{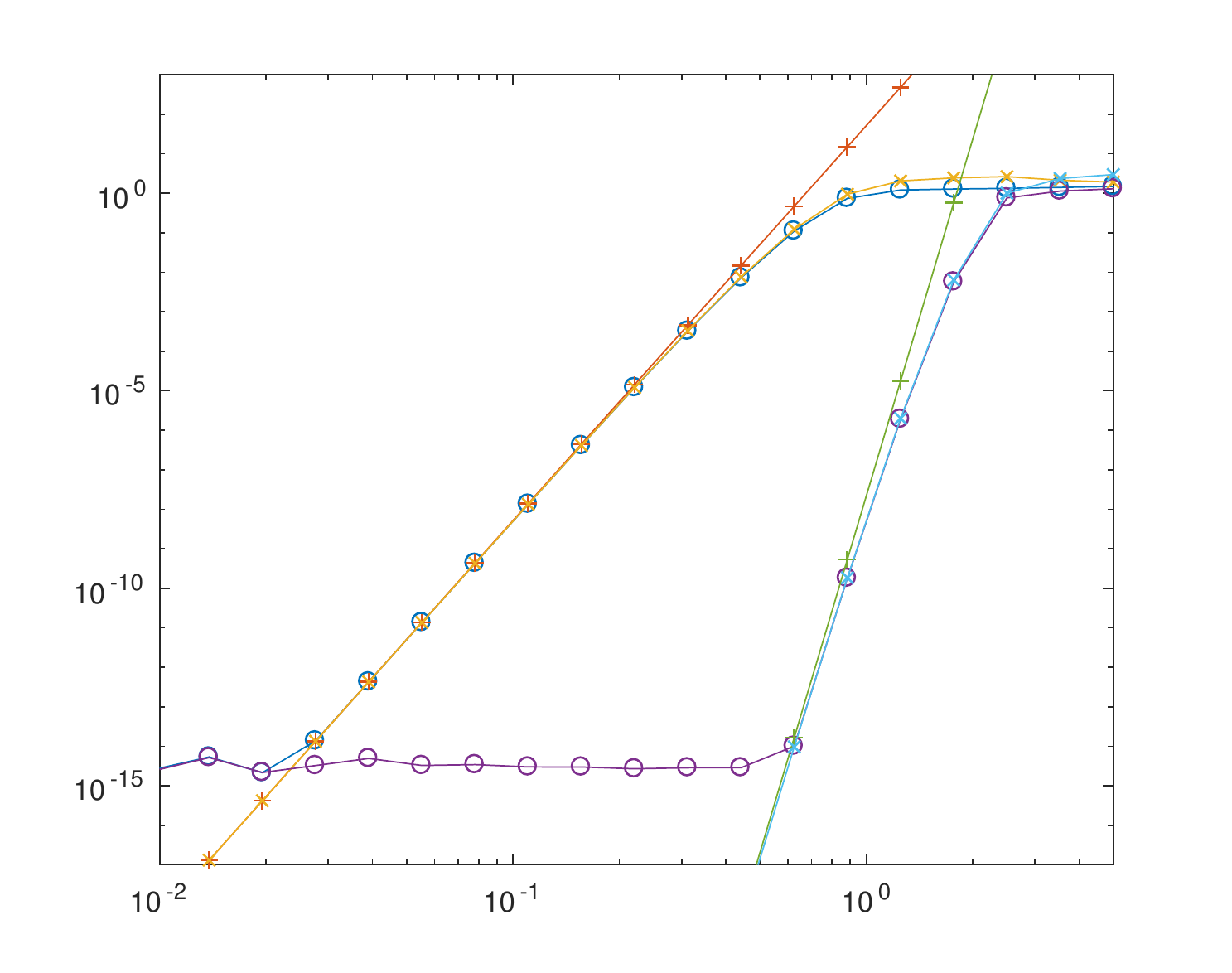}
\put(45,55){$m=10$}
\put(79.5,55){$m=30$}
\put(50,3){$t$}
\end{overpic}
\caption{Error $\|L_m(t)v\|_2$ ($\circ$) and the error estimates $\Erone$ ($\times$) and $\Era$ ($+$)
 for the Hubbard Hamiltonian with $\omega=0.123$
 and Krylov subspace dimensions $m=10$ and $m=30$.
This shows the same behavior as in Figure~\ref{fig:numexfreeSchro}.}
\label{fig:numexHub}
\end{figure}

%\FloatBarrier
We verify the error estimates in the skew-Hermitian setting of the free Schr{\"o}dinger equation~\eqref{Hschroe}
for the standard Krylov approximation of the $\phif_1$ function in Figure~\ref{fig:numexfreeSchrophi1}
and the corrected Krylov approximation of the matrix exponential function in Figure~\ref{fig:numexfreeSchrocor}.
In Figure~\ref{fig:numexfreeSchrophi1} the error estimator $\Erone$ refers to formula~\eqref{LPnextphierrorest}
and $\Era$ shows the \tjnew{upper error bound}~\eqref{eresphij} from Theorem~\ref{theorem:upperboundphi}, both for the case $p=1$.
In Figure~\ref{fig:numexfreeSchrocor}, $\Erone$ is from formula~\eqref{LPplusnextphierrorest}
and $\Era$ denotes the \tjnew{upper error bound}~\eqref{eresphijcor} from Theorem~\ref{theorem:asymerrorpricorrected}, both for the case $p=0$.
%\tj{$\phi_1$ $\Erone$ \eqref{LPnextphierrorest} $p=1$, $\Era$ Theorem~\ref{theorem:upperboundphi} eq~\eqref{eresphij} $p=1$
%corrected $\Erone$ \eqref{LPplusnextphierrorest} $p=0$, $\Era$ Theorem~\ref{theorem:asymerrorpricorrected} eq~\eqref{eresphijcor} $p=0$.}
\begin{figure}
\centering
\begin{overpic}
[width=0.7\textwidth]{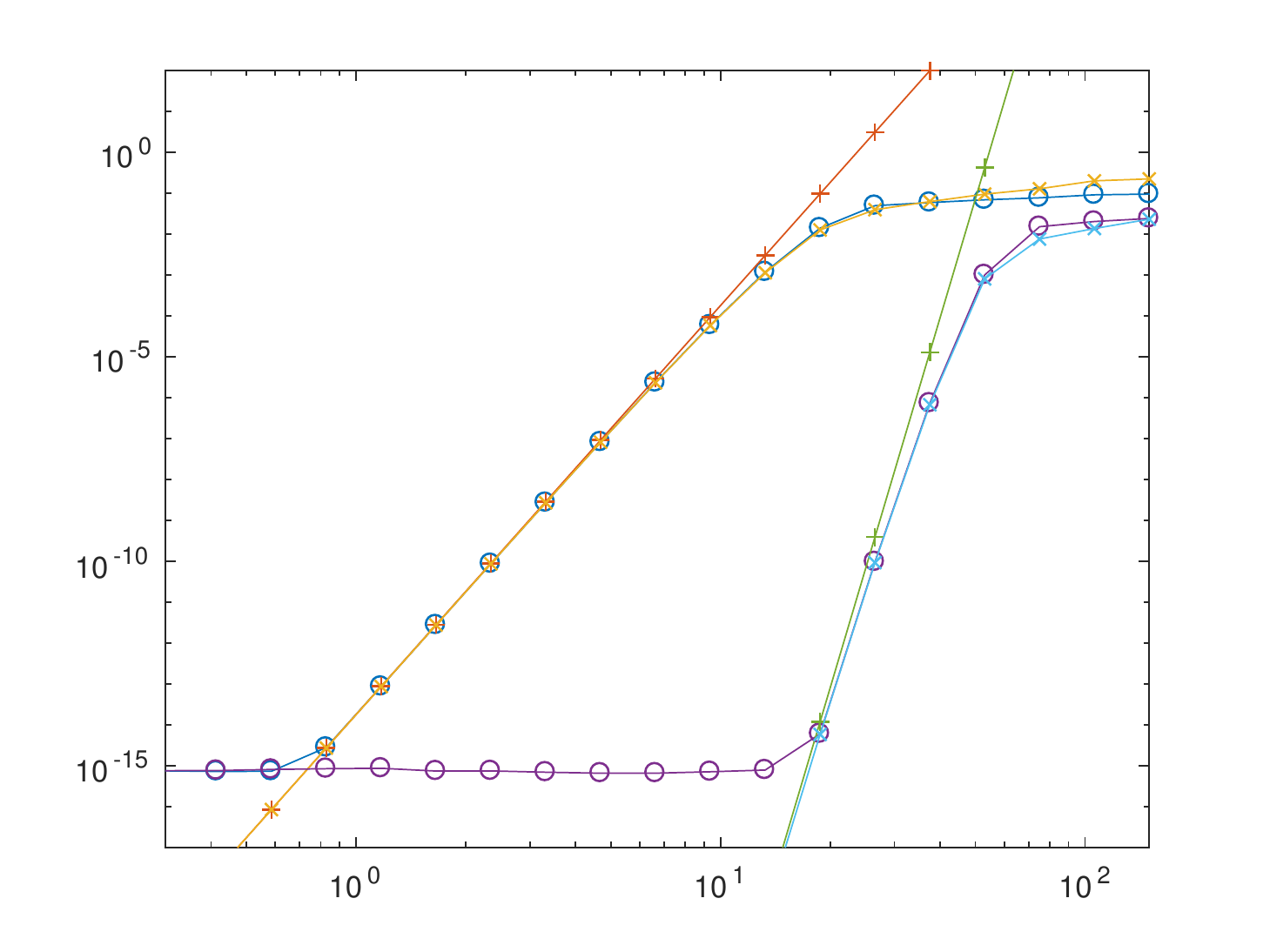}
\put(43,51){$m=10$}
\put(79,51){$m=30$}
\put(50,3){$t$}
\end{overpic}
\caption{Error $\|L_m^1(t)v\|_2$ ($\circ$) and the error estimates $\Erone$ ($\times$) and $\Era$ ($+$) for the free Schr{\"o}dinger problem and Krylov subspace dimension $m=10$ and $m=30$.}
%The error $L_m^1(t)v$ denotes the error of the standard Krylov approximation applied to the $\phi_1$ function, see~\eqref.
%The error estimate $\Era$ is given in~\eqref{LPnextphierrorest} for $p=1$ and does not lead to an upper bound in general.
%The asymptotical correctness of $\Era$ ~\eqref{eresphij} from Theorem~\ref{theorem:upperboundphi}
\label{fig:numexfreeSchrophi1}
\end{figure}

\begin{figure}
\centering
\begin{overpic}
[width=0.7\textwidth]{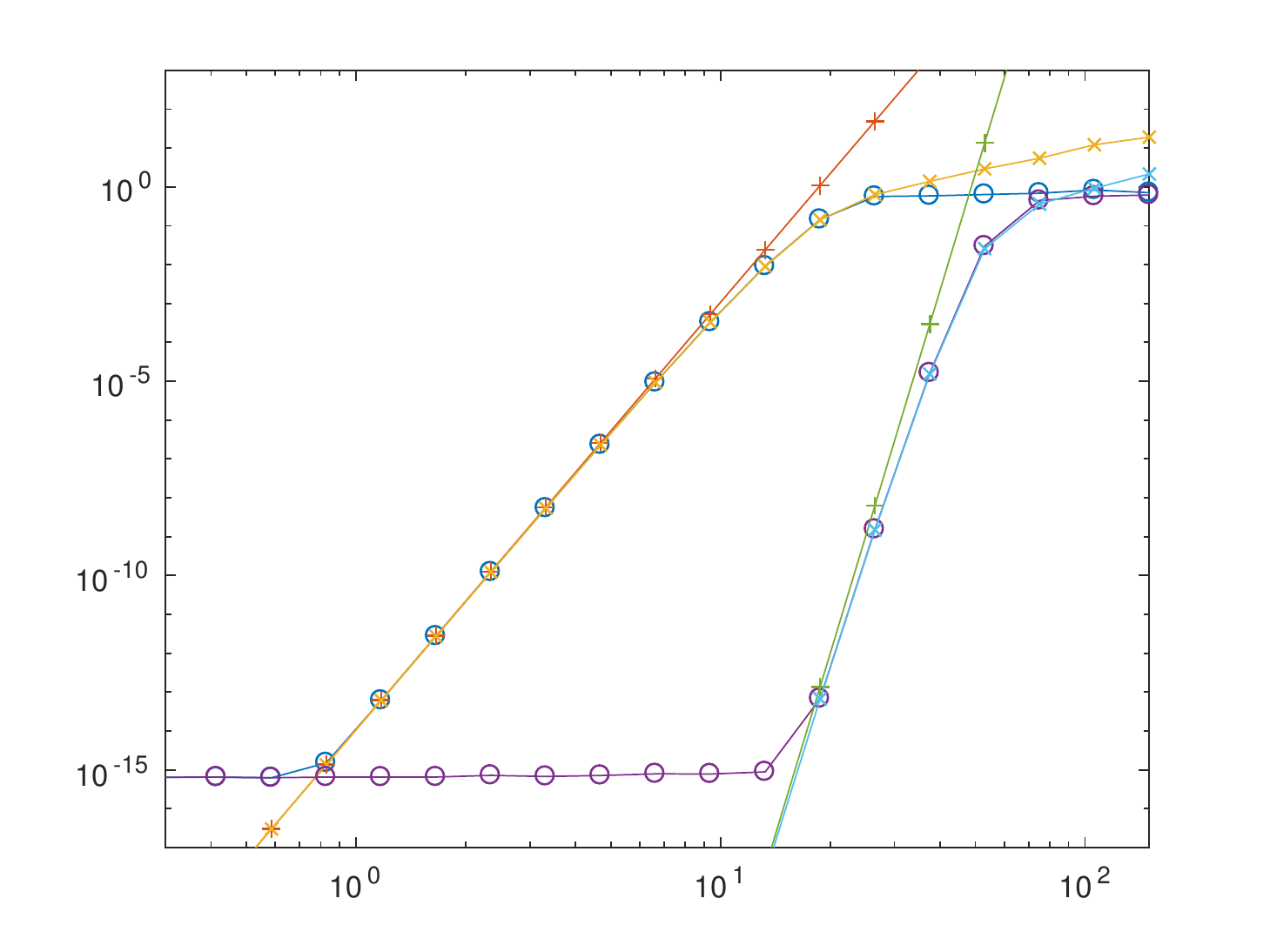}
\put(43,51){$m=10$}
\put(78.5,51){$m=30$}
\put(50,3){$t$}
\end{overpic}
\caption{Error $\|L^{+}_m(t) v\|_2$ ($\circ$) and the error estimates $\Erone$ ($\times$) and $\Era$ ($+$) for the free Schr{\"o}dinger problem and Krylov subspace dimension $m=10$ and $m=30$.}
\label{fig:numexfreeSchrocor}
\end{figure}

%%%%%

%\FloatBarrier
\paragraph{Illustration of defect-based quadrature error estimates from Section~\ref{sec:defectquad}.}

We first illustrate the performance of the estimates based on
Hermite quadrature according to~\eqref{eq:hermitquadnorm} and
improved Hermite quadrature according to~\eqref{eq:improvedHermiteQuad}
for the Hubbard model, see Figure~\ref{fig:num-quad1}.
%Remarks on the numerical computation of~\eqref{eq:improvedHermiteQuad} are given in Section~\ref{sec:defectquad}.
Both estimates are asymptotically correct, whereas the improved quadrature~\eqref{eq:improvedHermiteQuad}
is slightly better for larger time steps $t$,
with the drawback of one additional matrix-vector multiplication.
(See Remark~\ref{remark:cheapvsexpensive}
below for cost efficiency of more expensive error estimates.)

Figure~\ref{fig:num-quad2} refers to estimates based on
the trapezoidal rule~\eqref{luquadtrapez},
the effective order quadrature according to Remark~\ref{remark:quadnew},
and the Hermite quadrature~\eqref{eq:hermitquadnorm}.
For our test problems the assumptions from Section~\ref{sec:defectquad}
on the defect and its effective order
are satisfied for a significant range of values of~$ t $.
We also observe that the inequalities~\eqref{defintbounds}
are satisfied.
The effective order and Hermite quadrature estimates
behave in an asymptotically correct way, while the trapezoidal rule
estimate leads to an upper \tjnew{error} bound which is not sharp for $t\to 0$.

For the skew-Hermitian case $\sigma=-\ii$ and $A=H$ we obtain
\begin{align*}
&|\delta_m(t)|= \big((\ee^{\ii\,t\,T_m}e_1)_m(\ee^{-\ii\,t\,T_m}e_1)_m\big)^{1/2}~~~\text{and effective order $\rho(t)$, see~\eqref{def:effectiveorder},}\\
&\rho(t)=\frac{t\,\big(|\delta_m(t)|\big)'}{|\delta_m(t)|} =
\frac{\ii\,t\, (T_m)_{m-1,m}}{2}\bigg(
\frac{(\ee^{\ii\,t\,T_m}e_1)_{m-1}}{(\ee^{\ii\,t\,T_m}e_1)_m}
-\frac{(\ee^{-\ii\,t\,T_m}e_1)_{m-1}}{(\ee^{-\ii\,t\,T_m}e_1)_m}\bigg).
\end{align*}
For computing the effective order we only consider
time steps for which the defect is not too close to round-off precision,
$\rho(t)>0$, and where $\rho$ appears indeed to be
monotonically decreasing over the computed discrete time steps.
This restriction is compatible with
our assumptions in Section~\ref{sec:defectquad}.

\begin{figure}
\centering
\begin{overpic}
[width=0.7\textwidth]{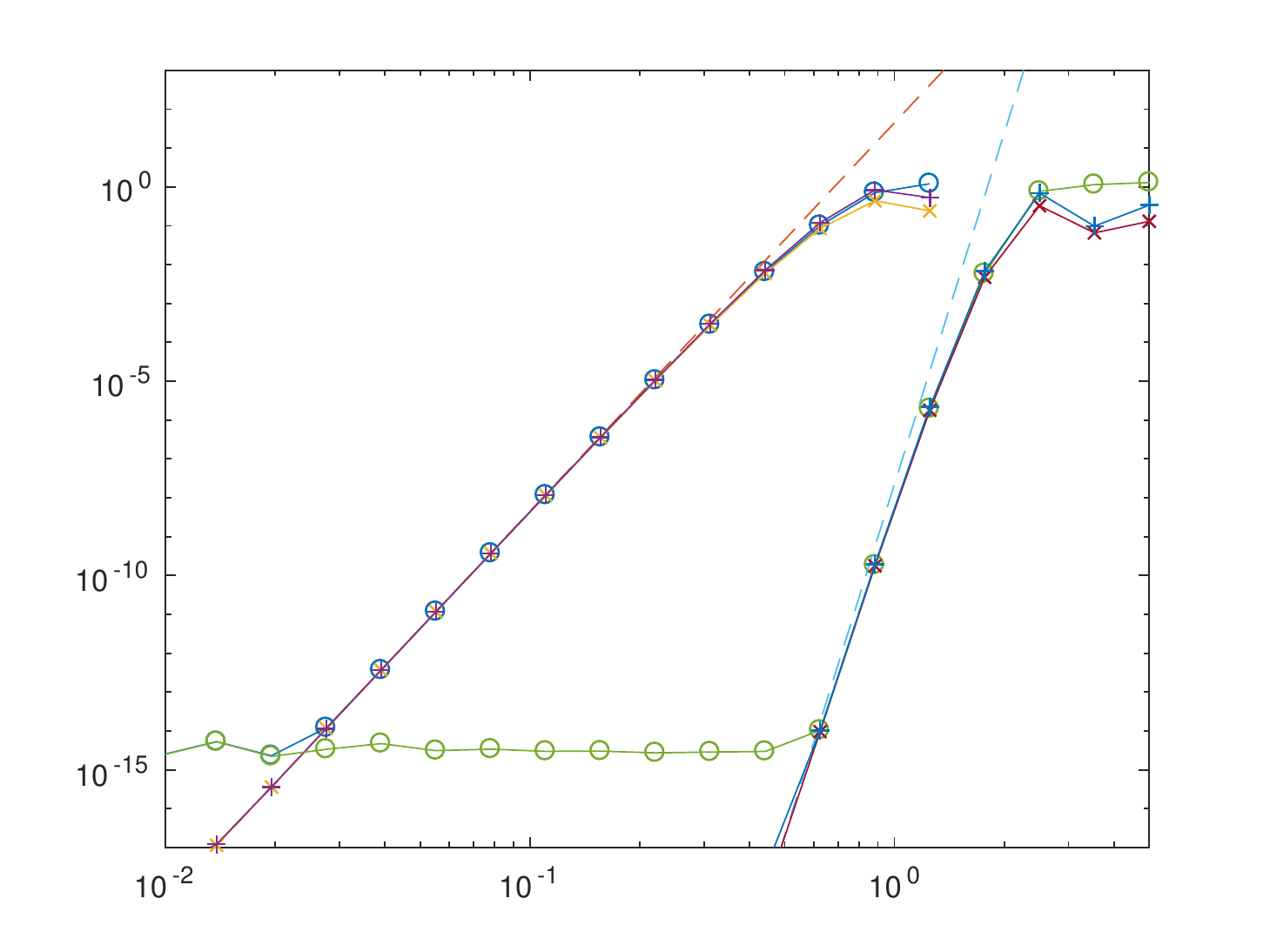}
\put(35,45){$m=10$}
\put(78,45){$m=30$}
\put(50,2){$t$}
\end{overpic}
\caption{Error $\|L_m(t) v\|_2$ ($\circ$) and the error estimates based on the Hermite quadrature ($\times$) and improved Hermite quadrature ($+$), see~\eqref{eq:hermitquadnorm} and~\eqref{eq:improvedHermiteQuad}, for the Hubbard Hamiltonian with $m=10$ and $m=30$.
The dashed lines show the error estimate $\Era$.}
\label{fig:num-quad1}
\end{figure}

\begin{figure}
\begin{tabular}{l}
\begin{overpic}
[width=0.7\textwidth]{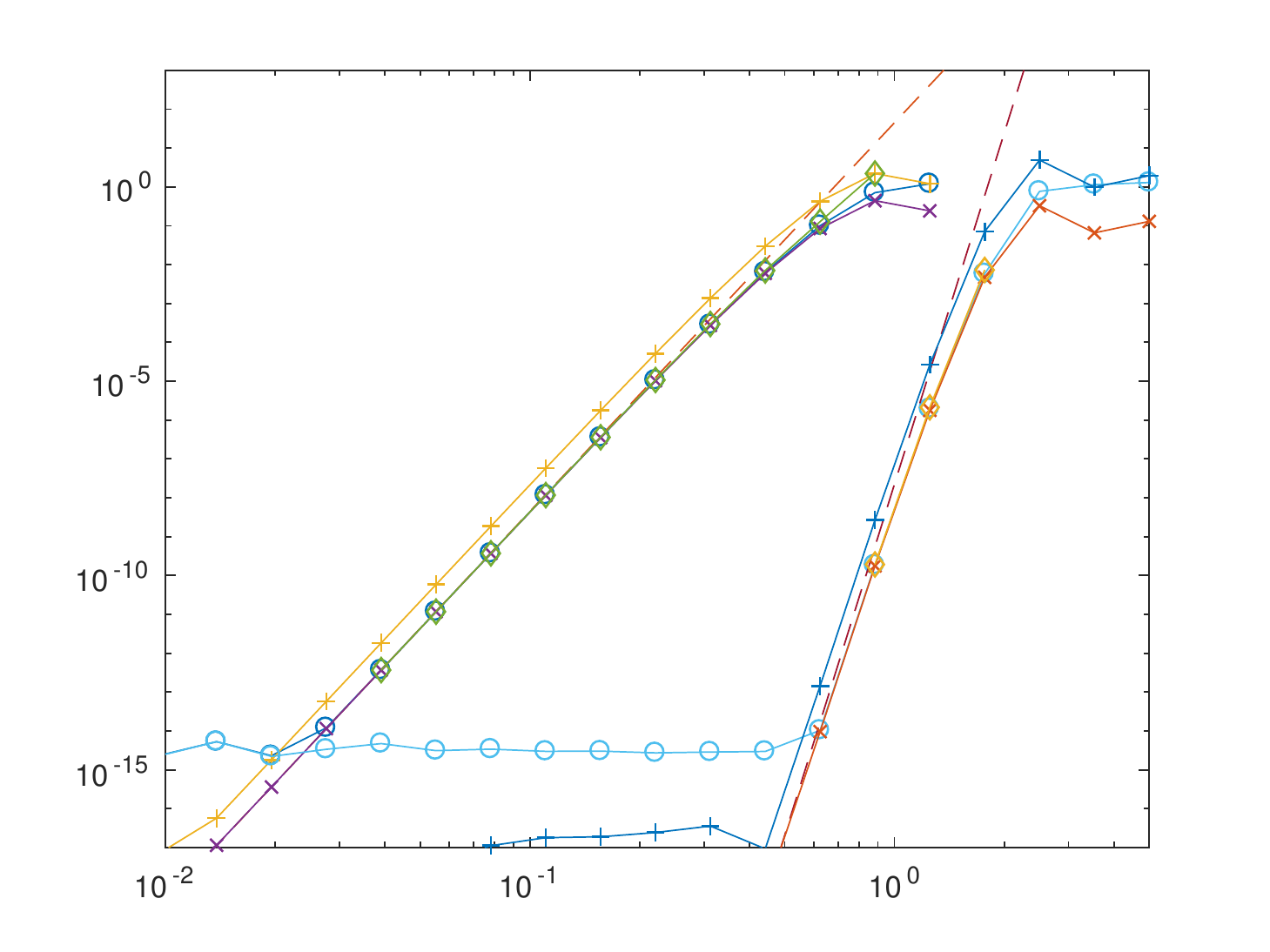}
\put(35,45){$m=10$}
\put(78,45){$m=30$}
\put(50,2){$t$}
\put(91,36.4){\includegraphics[width=0.31\textwidth]{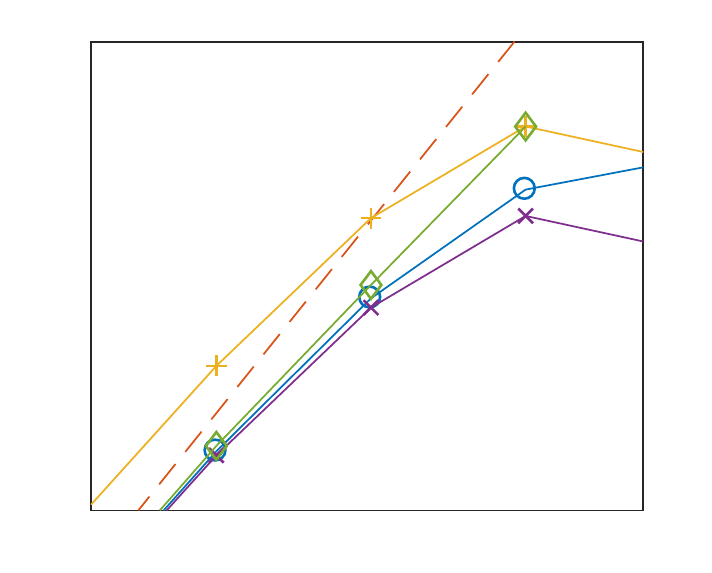}}
\put(91,4.2){\includegraphics[width=0.31\textwidth]{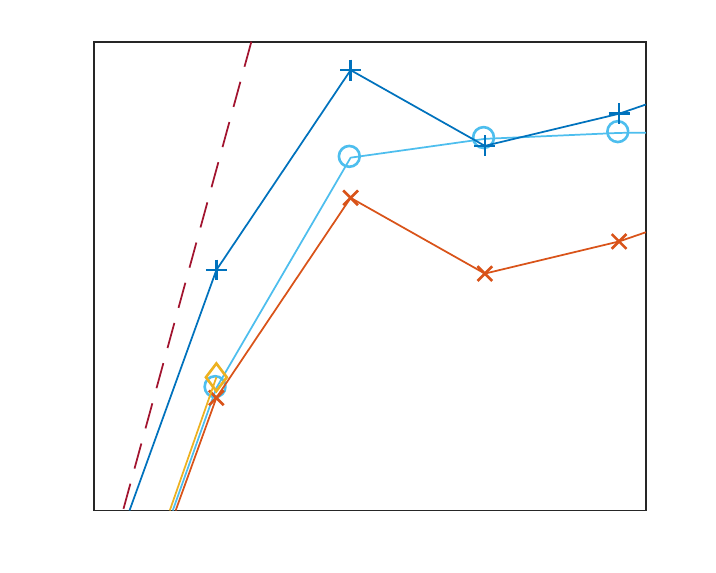}}
\put(120,41.4){\small$m=10$}
\put(120,9.2){\small$m=30$}
\end{overpic}
\end{tabular}
\begin{equation*}
\begin{aligned}
&\text{Computed effective order of the defect for $m=10$:}\\
&\begin{array}{|c|cccccccccc|}
\hline
t    &
3.9\cdot10^{-2} & 5.5\cdot10^{-2} & 7.8\cdot10^{-2} &
1.1\cdot10^{-1} & 1.5\cdot10^{-1} & 2.2\cdot10^{-1} &
3.1\cdot10^{-1} & 4.4\cdot10^{-1} & 6.3\cdot10^{-1} &
8.8\cdot10^{-1} \\\hline
\rho(t) & 8.99 & 8.98 & 8.95 & 8.90 & 8.81 & 8.63 & 8.24 & 7.44 & 5.66 & 1.00\\\hline
\end{array}\\
&\text{and $m=30$:}\\
&
\begin{array}{|c|ccc|}
\hline
t    & 8.8\cdot 10^{-1} & 1.2\cdot 10^0  &1.8\cdot 10^0 \\\hline
\rho(t) & 26.68 & 24.18 & 18.42\\\hline
\end{array}
\end{aligned}
\end{equation*}
\caption{The upper left plot shows the error $\|L_m(t) v\|_2$ ($\circ$) and the error estimates based on the Hermite quadrature~\eqref{eq:hermitquadnorm} ($\times$), the trapezoidal rule~\eqref{luquadtrapez} ($+$) and the effective order quadrature~\eqref{eq:effoquad} ($\Diamond$) for the Hubbard Hamiltonian with $m=10$ and $m=30$.
The dashed lines show the error estimate $\Era$.
On the right-hand side the graphics show a detail from the error plots to illustrate the inequalities~\eqref{defintbounds}.
The table on the bottom shows the computed effective order of the defect for $m=10$ and $m=30$ which is used for the effective order quadrature. }
\label{fig:num-quad2}
\end{figure}

\paragraph{Corrected Krylov approximation, mass conservation and loss of orthogonality.}
We remark that error estimates for the corrected Krylov approximation
usually require one additional matrix-vector multiplication, and applying a standard
Krylov approximation of dimension $m+1$ seems to
be a more favorable choice in our approach to error estimation.

The Krylov approximation of the matrix exponential
conserves the mass for the skew-Hermitian case
in contrast to the
corrected Krylov approximation.
Whether this is a real drawback of the corrected Krylov approximation
depends on the emphasis placed on mass conservation.
In the following examples we focus on the standard Krylov approximation,
with some exceptions which serve for comparisons with the original Expokit code, which is based on the corrected Krylov approximation.

In exact arithmetic we obtain mass conservation for the skew-Hermitian case:
For the case $\|v\|_2=1$ and standard Krylov approximation $S_m(t)v$ we have
\begin{equation}\label{eq:massconsv}
\|S_m(t)v\|_2 = \|V_m \ee^{-\ii\,t\,T_m}e_1\|_2 = e_1^\ast \ee^{\ii\,t\,T_m} V_m^\ast V_m\ee^{-\ii\,t\,T_m}e_1=1.
\end{equation}
The requirement $V_m^\ast V_m=I$ is essential to obtain mass conservation in~\eqref{eq:massconsv}.
In inexact arithmetic with larger choices of $m$ the loss of orthogonality of the Krylov basis $V_m$ is well known
and also observed for the Hubbard Hamiltonian, see Table~\ref{table:orthohubbard}.
\tjnew{Early studies on the effects of round-off errors for the Lanczos approximation of eigenvalue problems are given in \cite{Pa76}.
In the context of matrix functions we refer to \cite{druskinetal98,DK91,Gre89}.}
It was \tjnew{also} shown in~\cite{MMS18X} that the Lanczos approximation does not suffer critically from round-off errors for larger $m$.
Concerning mass conservation we are not aware of such stability results,
but \tjnew{our particular} examples still show relatively good mass conservation also for larger dimensions~$m$.

\begin{table}
\centering
\caption{Loss of orthogonality of the Krylov basis for the Hubbard model.}\label{table:orthohubbard}
\begin{equation*}
\begin{array}{|c|cccccccc|}\hline
m & 10 & 20 & 30 & 40 & 50 & 60 & 70 & 80\\\hline
\lfloor\log_{10}(\|V_m^\ast V_m-I\|_2)\rfloor & -14 & -14 & -13 & -11 & -10 & -8 & -6 & -5\\\hline
\end{array}
\end{equation*}
\end{table}

\begin{comment}

\tj{also remark that for too large $m$, mass conservation is no longer exact because of loss of orthogonality of $V_m$.}
Krylov subspace approximations are often used in the context of geometric integrators.
In this case a property like mass conservation is an important feature.
We note that mass conservation no longer holds for the
corrected Krylov approximation $\Scor_m(t)v$ defined in \tj{todo}~\eqref{Smbar}
in the sense that $\ee^{-\ii\,t\Tcor_m}$ is not unitary.
%Whether this is a real drawback of the asymptotic error estimate depends on the emphysis placed on mass conservation,
%which is theoretically only given for the standard Krylov method.
%We conclude that the asymptotic error estimate holds for the standard Krylov method, whereas the error estimate based on Corollary~\ref{co:saad92} can be adjusted to the corrected case.
Numerical tests of mass conservation of the standard and corrected Krylov approximation are shown in Table~\ref{table:masserror}.
We observe mass conservation for the standard Krylov approximation up to round-off precision.
For the corrected Krylov approximation a loss of mass conservation can be observed.
For small tolerances, the error in the mass is negligible for short integration times.
%Our numerical tests show that the error on the mass can negligible small if the error tolerance is chosen small enough.
%In general such the mass conservation can not be argued for the corrected Krylov method which leads to a disadvantage of the corrected method.
\end{comment}

\paragraph{Numerical tests for step size control.}
The idea of choosing discrete time steps for the Krylov approximation is described in Section~\ref{sec:timeint}.
The following tests are applied to the matrix exponential of the Hubbard Hamiltonian.
We first clarify the notation used for our test setting.
\begin{description}
\item {\em Expokit and Expokit$^{\star}$}.
The original Expokit code uses the corrected Krylov approximation with heuristic step size control and an
error estimator which is based on the error expansion~\eqref{sumofsaad}, see ~\cite[Algorithm 3.2]{sidje98} for details.
Since the standard Krylov approximation is not part of the Expokit package,
we have slightly adapted the code and its error estimate such that the standard Krylov approximation is used.
We refer to the adapted package as Expokit$^{\star}$.
With Expokit$^{\star}$ our comparison can be drawn with the standard Krylov approximation
which may in some cases be the method of choice as discussed above.
\item {\em Step size based on $\Era$.} In another test code the \tjnew{upper error bound} $\Era$ from Theorem~\ref{theorem:upperboundexp} is used.
With $\Era$ we obtain \tjnew{proven} upper bounds on the error and reliable step sizes~\eqref{choosestepbyEralocal}.
\item By {\em tr.quad}, {\em eff.o.quad}, and {$\mathit{Err}_1$} we refer to
the trapezoidal rule~\eqref{luquadtrapez}, the effective order quadrature~\eqref{eq:effoquad}, and~\eqref{errorsaadterm1}, respectively.
Because these error estimates cannot be inverted directly we need to apply heuristic ideas for the step size control, see~\eqref{heuristicstepsize}.
In addition, we use the iteration~\eqref{eq:stepsize2loops} given in Remark~\ref{reqmark:stepsizeiteration} to improve step sizes.
Monitoring the effective order $\rho(t)$ of the defect (see Section~\ref{sec:defectquad}) the heuristic step size control can be improved.
For the test problems we have solved, iteration~\eqref{eq:stepsize2loops}
converges in less than $2$~iterations for $m=10$ or less than $5$~iterations for $m=30$.
We simply choose $N_j=5$ for our tests.
%\item {A priori estimate~\rm{\cite[eq. (20)]{mohcar10}}}
%The error estimate given in~\cite[eq. (20)]{mohcar10} is used to approximate the local error.
%Numerically this can be done by adjusting the tolerance $\ttol \to (\ttol \Delta t)$ in an iterative manner.
%This error estimate requires spectral information, which may be a drawback.
%For tests on the Hubbard model we use $(\lambda_{\text{max}} - \lambda_{\text{min}}) = 27.4$ as suggested
%in the description of the Hubbard Hamiltonian.
%\item {A priori error estimate of Expokit}, see~\eqref{heuristicstepsize1}.
%\item {A priori estimate~\rm{\cite[Theorem 4]{hoclub97}}}
\item The {\em a~priori estimates~\eqref{heuristicstepsize1},~\cite[Theorem 4]{hoclub97} and~\cite[eq. (20)]{mohcar10}}
are given in the corresponding references.
Formula~\eqref{heuristicstepsize1} taken from the Expokit code directly provides a step size.
In~\cite[eq. (20)]{mohcar10} the computation of the step size is described.
For the error estimate given in~\cite[Theorem 4]{hoclub97} we apply Newton iteration to determine an appropriate step size.
{For tests on the Hubbard model we use $(\lambda_{\text{max}} - \lambda_{\text{min}}) = 27.4$ as suggested
in the description of the Hubbard Hamiltonian.}
\end{description}
In Remark~\ref{remark:cheapvsexpensive} below we also investigate the following variants:
\begin{description}
\item {\em Step size based on $\Era^+$.}
By $\Era^+$ we denote the \tjnew{upper error bound} for the corrected Krylov approximation as given in Theorem~\ref{theorem:asymerrorpricorrected} with $p=0$.
The corres\-ponding step size is given by~\eqref{eq:stepasymptoticcorrected}.
\item By {\em i.H.quad} we refer to the improved Hermite quadrature~\eqref{eq:improvedHermiteQuad}. Similarly to other quadrature error estimates we use heuristic step size control and iteration~\eqref{eq:stepsize2loops} to determine adequate step sizes.
\end{description}

\begin{remark}
In the Expokit code the step sizes are rounded to $2$ digits in every step.
Rounding the step size can give too large errors in some steps.
This makes it necessary to include safety parameters in Expokit which
on the other hand slow down the performance of the code.
It seems advisable to avoid any kind of rounding of step sizes.
\end{remark}

In Table~\ref{table:stepsize10steps} we compare the total time step $t$ for the Krylov approximation with $m=10$ and $m=30$
after $N=10$ steps obtained with the different step size control strategies.
For the local error we choose the tolerance $\ttol=10^{-8}$.
The original Expokit code seems to give larger step sizes,
but it also uses a larger number of matrix-vector multiplications, see Remark~\ref{remark:cheapvsexpensive}.
The error estimate $\Era$ leads to optimal step sizes for $m=10$ and close to optimal step sizes for $m=30$.
For any choice of $m$ the error estimate $\Era$ gives reliable step sizes.
The trapezoidal quadrature rule overestimates the error and, therefore, step sizes are smaller.
The effective order quadrature and $\Erone$ give optimal step sizes.
With the assumptions of Section~\ref{sec:defectquad} (which apply to our test examples), the trapezoidal and effective order quadrature give reliable step sizes.
For the error estimate $\Erone$ we do not have results on the reliability of the step sizes since the error estimate $\Erone$ does not lead to an upper bound of the error \tjnew{in general}.
The tested a priori estimates~\eqref{heuristicstepsize1},~\cite[Th. 4]{hoclub97}, and~\cite[(20)]{mohcar10}
overestimate the error and lead to precautious step size choices.
For all the tested versions the accumulated error
$ L_m^{\star} $ (see~\eqref{Lmstar}) satisfies $\|L_m^{\star} v\|_2/t \leq \ttol$.

\begin{table}
\centering
\caption{
The displayed step size $t$ is the sum of $N=10$ substeps computed by different versions of step size control,
as described above.
In the top table we show the results for $m=10$,
in the bottom table for $m=30$,
both for tolerance $\ttol=10^{-8}$, for the Hubbard Hamiltonian.
}
\label{table:stepsize10steps}
\begin{equation*}
\begin{aligned}
&\begin{array}{|c|c|c|c|c|c|c|c|c|c|}\hline
m=10
& \text{Expokit}
& \text{Expokit}^{\star}
& \Era
& \text{tr.quad}
& \text{eff.o.quad}
& \text{$\Erone$}
& \eqref{heuristicstepsize1}
& \text{\cite[Th. 4]{hoclub97}}
& \text{\cite[(20)]{mohcar10}}
\\\hline
t
&0.8930  &  0.6850  &  0.8422  &  0.7058  &  0.8443  &  0.8444  &  0.1918  &  0.4918  &  0.6879\\
 N
&10&10&10&10&10&10&10&10&10 \\
\text{\#\,m-v}
& 110 & 100 & 100 & 100 & 100 & 100 & 100 & 100 & 100\\\hline
\|L_m^{\star} v\|_2/t
& 3.4\cdot 10^{-09} & 3.1\cdot 10^{-09} & 9.8\cdot 10^{-09} & 2.0\cdot 10^{-09} & 1.0\cdot 10^{-08} & 1.0\cdot 10^{-08} & 2.4\cdot 10^{-14} & 7.8\cdot 10^{-11} & 1.6\cdot 10^{-09} \\\hline
% 9.9683e-09, 9.9824e-09
\end{array}&
\\
&\begin{array}{|c|c|c|c|c|c|c|c|c|c|}\hline
m=30
& \text{Expokit}
& \text{Expokit}^{\star}
& \Era
& \text{tr.quad}
& \text{eff.o.quad}
& \text{$\Erone$}
& \eqref{heuristicstepsize1}
& \text{\cite[Th. 4]{hoclub97}}
& \text{\cite[(20)]{mohcar10}}
\\\hline
t
& 8.5700  &  8.2500  &  9.7361  &  9.2582 &  10.2243  & 10.2338  &  2.1131  &  8.2642  &  8.8111 \\
 N
&10&10&10&10&10&10&10&10&10 \\
\text{\#\,m-v}
& 310 & 300 & 300 & 300 & 300 & 300 & 300 & 300 & 300\\\hline
\|L_m^{\star} v\|_2/t
& 2.4\cdot 10^{-10} & 2.8\cdot 10^{-10} & 2.6\cdot 10^{-09} & 7.0\cdot 10^{-10} & 9.5\cdot 10^{-09} & 9.7\cdot 10^{-09}& 2.9\cdot 10^{-15} & 3.3\cdot 10^{-11} & 1.9\cdot 10^{-10} \\\hline
%&2.4\, 10^{-10} & 2.8\, 10^{-10} & 2.6\, 10^{-9} & 7.0\, 10^{-10} & 9.5\, 10^{-9} & 9.7\, 10^{-9}& 2.9\, 10^{-15} & 3.3\, 10^{-11} & 1.9\, 10^{-10} \\\hline
\end{array}&
\end{aligned}
\end{equation*}
\end{table}

Apart from step size control, \tjnew{the upper error bound}~$\Era$ can be used on the fly %at every step of the Lanczos method
to test if the dimension of the Krylov subspace is already sufficiently large to solve the problem in a single time step with the required accuracy.
%This can lead to large improvements if multiple computations with small time steps are required, see Magnus integrators.
For our test problems this stopping criterion is applied to the $\Era$ estimate.%, but could be added to any variant of Lanczos.
We refer to Table~\ref{tablet03m30}, in which we observe the Krylov method with error estimate $\Era$ to stop after $17$ steps
instead of computing the full Krylov subspace of dimension $30$.
In comparison, the original Expokit package needs a total of $62$ matrix-vector multiplications.

%For the example of the small time step $t=0.3$ and $m=30$ (see Table~\ref{tablet03m30})
%testing $\Era$ leads to large improvements in the computational cost.
%The first time step of the Expokit code is chosen by a priori error estimates which underestimate the time step and,
%therefore, a restart is needed in Expokit and $62$ matrix-vector multiplications are used in total.
%Applying the error estimate~\eqref{asyerrest} during the Lanczos method already detects sufficient accuracy
%in the first step with a Krylov subspace of dimension $17$
%requiring $17$ matrix-vector multiplications in total.
%The result of the other codes is similar,
%mainly the error is accepted to be small enough after
%one step which still leads to the computation of the full Krylov subspace $m=30$.

\begin{table}
\centering
\caption{With a test setting similar to Table~\ref{table:stepsize10steps}, we now compute up to a fixed time $t=0.3$ and choose the number $ N $~of steps according to the step size control.
We use a tolerance $\ttol=10^{-8}$ and $m=30$.
For this problem we see a significant reduction
in the number of matrix-vector multiplications used for the estimate $\Era$ by the stopping critera described in the text.}
\label{tablet03m30}
\begin{equation*}
\begin{array}{|c|c|c|c|c|c|c|c|c|}\hline
{m=10}
& \text{Expokit}
& \text{Expokit}^{\star}
& \Era
& \text{tr.quad}
& \Erone
& \eqref{heuristicstepsize1}
& \text{\cite[Th. 4]{hoclub97}}
& \text{\cite[(20)]{mohcar10}}
\\\hline
t
&0.3&0.3&0.3&0.3&0.3&0.3&0.3&0.3 \\
 N
&2&2&1&1&1 &2&1&1\\
\text{\#\,m-v}
& 62 & 60 & 17 & 30 & 30 & 60 & 30 & 30\\\hline
\|L_m^{\star} v\|_2/t
& 8.4\cdot 10^{-15} & 8.4\cdot 10^{-15} & 1.0\cdot 10^{-09}  & 9.7\cdot 10^{-15} &
  9.7\cdot 10^{-15} & 1.0\cdot 10^{-14} & 9.7\cdot 10^{-15} & 9.7\cdot 10^{-15} \\\hline
\end{array}
\end{equation*}
\end{table}

\begin{remark}\label{remark:cheapvsexpensive}
Error estimates for the corrected Krylov approximation or improved error estimates such as the improved Hermite quadrature~\eqref{eq:improvedHermiteQuad}
require additional matrix-vector multiplications.
Instead of investing computational effort in improving the error estimate,
one may as well increase the dimension of the standard Krylov subspace.
For comparison we test the original Expokit code, the corrected Krylov approximation with error estimate $\Era^+$
and the improved Hermite quadrature~\eqref{eq:improvedHermiteQuad} with Krylov subspace $m-1$.
Table~\ref{Table:ExpensivErrEstvsCheapErrEst} shows that a standard Krylov approximation with dimension $m$ leads to better results,
although all considered versions use the same number
of matrix-vector multiplications.
Since the reliability of error estimates such as $\Era$
has been demonstrated earlier,
it appears that additional cost to improve the error estimate
is not justified.

\begin{table}
\centering
\caption{
All variants shown use exactly $m$ matrix-vector multiplications.
Whereas Expokit, improved Hermite quadrature (i.H.quad) and $\Era^+$ imply higher cost for the error estimate,
the other codes $\Era$, effective order quadrature (eff.o.quad) and $\Erone$ use standard Krylov subspaces and do not spend additional matrix-vector multiplications on error estimates.
%since reliabilty is also given for some cheap estimates such as $\Era$.
%A direct comparison of the error estimate $\Era$ for the standard and corrected Krylov approximation shows superior results for the standard Krylov approximation.
}
\label{Table:ExpensivErrEstvsCheapErrEst}
\begin{equation*}
\begin{aligned}
\begin{array}{|c|c|c|c|c|c|c|}\hline
m=10& \text{Expokit}
& \Era^+
& \text{i.H.quad}
& \Era
& \text{eff.o.quad}
& \Erone
\\\hline
t
&0.6620&0.7828&0.5863&0.8346&0.8366&0.8368 \\
N
& 10 & 10 & 10 & 10 & 10 & 10  \\\hline
\text{\#\,m-v}
& 100 & 100 & 100 & 100 & 100 & 100 \\\hline
\|L_m^{\star} v\|_2/t
& 4.1\cdot 10^{-09} & 8.8\cdot 10^{-09} & 1.0\cdot 10^{-08} & 9.8\cdot 10^{-09} & 1.0\cdot 10^{-08} & 1.0\cdot 10^{-08} \\\hline
\end{array}
\\
\begin{array}{|c|c|c|c|c|c|c|}\hline
m=30& \text{Expokit}
& \Era^+
& \text{i.H.quad}
& \Era
& \text{eff.o.quad}
& \Erone
\\\hline
t
&  8.1900  &  9.5763  &  9.6591  &  9.7482  & 10.2378  & 10.2473 \\
N
& 10 & 10 & 10 & 10 & 10 & 10  \\\hline
\text{\#\,m-v}
& 100 & 100 & 100 & 100 & 100 & 100 \\\hline
\|L_m^{\star} v\|_2/t
& 3.6\cdot 10^{-10} & 2.7\cdot 10^{-09} & 9.2\cdot 10^{-09} & 2.6\cdot 10^{-09} & 9.5\cdot 10^{-09} & 9.7\cdot 10^{-09} \\\hline
\end{array}
\end{aligned}
\end{equation*}
\end{table}
\end{remark}

\subsection{The Hermitian case}\label{sec:numherm}

To obtain a more complete picture, we also briefly consider the case of \tjnew{a Hermitian} matrix $A=H$
%$M$ in \eqref{eq0}, correspoding to
with $\sigma=1$ in \eqref{exp(itA)v}.
Such a model is typical of the discretization of a parabolic PDE.
Thus, the result may depend on the regularity of the initial data,
which is chosen to be random in our experiments.

\tjnew{
\paragraph{Heat equation.}
To obtain the heat equation in~\eqref{exp(itA)v} we choose $A=H$ in~\eqref{Hschroe} and $\sca=-1$.
Details on the test setting are already given in Subsection~\ref{sec:skew-herm}.}

For the heat equation, $H$ given in~\eqref{Hschroe}, we can also verify the error estimates, see Figure~\ref{fig:numexheat}.
In comparison to the skew-Hermitian case we do not observe a large time regime for which the error is of the asymptotic order $m$.
As shown in Proposition~\ref{upperboundphi} we do obtain \tjnew{an upper error bound} using $\Erone$ for the heat equation.
However, the evolution is not unitary but nonexpansive in this case, whence the asymptotics are not observed as clearly here.

Similarly to the skew-Hermitian case, we can also apply the effective order quadrature according to Remark~\ref{remark:quadnew} to the Hermitian case.
For the Hermitian case results of Proposition~\ref{upperboundphi} can be applied to obtain
\begin{align*}
&|\delta_m(t)| = \delta_m(t) = (\ee^{\ii\,t\,T_m}e_1)_m~~~\text{and effective order $\rho(t)$, see~\eqref{def:effectiveorder},}\\
&\rho(t)=\frac{t\,\big(|\delta_m(t)|\big)'}{|\delta_m(t)|} =
t\,\bigg( \frac{(T_m)_{m,m-1}\,(\ee^{t\,T_m}e_1)_{m-1}}{(\ee^{t\,T_m}e_1)_m} + (T_m)_{m,m}\bigg).
\end{align*}
For computing the effective order we only consider
time steps for which the defect is not too close to round-off precision,
$\rho(t)>0$, and where $\rho$ appears indeed to be
monotonically decreasing over the computed discrete time steps.
This restriction is compatible with
our assumptions in Section~\ref{sec:defectquad}.

\begin{figure}
\centering
\begin{overpic}
[width=0.7\textwidth]{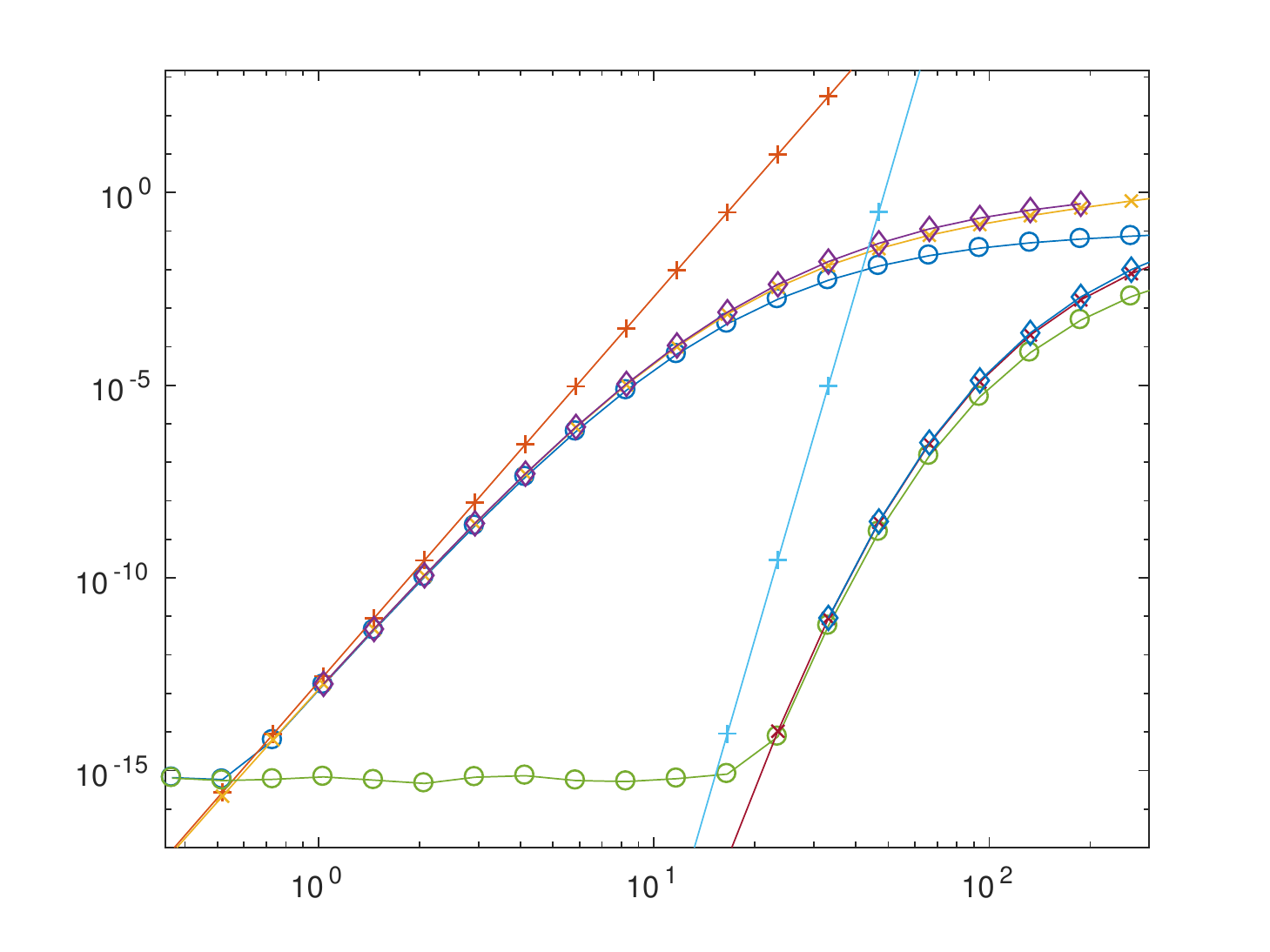}
\put(25,39){$m=10$}
\put(78,39){$m=30$}
\put(50,2){$t$}
\end{overpic}
\begin{equation*}
\begin{aligned}
&\text{Computed effective order of the defect for $m=10$ (partly):}\\
&\begin{array}{|c|ccccccccccc|}
\hline
t    &
1.0\cdot10^{0} & 1.5\cdot10^{0} & 2.1\cdot10^{0} &
2.9\cdot10^{0} & 4.1\cdot10^{0} & 5.9\cdot10^{0} &
8.3\cdot10^{0} & 1.2\cdot10^{1} & 1.7\cdot10^{1} &
2.3\cdot10^{1} & 3.3\cdot10^{1}\\\hline
\rho(t) &
8.50 & 8.30 & 8.02 &
7.64 & 7.14 & 6.48 &
5.65 & 4.66 & 3.58 &
2.52 & 1.60\\\hline
\end{array}\\
&\text{and $m=30$:}\\
&\begin{array}{|c|ccccccc|}
\hline
t    &
3.3\cdot10^{1} & 4.7\cdot10^{1} & 6.6\cdot10^{1} &
9.4\cdot10^{1} & 1.3\cdot10^{2} & 1.9\cdot10^{2} &
2.7\cdot10^{2}\\\hline
\rho(t) &
16.60 & 13.47 & 10.33 &
7.48 & 5.15 & 3.36 &
2.07 \\\hline
\end{array}
\end{aligned}
\end{equation*}
\caption{Error $\|L_m(t)v\|_2$ ($\circ$), the error estimates $\Erone$ ($\times$) and $\Era$ ($+$)
and the error estimate based on the effective order quadrature~\eqref{eq:effoquad} ($\Diamond$)
for the heat equation with $m=10$ and $m=30$.
The tabular on the bottom shows some of the computed values for the effective order.}
\label{fig:numexheat}
\end{figure}

\tjnew{\subsection{A non-normal problem}\label{sec:numgen}
For a more general case we consider a convection-diffusion equation (see \cite{MN01,EE06}).
\begin{equation}\label{convdifpde}
\begin{aligned}
&\partial_t u = \Delta u - \tau_1 \partial_{x_1} u - \tau_2 \partial_{x_2} u,~~\tau_1,\tau_2\in\R,~~~u=u(t,x),~~t\geq 0,~~ x\in\Omega=[0,1]^3,\\
&u(0,x)=v(x)~~\text{for}~x\in\Omega,~~u(t,x)=0~~\text{for}~x\in\partial\Omega.
\end{aligned}
\end{equation}
Following \cite{MN01,EE06} we use a central finite difference scheme to discretize the partial differential operator in~\eqref{convdifpde}.
The grid is chosen uniformly with $(n+2)^3$ points and mesh width $h=1/(n+1)$.
The dimension $N$ of the discrete operator is $N=n^3$.
Choosing $n=15$ we obtain $N=3357$.
The discretized operator is given by
\begin{align}
&A=( I_{n\times n} \otimes (I_{n\times n} \otimes C_1) ) 
+ ( B \otimes I_{n\times n} + I_{n\times n}\otimes C_2 ) \otimes I_{n\times n} )\in\R^{N\times N},~~~\text{with}\label{Aconvdif}\\
&~~~B=\tfrac{1}{h^2} \text{tridiag}(1,-2,1)\in\R^n,~~~
C_i=\tfrac{1}{h^2} \text{tridiag}(1+\mu_i,-2,1-\mu_i)\in\R^n,~~i=1,2,\notag
\end{align}
and $\mu_i=\tau_i\,(h/2)$. 
The spectrum of the non-normal matrix $A$ in~\eqref{Aconvdif} (see \cite{MN01}) satisfies
\begin{align*}
\text{spec}(A)\subseteq &\tfrac{1}{h^2}\,[-6-2\cos(\pi\,h)\text{Re}(\theta),-6+2\cos(\pi\,h)\text{Re}(\theta)] \\
   &~\times \tfrac{1}{h^2}\,\ii\,[-2\cos(\pi\,h)\text{Im}(\theta),2\cos(\pi\,h)\text{Im}(\theta)] .
\end{align*}
with $\theta = 1 + \sqrt{1-\mu_1^2} + \sqrt{1-\mu_2^2}$.
Therefore, the eigenvalues are complex-valued if at least one $\mu_i>1$.
The matrix $A$ depends on the parameters $\mu_i$, correspondingly $\tau_i$, for which we consider two different cases,
\begin{equation}\label{cdsetting1}
\text{$\mu_1=0.9$, $\mu_2=1.1$},~~~\text{with}~~\text{spec}(h^2\,A) \subseteq [-9,-3] \times \ii[-1,1],
\end{equation}
and
\begin{equation}\label{cdsetting2}
\text{$\mu_1=\mu_2=10$},~~~\text{with}~~\text{spec}(h^2\,A) \subseteq [-8,-4] \times \ii [ -39,39].
\end{equation}

In the following numerical experiments we apply the Krylov approximation to $\ee^{t\,A}v$
($\sigma=1$ in~\eqref{exp(itA)v}) for different time steps $t$ and starting vector $v=(1,\ldots,1)^\ast\in\R^{N}$ as in~\cite{MN01}.
Since $A$ is non-normal we use the Arnoldi method to generate the Krylov subspace.

The error estimates $\Era$ and $\Erone$ are compared to the exact error norm $\|L_m(t) v\|_2$ in Figure~\ref{fig:numexcd0911} for the case~\eqref{cdsetting1}
and in Figure~\ref{fig:numexcd1010} for the case~\eqref{cdsetting2}.
As shown in Theorem~\ref{theorem:upperboundexp} the error estimate $\Era$ constitutes an upper error bound.
The error estimate $\Erone$ gives a good approximation of the error but has not been proven to give an upper bound in general.

Compared to~\eqref{cdsetting2}, the spectrum for~\eqref{cdsetting1} is closer to the Hermitian case.
The spectrum for~\eqref{cdsetting2}, on the other hand, is dominated by large imaginary parts
similarly as in the skew-Hermitian case.

In Figure~\ref{fig:numexcd0911} we observe effects similar to the Hermitian case.
The asymptotic order $m$ of the error does not hold for a large time regime,
and the error estimate $\Era$ is not as sharp as in the skew-Hermitian case.
On the other hand, in Figure~\ref{fig:numexcd1010}, we observe that the performance of the error estimates is closer to the skew-Hermitian case.
Therefore, the upper error bound $\Era$ is sharp for a larger range of time steps.
As already observed for the Hermitian and skew-Hermitian cases, the error of the Krylov approximation is closer to its asymptotic order $m$
for smaller choices of $m$.
%The exact behaviour of the error estimates $\Era$ obviously depends on the starting vector and can not be fully predicted in general.
% and explicit spectral information
}

\begin{figure}
\centering
\begin{overpic}
[width=0.7\textwidth]{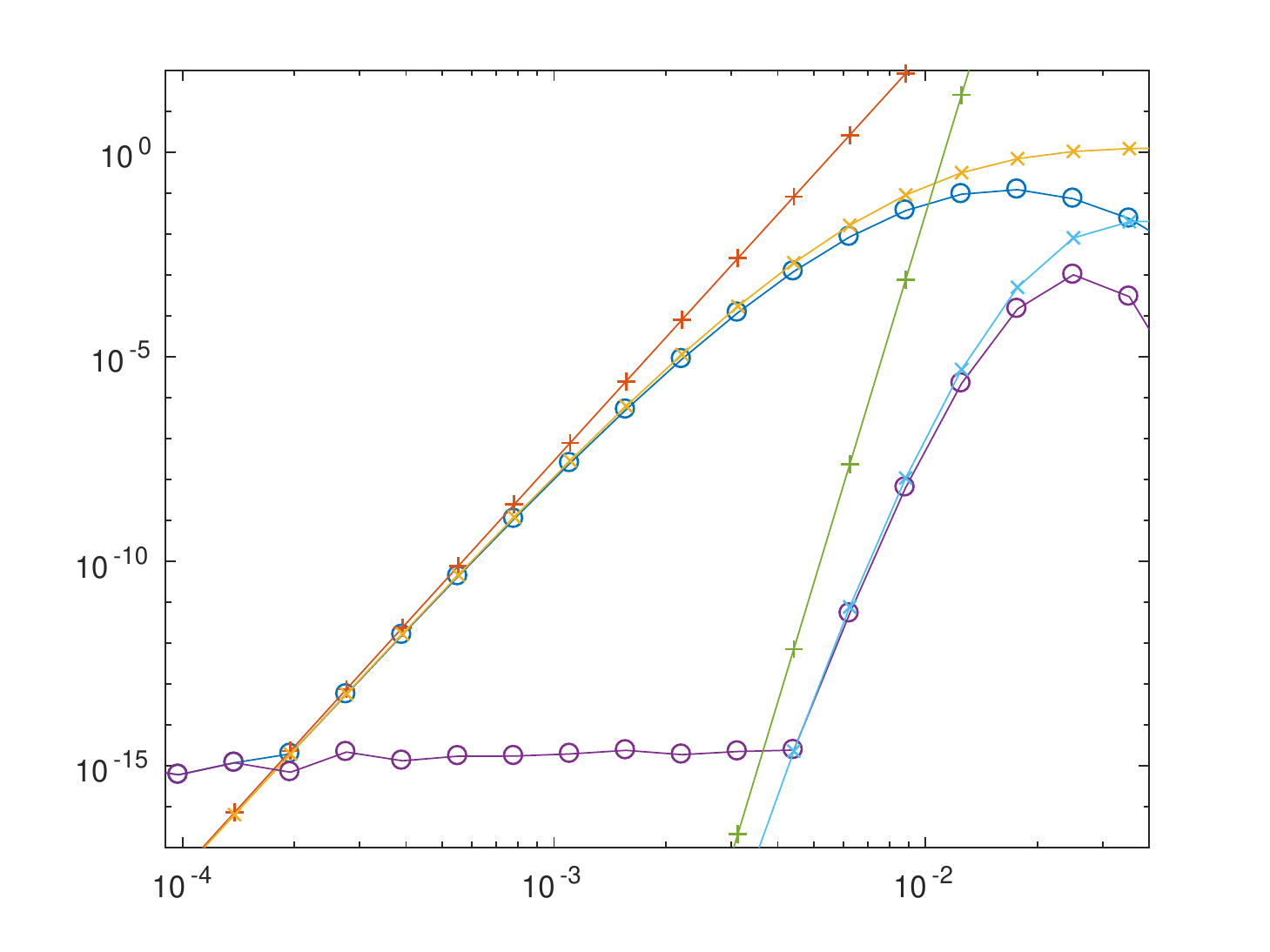}
\put(25,39){$m=10$}
\put(78,39){$m=30$}
\put(50,2){$t$}
\end{overpic}
\caption{\tjnew{Error $\|L_m(t)v\|_2$ ($\circ$) and the error estimates $\Erone$ ($\times$) and $\Era$ ($+$) for the convection-diffusion problem~\eqref{Aconvdif} with $\mu_1=0.9$ and $\mu_2=1.1$
 and Krylov subspace dimensions $m=10$ and $m=30$.}}
\label{fig:numexcd0911}
\end{figure}

\begin{figure}
\centering
\begin{overpic}
[width=0.7\textwidth]{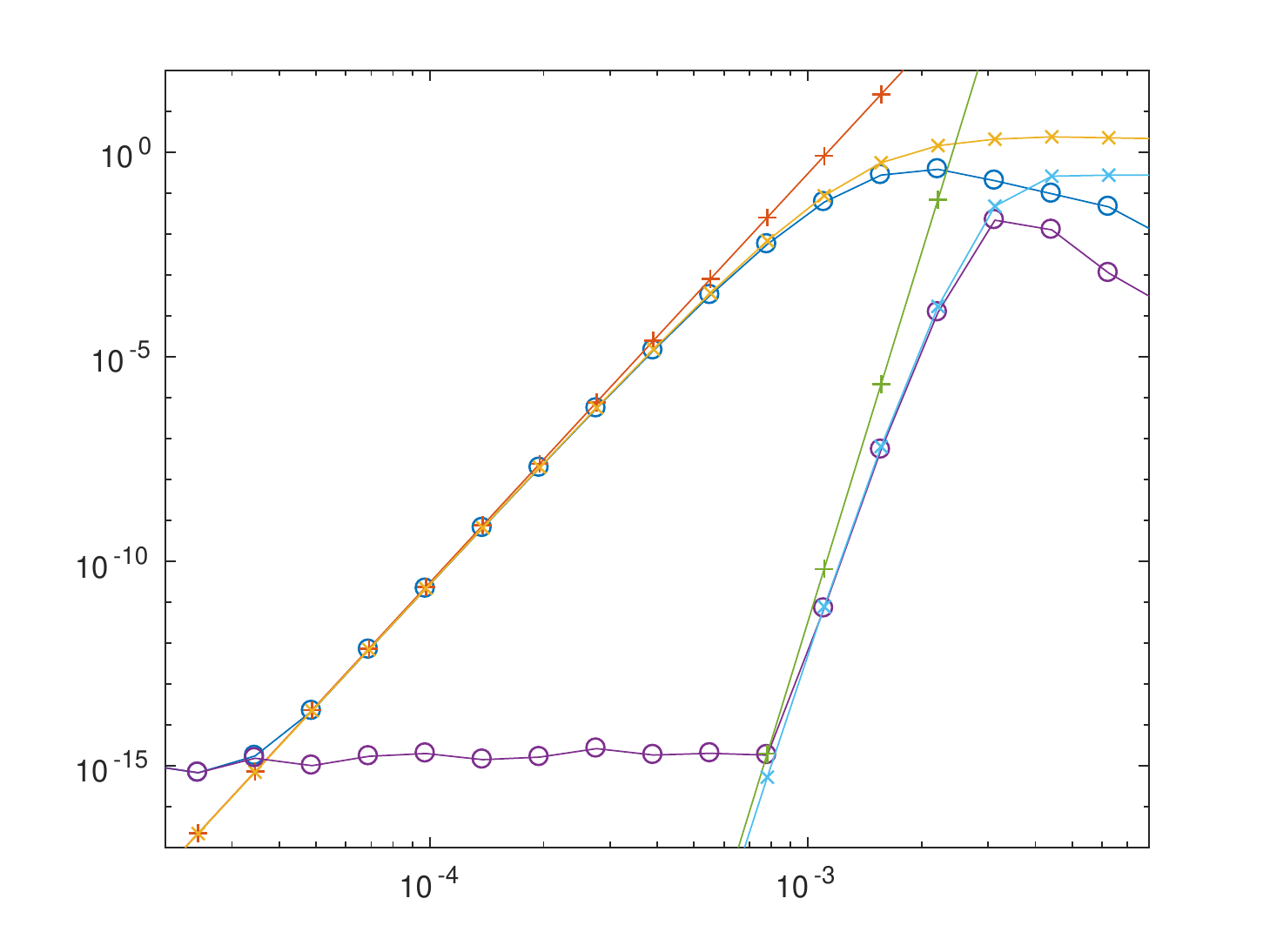}
\put(25,39){$m=10$}
\put(78,39){$m=30$}
\put(50,2){$t$}
\end{overpic}
\caption{\tjnew{Error $\|L_m(t)v\|_2$ ($\circ$) and the error estimates $\Erone$ ($\times$) and $\Era$ ($+$) for the convection-diffusion problem~\eqref{Aconvdif} with $\mu_1=\mu_2=10$
 and Krylov subspace dimensions $m=10$ and $m=30$.}}
\label{fig:numexcd1010}
\end{figure}

\section{Summary and outlook.}
We have studied a new reliable error estimate $\Era$
for Krylov approximations to the matrix exponential and $ \phif $-functions.
\tjnew{This error estimate constitutes an upper bound on the error},
and it can be computed on the fly at nearly no additional cost.
The \tjnew{Krylov} process can be stopped as soon as the error estimate satisfies a given tolerance.

Step size control for a simple restarted scheme is an important application.
The \tjnew{upper error bound $\Era$} is an appropriate tool for this task,
since the optimal step size for a given tolerance can be computed directly.
This is not the case for other error estimates
for the Krylov approximation, which usually employ
heuristic schemes to compute optimal step sizes in the restarting approach.
Also the use of a~priori bounds is not optimal in most cases.
Comparing \tjnew{our step size control} with heuristic versions shows that our procedure
allows larger step sizes \tjnew{in relevant cases} using reliable error estimates.
In addition to better performance,
we can avoid safety parameters and
assumptions on the convergence of an error expansion.
\tjnew{Therefore, our approach provides a reliable strategy} to find the optimal step size.
Numerical examples illustrate our theoretical results.
% and show that error estimates are sharp
%in the case of smaller Krylov subspace dimensions and tolerances.
%This leads to good performance of the error estimate $\Era$ in relevant cases. }
%For larger dimensions we observe reduction in the local order.
%We observed that for $t$ small enough, the local order is monotonically decreasing.

%This loss of local order will be the topic of further investigation
%and can possibly lead to enhanced defect-based error estimates.
The shift-and-invert method is also a relevant approach
which deserves further investigations.

\bigskip\noindent
{\textbf{Acknowledgements.}}
This work was supported by the Doctoral College TU-D,
Technische Universit{\"a}t Wien,
and by the Austrian Science Fund (FWF) under the grant P 30819-N32.

%\bibliography{books,appl,num,schroedinger}{}
%\bibliographystyle{plain}
%\bibliographystyle{chicago}
%\bibliographystyle{spbasic}      % basic style, author-year citations
%\bibliographystyle{spmpsci}      % mathematics and physical sciences
%\bibliographystyle{spphys}       % APS-like style for physics

\end{document}